\newcommand{\eps}{\varepsilon}
\newcommand{\N}{\mathbb N}
\newcommand{\R}{\mathbb R}
\newcommand{\E}{\mathbb E}
\renewcommand{\P}{\mathbb P}
\newcommand{\p}{\boldsymbol{\mathcal{P}}}
\newcommand{\hp}{\boldsymbol{\mathcal{P}}^h}
\newcommand{\argsh}{\mathrm{argsh}\,}
\theoremstyle{definition}
\newtheorem{thm}{Theorem}
\newtheorem{defn}{Definition}%[section]
\newtheorem{rem}[defn]{Remark}
\newtheorem{prop}[defn]{Proposition}
\newtheorem{corr}[defn]{Corollary}
\newtheorem{lem}[defn]{Lemma}
\tikzstyle{every node}=[circle, draw, fill=black!50, inner sep=0pt, minimum width=4pt]
\tikzstyle{rouge}=[circle, draw, fill=red, inner sep=0pt, minimum width=6pt]
\tikzstyle{bleu}=[circle, draw, fill=blue, inner sep=0pt, minimum width=6pt]
\tikzstyle{petitrouge}=[circle, draw, fill=red, inner sep=0pt, minimum width=4pt]
\tikzstyle{petitbleu}=[circle, draw, fill=blue, inner sep=0pt, minimum width=4pt]
\tikzstyle{texte}=[draw=none, fill=none]
\title{\bf{The Hyperbolic Brownian Plane}}
\author{Thomas \bsc{Budzinski} \footnote{ENS Paris and Université Paris-Saclay, \url{thomas.budzinski@ens.fr}}}
\date{}
\begin{document}

\maketitle

\begin{abstract} We introduce and study a new random surface, which we call the \emph{hyperbolic Brownian plane} and which is the near-critical scaling limit of the hyperbolic triangulations constructed in \cite{CurPSHIT}. The law of the hyperbolic Brownian plane is obtained after biasing the law of the Brownian plane \cite{CLGplane} by an explicit martingale depending on its perimeter and volume processes studied in \cite{CLGHull}. Although the hyperbolic Brownian plane has the same local properties as those of the Brownian plane, its large scale structure is much different since we prove e.g.~that is has exponential volume growth.
\end{abstract}

\section*{Introduction}

The construction and the study of random surfaces as scaling limits of random planar maps has been a very active field of research in the last years, see \cite{LeGallICM,MieStFlour} for survey. The first such random surface that was built is the Brownian map \cite{LG11,Mie11}, which is now known to be the scaling limit of a wide class of finite planar maps conditioned to be large \cite{Ab13,AA13,BLG13,BJM13,CLGmodif}. Curien \& Le Gall introduced the Brownian plane in \cite{CLGplane}, which can be seen as a non-compact version of the Brownian map. They showed that it is the scaling limit of the Uniform Infinite Planar Quadrangulation (UIPQ). They also conjectured it to be the scaling limit of several other random infinite lattices such as the Uniform Infinite Planar Triangulation (UIPT) of Angel \& Schramm \cite{AS03} (we verify this fact below for type-I triangulations). The goal of this paper is to introduce and to study a new random surface which we call \emph{the hyperbolic Brownian plane}. This surface is obtained as a near-critical scaling limit of the hyperbolic triangulations of \cite{CurPSHIT}.

\paragraph{The Brownian plane as the near-critical limit of the PSHT.} The spatial Markov property of random maps is a key feature of random lattices like the UIPT and UIPQ and has been used a lot in recent years to study their geometric structure, see e.g.~\cite{Ang03,BC16,CLGpeeling}. Recently, Angel \& Ray characterized all the triangulations of the half-plane enjoying a spatial Markov property and discovered a new family of triangulations of the half-plane having hyperbolic flavor \cite{AR13}. This has been extended to cover the case of the full-plane in \cite{CurPSHIT}. More precisely,  \cite{CurPSHIT} constructs a one-parameter family $( \mathbf{T}_{\kappa})_{0<\kappa \leq \kappa_c}$ of Markovian random triangulations of the plane, where the value $\kappa_{c}$ is equal to $\frac{2}{27}$. The triangulations $\mathbf{T}_{\kappa}$ for $\kappa<\kappa_c$ are called type-II Planar Stochastic Hyperbolic Triangulations (PSHT). At the critical value $\kappa = \kappa_{c}$, the random triangulation $ \mathbf{T}_{\kappa_{c}}$ is the UIPT of Angel \& Schramm, whereas $ \mathbf{T}_{\kappa}$ has hyperbolic features when $\kappa < \kappa_{c}$. Note that if $\kappa < \kappa_{c}$ is fixed, then it is impossible to rescale $ \mathbf{T}_{\kappa}$ to get a scaling limit in the Gromov--Hausdorff sense\footnote{we can find in the ball of radius $r$ of $ \mathbf{T}_{\kappa}$ a number of points at distance at least $\frac{r}{10}$ from each other that goes to $+\infty$ as $r \to +\infty$, so the sequence $\big( \frac{1}{r} B_r(\mathbf{T}_{\kappa}) \big)_{r \geq 1}$ is not tight for the Gromov--Hausdorff topology}. Hence, in order to get a proper scaling limit, it is necessary to let the parameter $\kappa \to  \kappa_{c}$ at the right speed as we renormalize the distances. If we let $\kappa \to \kappa_{c}$ too slow, then there is no scaling limit as above and if $\kappa \to \kappa_{c}$ too fast, then the scaling limit is just the Brownian plane: our approach is \textit{near-critical}.

Our main tool for proving such a convergence will be the absolute continuity relations between the hyperbolic triangulations and the UIPT. These relations allow us to deduce convergence results for hyperbolic maps from the analogous results for the UIPT. The above works \cite{AS03,CurPSHIT} deal with type-II triangulations where loops are forbidden. Unfortunately, as of today, no scaling limit result is available in the literature for type-II triangulations. This forces us to work with type-I triangulations (i.e. where loops are allowed), for which the convergence to the Brownian map has been established \cite{LG11}. Our first (easy) task is then to generalize the results of \cite{CurPSHIT} and to introduce the type-I PSHT, which we denote by $$\big( \mathbb{T}_\lambda \big)_{0 < \lambda \leq \lambda_{c}}, \quad  \mbox{where}\quad  \lambda_{c}= \frac{1}{12 \sqrt{3}}.$$ As above, in the critical case $\lambda= \lambda_{c}$, the random lattice $ \mathbb{T}_{\lambda_{c}}$ is just the type-I UIPT \cite{CLGmodif,St14}.

We denote by $\p$ the Brownian plane of \cite{CLGplane}. We recall that it is equipped with a volume measure $\mu_{\p}$. For $r \geq 0$, we write $B_r(\p)$ for the closed ball of radius $r$ centered at the origin point of $\p$. We also write $\overline{B_r}(\p)$ for the hull of radius $r$, that is, the union of $B_r(\p)$ together with all the bounded connected components of its complementary. We equip $\overline{B_r}(\p)$ with the induced metric (i.e. the restriction of the metric on $\p$), and the restriction of $\mu_{\p}$. We will therefore consider $\overline{B_r}(\p)$ as a measured metric space.
The last ingredients we need before stating our main theorem are the perimeter and volume processes of $\p$ introduced by Curien \& Le Gall \cite{CLGHull}. If $A \subset \p$, we will write $|A|=\mu_{\p}(A)$ for the measure of $A$. For $r>0$, the \textit{volume} of the hull of radius $r$ of $\p$ is $|\overline{B_r}(\p)|$. Moreover, Proposition 1.1 of \cite{CLGHull} states that the limit
\begin{equation}\label{perimeter_as_limit}
\lim_{\eps \to 0} \frac{1}{\eps^2} \left| B_{r+\eps} (\p) \backslash \overline{B_r}(\p) \right|
\end{equation}
is a.s well-defined and positive. We call it the \textit{perimeter} of $\overline{B_r}(\p)$ and denote it by $|\partial \overline{B_r}(\p)|$. Our main theorem is the following.

\begin{thm}[$\hp$ as a near-critical scaling limit of the PSHT]\label{mainthm} For $n \geq 0$, consider $ \mathbb{T}_{\lambda_{n}}$ the type-I planar stochastic hyperbolic triangulation of parameter $\lambda_{n} \to \lambda_{c}$ in such a way that  
\begin{equation} \label{lambdaspeed}
\lambda_n = \lambda_{c} \left( 1 - \frac{2}{3 n^4} \right)+ o \left( \frac{1}{n^4} \right).
\end{equation}
Then we have the following convergence for the local Gromov--Hausdorff--Prokhorov distance:
\[ \frac{1}{n}  \mathbb{T}_{\lambda_n} \xrightarrow[n\to\infty]{(d)} \hp,\]
where $\hp$ is a random locally compact metric space that we call the \textit{hyperbolic Brownian plane}. Its distribution is characterized by the fact that for every $r \geq 0$, the random measured metric space $\overline{B_r}(\hp)$ has density
\begin{equation}\label{densityph}
e^{-2|\overline{B_{2r}} (\p)|} e^{|\partial \overline{B_{2r}} (\p)|} \int_0^1 e^{-3 |\partial \overline{B_{2r}} (\p)| x^2}  \mathrm{d}x
\end{equation}
with respect to $\overline{B_r}(\p)$.
\end{thm}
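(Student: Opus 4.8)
The plan is to \emph{transfer}, via absolute continuity, the known convergence of the critical model (the type-I UIPT, which converges to $\p$) up to the hyperbolic models. Concretely I would proceed in four steps.

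\textbf{Step 1 (discrete absolute continuity).} Following \cite{CurPSHIT}, I would first construct the type-I PSHT $\mathbb T_\lambda$ for $0<\lambda\le\lambda_c=\tfrac1{12\sqrt3}$ as a Doob ($h$-)transform of the peeling process of the type-I UIPT $\mathbb T_{\lambda_c}$, using the explicit ``discrete harmonic'' function attached to $\lambda$; the type-I case requires only the routine combinatorial adjustments to \cite{CurPSHIT}. A consequence of this construction is that for every combinatorial radius $\rho\ge0$ the law of the hull $\overline B_\rho(\mathbb T_\lambda)$ is absolutely continuous with respect to that of $\overline B_\rho(\mathbb T_{\lambda_c})$, with a density $D_\rho^{\lambda}$ that is an explicit function of the perimeter $|\partial\overline B_\rho|$ and the volume $|\overline B_\rho|$ alone, and $(D_\rho^\lambda)_{\rho\ge0}$ is a mean-$1$ martingale for the filtration generated by the hulls. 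This is the discrete counterpart of \eqref{densityph}.

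\textbf{Step 2 (joint scaling limit at the critical point).} I would then record that $\tfrac1n\mathbb T_{\lambda_c}\to\p$ for the local Gromov--Hausdorff--Prokhorov distance — obtained from the convergence of large finite type-I triangulations to the Brownian map \cite{LG11} by the usual passage to the infinite-volume limit, exactly as in the quadrangulation case of \cite{CLGplane} — and, crucially, that this holds \emph{jointly} with the convergence of the rescaled perimeter and volume processes of the combinatorial hulls towards the processes $(|\partial\overline B_r(\p)|,|\overline B_r(\p)|)_{r\ge0}$ of \cite{CLGHull}. The joint statement follows by combining the metric convergence with the convergence of the peeling-by-layers process, which controls hull perimeters and volumes; the factor $2$ in \eqref{densityph} reflects the precise normalization relating a combinatorial hull of radius $\sim rn$ (perimeter and volume of order $n^2$ and $n^4$) to the parametrization of the hull process of $\p$ chosen in \cite{CLGHull}.

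\textbf{Step 3 (passing to the limit in the density).} Inserting the near-critical speed \eqref{lambdaspeed} into the explicit formula for $D_{\lfloor rn\rfloor}^{\lambda_n}$, a Laplace-type asymptotic analysis of the rescaled discrete harmonic function shows that $D_{\lfloor rn\rfloor}^{\lambda_n}$ converges in law, jointly with $\tfrac1n\mathbb T_{\lambda_c}$, to
$\mathcal D_r:=e^{-2|\overline B_{2r}(\p)|}\,e^{|\partial\overline B_{2r}(\p)|}\int_0^1 e^{-3|\partial\overline B_{2r}(\p)|x^2}\,\mathrm{d}x$,
the integral over $x$ being precisely what the rescaled harmonic function produces in the limit (after the $(\lambda_n/\lambda_c)^{|\overline B|}$ factor is matched with $e^{-2|\overline B_{2r}(\p)|}$). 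Together with a uniform integrability estimate for the martingales $\big(D_{\lfloor rn\rfloor}^{\lambda_n}\big)_n$ — which I expect to be the main technical obstacle, and which should again be drawn from the peeling estimates and the explicit form of the harmonic function — this upgrades the convergence of laws: for each fixed $r$, the law of $\tfrac1n\overline B_r(\mathbb T_{\lambda_n})$ converges to the law of $\overline B_r(\p)$ reweighted by $\mathcal D_r$, which we take as the \emph{definition} of $\overline B_r(\hp)$.

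\textbf{Step 4 (conclusion).} It remains to check that the reweightings defined by $(\mathcal D_r)_r$ are consistent in $r$: this is exactly the statement that $s\mapsto e^{-2|\overline B_s(\p)|}e^{|\partial\overline B_s(\p)|}\int_0^1 e^{-3|\partial\overline B_s(\p)|x^2}\,\mathrm{d}x$ is a martingale for the hull filtration of $\p$, which follows by pushing the discrete martingale property of Step 1 through the limit (using the joint convergence of Step 2 and the uniform integrability of Step 3). Kolmogorov's extension theorem then produces a single random locally compact measured metric space $\hp$ — local compactness being inherited from $\p$ since $\mathcal D_r$ is a.s.\ positive and finite — whose hull of radius $r$ has density $\mathcal D_r$ with respect to that of $\p$, i.e.\ \eqref{densityph}. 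The per-$r$ convergences of Step 3, being mutually compatible, assemble into $\tfrac1n\mathbb T_{\lambda_n}\xrightarrow{(d)}\hp$ for the local Gromov--Hausdorff--Prokhorov topology, which is the theorem.
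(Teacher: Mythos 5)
Your overall strategy — transferring the critical convergence to the hyperbolic model by absolute continuity, taking the near-critical limit of the explicit Radon--Nikodym density, and then upgrading from vague to weak convergence and assembling the per-$r$ limits into a single space — is indeed the paper's plan, and Steps 1 and 2 correspond to Sections 1 and 2.2--2.3 of the paper (the type-I PSHT with $C_p(\lambda)$, Proposition~\ref{convergenceBP}, Proposition~\ref{joint1}, and Theorem~\ref{joint}). However, there are two points where the proposal has a real gap or takes a route the paper deliberately avoids.

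First, your explanation of where the radius $2r$ comes from in~\eqref{densityph} is wrong, and this is not merely cosmetic. You attribute the ``factor $2$'' to the normalization between the combinatorial hull scale and the $\p$-hull process of \cite{CLGHull}; but the normalization affects only the multiplicative constants $3$ and $3/2$ in Theorem~\ref{joint}, not the doubling of the radius. The real reason is the distinction between the map hull $B_r^\bullet(\mathbb T)$ and the metric space $\overline{B_r}(\mathbb T)$ equipped with the \emph{induced} distance from $\mathbb T$: the latter is a measurable function of $B_{2r}^\bullet(\mathbb T)$ but \emph{not} of $B_r^\bullet(\mathbb T)$ (a geodesic between two boundary points of the hull of radius $r$ may leave it, but stays in the hull of radius $2r$). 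Consequently, in your Step 3 you must apply the absolute-continuity density of the map $B_{2rn}^\bullet$, not $B_{rn}^\bullet$; as written, ``inserting the near-critical speed into $D_{\lfloor rn\rfloor}^{\lambda_n}$'' would produce a limit depending on $(|\partial\overline B_r(\p)|,|\overline B_r(\p)|)$, which contradicts~\eqref{densityph}. This needs to be fixed before the asymptotic analysis of Lemma~\ref{lemcombi2}/Proposition~\ref{density} can be invoked.

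Second, you identify uniform integrability of the discrete densities as ``the main technical obstacle'' and propose to extract it from peeling estimates. The paper avoids this entirely: once Proposition~\ref{density} gives pointwise convergence of the densities, it computes $\mathbb E[\varphi(P_{2r},V_{2r})]=1$ directly from the explicit Laplace transforms of $(P_r,V_r)$ in \cite{CLGHull} (Lemma~\ref{mass}), and then a Scheff\'e-type two-sided Fatou argument (applied to $f$ and to $M-f$) upgrades vague convergence to weak convergence with no moment estimate on the discrete side. Your route is plausible in principle, but it is heavier than necessary and you would still implicitly need the mass-$1$ identity to conclude; the paper's shortcut is worth adopting. Similarly, your Step 4 consistency argument via the continuous martingale property would require care because the density $\mathcal D_r$ is $\mathcal F_{2r}$-measurable, not $\mathcal F_r$-measurable, so the martingale statement is shifted; the paper sidesteps this by applying the appendix Proposition~\ref{GHP} in the bipointed compact case to get $\overline{B_s}\big(\overline{B_r}^h\big)\overset{(d)}{=}\overline{B_s}^h$ directly from the discrete identity $\overline{B_s}\big(\tfrac1n\overline{B_{rn}}(\mathbb T_{\lambda_n})\big)=\tfrac1n\overline{B_{sn}}(\mathbb T_{\lambda_n})$.
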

The choice of the constant $\frac{2}{3}$ in \eqref{lambdaspeed} was made so that the expression \eqref{densityph} looks as simple as possible. Of course another choice woud have resulted in a scaling limit just obtained by dilating $\hp$. The fact that we need to bias $\overline{B_{r}}( \p)$ by a function of the perimeter and volume of the hull of radius $2r$ instead of $r$ may seem surprising. It is due to the fact that we equip $\overline{B_{r}}( \p)$ with the \emph{induced} distance. Hence, the distance between two points in the hull of radius $r$ of a map may depend on the part of the map that lies outside of this hull (but not outside of the hull of radius $2r$). In order to obtain a similar result with $| \partial \overline{B_r}(\p)|$ and $|\overline{B_r}(\p)|$, we would need to equip $\overline{B_{r}}( \p)$ with its \emph{intrinsic} distance instead of the \emph{induced} one (see Section 2.1 for more details about this distinction). We would also need to prove an analog of Proposition \ref{GHP} for the intrinsic distance, which we have not been able to do.
%However, note that the intrinsic distance on the hull makes sense because there is a geodesic from any point of the hull to the root, so there is always a continuous path of finite length between two points of the hull.

In the study of a one-parameter family of models exhibiting a critical behavior, it has become quite usual to study \emph{near-critical} (scaling) limits. By near-critical, we mean that the parameter converges to its critical value at the right speed as the distances in the graph or the mesh of the lattice are going to zero. Understanding the near-critical limit usually sheds some light on the critical model because of the existence of scaling relations between near-critical and critical exponents. See for example the works on near-critical percolation \cite{GPS13,Kes87,NW09}, on the Ising model \cite{DCGP14} or on the Erd\"os-R\'enyi random graph \cite{ABBG12,ABBGM13}. 

\paragraph{Techniques.} As said above, the idea of the proof of Theorem \ref{mainthm} is to use the absolute continuity relations between the hulls of the type-I UIPT and of the hyperbolic triangulations $ \mathbb{T}_{\lambda}$. Our main technical tool is a reinforcement of the convergence of the UIPT towards the Brownian plane.  In the result below, $| \overline{B}_{r}( \mathbb{T}_{\lambda_{c}})|$ and $| \partial \overline{B}_{r}( \mathbb{T}_{\lambda_{c}})|$ respectively stand for the volume (number of vertices) and perimeter of the hull of radius $r$ in the UIPT.

\begin{thm}[Extended convergence towards the Brownian plane] \label{joint}
We have the joint convergences
\[\Bigg( \frac{1}{n}  \mathbb{T}_{\lambda_{c}}, \Big(\frac{1}{n^4} |\overline{B_{rn}}( \mathbb{T}_{\lambda_{c}})| \Big)_{r \geq 0}, \Big( \frac{1}{n^2} |\partial \overline{B_{rn}}( \mathbb{T}_{\lambda_{c}})| \Big)_{r \geq 0} \Bigg) \xrightarrow[n\to\infty]{(d)} \Bigg( \p, \Big( 3 |\overline{B_r}(\p)| \Big)_{r \geq 0}, \Big( \frac{3}{2} |\partial \overline{B_r}(\p)| \Big)_{r \geq 0} \Bigg)\]
in the local Gromov--Hausdorff--Prokhorov sense for the first marginal and in the Skorokhod sense for the last two.
\end{thm}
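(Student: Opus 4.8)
\begin{idproof}
Each of the three convergences in the statement holds separately (or nearly so) by known results; the real content of the theorem is that they hold \emph{jointly}, with the limiting perimeter and volume processes being those of the hulls of the \emph{same} Brownian plane $\p$. My plan is: (i) record the marginals; (ii) pass to a subsequential limit by tightness; (iii) identify the limiting volume process; (iv) identify the limiting perimeter process, which is the delicate step.

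For (i): the convergence $\tfrac1n\mathbb{T}_{\lambda_{c}}\to\p$ for the local Gromov--Hausdorff--Prokhorov distance follows from the convergence of large random type-I triangulations to the Brownian map~\cite{LG11} via the infinite-volume argument of Curien \& Le Gall used to prove $\mathrm{UIPQ}\to\p$~\cite{CLGplane} (absolute continuity of balls of the UIPT with balls of large finite triangulations, plus re-rooting); this is carried out below. The Skorokhod-sense convergence of $\big(n^{-4}|\overline{B_{rn}}(\mathbb{T}_{\lambda_{c}})|,n^{-2}|\partial\overline{B_{rn}}(\mathbb{T}_{\lambda_{c}})|\big)_{r\ge0}$ towards a nondegenerate continuous process, together with the identification of its law as $\big(3|\overline{B_r}(\p)|,\tfrac32|\partial\overline{B_r}(\p)|\big)_{r\ge0}$, follows from the explicit peeling transition probabilities of the type-I UIPT (the type-I analogues of Krikun's formulas) by the analysis of~\cite{CLGHull}; the constants $3$ and $\tfrac32$ are forced by the asymptotics of those transitions (equivalently, by comparison with the UIPQ computation in~\cite{CLGHull}).

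For (ii)--(iii): tightness of the triple is immediate from tightness of the marginals, so along a subsequence the triple converges to some $\big(\mathbf M,(\mathcal V_r)_{r\ge0},(\mathcal L_r)_{r\ge0}\big)$ with $\mathbf M\overset{(d)}{=}\p$, $(\mathcal V_r)\overset{(d)}{=}(3|\overline{B_r}(\p)|)$ and $(\mathcal L_r)\overset{(d)}{=}(\tfrac32|\partial\overline{B_r}(\p)|)$; by Skorokhod's representation theorem we may assume all three convergences are almost sure, and it suffices to prove $\mathcal V_r=3|\overline{B_r}(\mathbf M)|$ and $\mathcal L_r=\tfrac32|\partial\overline{B_r}(\mathbf M)|$ for all $r$, a.s. For the volume, the GHP convergence yields convergence of the rescaled vertex-counting measures to the volume measure $\mu$ of $\mathbf M$; since $\mu(\partial\overline{B_r}(\p))=0$ for all $r$ a.s.\ (a property of $\p$ inherited by $\mathbf M$), the only obstruction to passing from $n^{-4}|\overline{B_{rn}}(\mathbb{T}_{\lambda_{c}})|$ to $\mu(\overline{B_r}(\mathbf M))$ is the instability of the ``take the hull'' operation under GHP convergence, which is ruled out by a quantitative no-bottleneck estimate -- no cycle of diameter $o(n)$ in $B_{rn}(\mathbb{T}_{\lambda_{c}})$ separates a volume $\gg n^4$ at distance $\approx rn$ from the root -- of the kind used to control hulls in~\cite{CLGplane,CLGHull}; it holds for a.e.\ $r$, hence for all $r$ a.s.\ since $(\mathcal L_r)$ has no fixed discontinuity.

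Step (iv) is the main obstacle, since the perimeter of a hull is \emph{not} a continuous functional of the measured metric space, so $\mathcal L_r$ cannot be identified by a naive continuity argument. I would handle it by realizing all the convergences through the single \emph{peeling-by-layers} exploration of $\mathbb{T}_{\lambda_{c}}$: its state after $rn$ layers is $\overline{B_{rn}}(\mathbb{T}_{\lambda_{c}})$, so $\p$, the perimeter process and the volume process arise simultaneously as limits of functionals of one Markov chain with explicit transitions. One then proves jointly that (a) this exploration, rescaled by $(n,n^2,n^4)$ in (radius, perimeter, volume), converges to the hull process of $\p$ of~\cite{CLGHull}, and (b) for each fixed $r$ the discrete hull $\overline{B_{rn}}(\mathbb{T}_{\lambda_{c}})$ with rescaled metric and measure converges GHP to $\overline{B_r}(\p)$ -- using a description of the conditional law of $\overline{B_{rn}}(\mathbb{T}_{\lambda_{c}})$ given its perimeter, the matching description of $\overline{B_r}(\p)$ in~\cite{CLGHull}, and the no-bottleneck estimate of step (iii) to match the metrics. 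In this picture $\mathcal V_r$, $\mathcal L_r$ and $\overline{B_r}(\mathbf M)$ are all read off the same limiting object, giving $\mathcal V_r=3|\overline{B_r}(\mathbf M)|$ and $\mathcal L_r=\tfrac32|\partial\overline{B_r}(\mathbf M)|$; the subsequential limit is thus the announced one, and since it does not depend on the subsequence, the full convergence follows. Relative to~\cite{CLGHull}, the only genuinely new ingredients are the type-I peeling transitions and the verification that their asymptotics produce the constants $3$ and $\tfrac32$.
\end{idproof}
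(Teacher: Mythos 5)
Your overall architecture (prove the marginals, pass to a subsequential limit, identify volume via GHP convergence of measures, then identify perimeter) matches the paper's, and your steps (i)--(iii) are essentially correct: the paper proves the first marginal by the same absolute-continuity coupling with finite triangulations (Proposition~\ref{coupling}), and what you call a ``quantitative no-bottleneck estimate'' is carried out in the paper as a clean soft topological lemma, Proposition~\ref{GHP}, which shows that under mild regularity hypotheses local GHP convergence implies GHP convergence of hulls (and hence of hull volumes). So far the two proofs are parallel.

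Step (iv), however, is where your proposal has a genuine gap. You correctly identify the perimeter as the delicate marginal, since $|\partial\overline{B_r}(\cdot)|$ is not a GHP-continuous functional. Your remedy is to re-realize all three convergences through a single peeling-by-layers exploration and then ``read off'' $\mathcal L_r$, $\mathcal V_r$ and $\overline{B_r}(\mathbf M)$ from one limiting object. But the metric convergence $\tfrac1n\mathbb{T}\to\p$ is established via bijections (Le Gall's scaling limit for finite type-I triangulations, transferred by absolute continuity), not via peeling; there is no off-the-shelf statement that the peeling-by-layers exploration converges \emph{jointly} with the metric in the sense you need, and building one would amount to a separate project. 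You also invoke ``a description of the conditional law of $\overline{B_r}(\p)$ given its perimeter'' compatible with GHP limits, which is not available in the form required. In short, your step (iv) names the goal but not the mechanism.

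The paper's mechanism is Lemma~\ref{eta}: for each $r$,
\[\lim_{\delta\to0}\,\delta^{-1}\lim_{\eps\to0}\eps^{3/4}\,N_r^{r+\delta}(V,\eps)=c\,P_r\quad\text{a.s.},\]
where $N_a^b(V,\eps)$ counts jumps of the volume process $V$ of size $\ge\eps$ in $[a,b]$. This shows that $P_r$ is an almost-sure measurable functional of the \emph{volume} process alone, using the description of $(P,V)$ via the Lamperti time-changed stable process and its subordinator of volume increments. Once you know this, the argument closes immediately: any subsequential limit $(\p,3V,\tfrac32\widetilde P)$ has $(\widetilde P,V)\overset{(d)}{=}(P,V)$, so $\widetilde P_r$ and $P_r$ are a.s.\ limits of the same deterministic functional of $V$, hence $\widetilde P=P$. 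This sidesteps entirely the need to couple the peeling exploration with the GHP convergence, and it is the key idea missing from your sketch. If you want to salvage your peeling-based plan, you would effectively have to reprove the GHP convergence by layer-peeling, which is a much larger undertaking than the theorem itself warrants.
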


The convergence of the first marginal has been established in the case of quadrangulations in \cite{CLGplane} and we extend the proof to cover our case. On the other hand, the joint convergence of the last two marginals follows from the work \cite{CLGpeeling} (both in the case of quadrangulations and type-I triangulations). But it is important in our Theorem \ref{joint} that those convergences hold jointly, which requires some additional work. The convergence of near-critical PSHT towards the hyperbolic Brownian plane then follows from Theorem \ref{joint} and a couple of asymptotic enumeration results gathered in Section 1.

\paragraph{Properties of $\hp$.}
We also establish some properties of the hyperbolic Brownian plane. Since the density \eqref{densityph} goes to $1$ as $r$ goes to $0$, the hyperbolic Brownian plane is "locally isometric" to the Brownian plane (and hence also to the Brownian map). More precisely, for all $\eps>0$, there is a $\delta>0$ and a coupling between $\p$ and $\hp$ such that, with probability at least $1-\eps$, they have the same ball of radius $\delta$ around the origin. We also prove that $\hp$ almost surely has Hausdorff dimension $4$ and is homeomorphic to the plane.

The Brownian map is known to be invariant under uniform re-rooting. This means that, if we resample its root uniformly according to its volume measure, the rooted metric space we obtain has the same distribution as the Brownian map. This property has played an important role in the proof of universality results in \cite{Ab13,AA13,BLG13,BJM13,LG11}, and in the axiomatic characterization of the Brownian map given by \cite{MS15}. Unfortunately, it makes no sense anymore when the volume measure is infinite. However, we prove the following property of $\hp$: for every measurable, nonnegative function $f$ we have
\[ \mathbb{E} \Big[ \int_{\hp} f(\hp, \rho, y) \mu_{\hp}(\mathrm{d}y) \Big] = \mathbb{E} \Big[ \int_{\hp} f(\hp, y, \rho) \mu_{\hp}(\mathrm{d} y) \Big], \]
where $\rho$ is the origin of $\hp$ and $\mu_{\hp}$ its volume measure. This property is a continuous analog of the discrete property of unimodularity, which is a natural substitute for infinite random graphs to invariance under uniform rerooting (see for example \cite{AL07} for the discrete case). More precisely, the two properties are equivalent for finite random graphs. Our result shows that the hyperbolic Brownian plane is a natural surface to look at even from the purely continuum point of view.

Also, since the hyperbolic Brownian plane is a biased version of the Brownian plane, it is possible to define the perimeter $P_r^h$ and the volume $V_r^h$ of its hull of radius $r$ as Curien and Le Gall did for the Brownian plane in \cite{CLGHull}. We identify the joint distribution of these two processes in a similar way as in \cite{CLGHull}. Let $Z^h$ be the subcritical continuous-state branching process with branching mechanism $$\psi(\lambda)=\sqrt{\frac{8}{3}} \lambda \sqrt{\lambda+3}, \qquad \lambda >0.$$ Moreover, for all $\delta>0$, let $\nu_{\delta}$ be the following measure on $\R^+$:
\[ \nu_{\delta}( \mathrm{d}x)= \frac{\delta^3 e^{2 \delta}}{1+2 \delta} \frac{e^{-\frac{\delta^2}{2x}-2x}}{\sqrt{2 \pi x^5}} \mathbbm{1}_{x>0} \, \mathrm{d}x. \]

\begin{thm}[Perimeter and volume processes of $\hp$] \label{identification}\
\vspace{-2mm}
\begin{itemize}
\item[1)]
The perimeter process $P^h$ of the hyperbolic Brownian plane has the same distribution as the time-reversal of $Z^h$, started from $+\infty$ at time $-\infty$, and conditioned to die at time $0$.
\vspace{-1mm}
\item[2)]
Conditionally on $P^h$, the process $V^h$ has the same distribution as the process
\[ \Big( \sum_{s_i \leq r} \xi_i^h \Big)_{r \geq 0} ,\]
where $(s_i)$ is a measurable enumeration of the jumps of $P^h$, the random variables $\xi^h_i$ are independent and $\xi^h_i$ has distribution $\nu_{|\Delta P_{s_{i}}^h|}$ for all $i$.
\end{itemize}
\end{thm}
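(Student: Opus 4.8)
The strategy is to read off the joint law of $(P^h,V^h)$ from the known law of the perimeter and volume processes $(P,V):=\big((|\partial\overline{B_r}(\p)|)_{r\geq0},(|\overline{B_r}(\p)|)_{r\geq0}\big)$ of the Brownian plane, using the absolute continuity provided by Theorem~\ref{mainthm}. Recall from \cite{CLGHull} that $P$ is the time-reversal of a continuous-state branching process with branching mechanism $\psi_0(\lambda)=\sqrt{8/3}\,\lambda^{3/2}$, started from $+\infty$ at time $-\infty$ and conditioned to become extinct at time $0$, and that, conditionally on $P$, one has $V_r=\sum_{s_i\leq r}\xi_i$, where $(s_i)$ enumerates the jumps of $P$, the $\xi_i$ are independent and $\xi_i\sim\nu^{(0)}_{|\Delta P_{s_i}|}$ with $\nu^{(0)}_\delta(\mathrm dx)=\frac{\delta^3}{\sqrt{2\pi x^5}}\,e^{-\delta^2/(2x)}\,\mathbbm 1_{x>0}\,\mathrm dx$; in particular $\int_0^\infty e^{-\lambda x}\,\nu^{(0)}_\delta(\mathrm dx)=(1+\delta\sqrt{2\lambda})\,e^{-\delta\sqrt{2\lambda}}$. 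Writing $\phi(p):=e^{p}\int_0^1 e^{-3px^2}\,\mathrm dx$, the characterisation \eqref{densityph} says that the law of $\overline{B_r}(\hp)$ is that of $\overline{B_r}(\p)$ biased by the random variable $\mathcal M_{2r}$, where $\mathcal M_u:=e^{-2|\overline{B_u}(\p)|}\,\phi(|\partial\overline{B_u}(\p)|)$; note that $(\mathcal M_u)_{u\geq0}$ is then automatically a martingale for the hull filtration $(\mathcal F_u)_{u\geq0}$ of $\p$.

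For the volume process I would condition everything on the $\sigma$-algebra $\sigma(P)$ generated by the whole perimeter process. Under $\p$ the factor $\phi(|\partial\overline{B_{2r}}(\p)|)$ is $\sigma(P)$-measurable and $e^{-2|\overline{B_{2r}}(\p)|}=\prod_{s_i\leq 2r}e^{-2\xi_i}$, so under $\hp$, conditionally on $P^h$, each contribution $\xi_i$ with $s_i\leq r$ is independently reweighted from $\nu^{(0)}_{\delta_i}$ to the probability measure proportional to $e^{-2x}\nu^{(0)}_{\delta_i}(\mathrm dx)$. A direct comparison of densities gives $e^{-2x}\nu^{(0)}_\delta(\mathrm dx)=(1+2\delta)e^{-2\delta}\,\nu_\delta(\mathrm dx)$, the normalising constant $(1+2\delta)e^{-2\delta}$ being the value at $\lambda=2$ of the Laplace transform recalled above. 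Since this conditional description does not depend on $r$, letting $r\uparrow\infty$ yields exactly the stated conditional law of $V^h$ given $P^h$.

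For the perimeter process I would project the martingale $\mathcal M$ onto the sub-filtration $\mathcal G_u:=\sigma\big((P_s)_{s\leq u}\big)$ generated by the perimeter alone: using $\mathbb E_\p[e^{-2|\overline{B_u}(\p)|}\mid\sigma(P)]=\prod_{s_i\leq u}(1+2\delta_i)e^{-2\delta_i}$, one gets a $\mathcal G$-martingale $\widetilde{\mathcal M}_u:=\mathbb E_\p[\mathcal M_u\mid\mathcal G_u]=\phi(P_u)\prod_{s_i\leq u}(1+2\delta_i)\,e^{-2\delta_i}$, hence the law of $(P^h_s)_{0\leq s\leq r}$ is that of $(P_s)_{0\leq s\leq r}$ biased by $\widetilde{\mathcal M}_r$, consistently in $r$. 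It then remains to identify the process ``$P$ biased by $\widetilde{\mathcal M}$'' with the time-reversal of $Z^h$ started from $+\infty$ and conditioned to die at time $0$. Using the description of $P$ recalled above and the duality between a conditioned CSBP and its time-reversal, this reduces to a change-of-measure identity between the laws of $\mathrm{CSBP}(\psi_0)$ and $\mathrm{CSBP}(\psi)$ conditioned to become extinct at time $0$: the factor $\prod(1+2\delta_i)e^{-2\delta_i}$ should account for the modification of the branching part while $\phi(P_r)$ should absorb the accompanying exponential compensator together with the $h$-function implementing the conditioning. Concretely I would verify this through the induced relation between the cumulant semigroups $u_t$ of $\psi$ and $\psi_0$ --- equivalently, by computing the one-dimensional marginal density of $P^h_r$ and matching it with the one predicted for the time-reversed $Z^h$.

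The \emph{delicate step is this last identification}. The volume part and all the bookkeeping around $\mathcal M$ and $\widetilde{\mathcal M}$ are routine, but matching the explicit biasing with the change of branching mechanism $\psi_0\rightsquigarrow\psi$ --- which is not affine, since $\psi(\lambda)-\psi_0(\lambda)=\sqrt{8/3}\,\lambda\big(\sqrt{\lambda+3}-\sqrt\lambda\big)$ --- requires carefully tracking how time-reversal and the ``conditioned to die at $0$'' $h$-transform interact with Girsanov-type changes of measure for continuous-state branching processes, as well as a nontrivial check that the numerical constants (in particular the $3$ appearing both in $\sqrt{\lambda+3}$ and in \eqref{densityph}) are consistent. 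An alternative route for the perimeter part, bypassing \cite{CLGHull}, is to take the scaling limit of the explicit hull-perimeter and hull-volume Markov chains of the near-critical PSHT $\mathbb T_{\lambda_n}$ --- whose transition probabilities follow from the enumeration results of Section~1 --- and to identify the limit directly, the convergence of $(P^h,V^h)$ to this limit then being a consequence of Theorems~\ref{mainthm} and \ref{joint}. I would present the biasing computation as the main line of argument and fall back on whichever of the two identifications of the perimeter process is technically lighter.
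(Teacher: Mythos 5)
Your overall strategy matches the paper's, and the volume half of your argument is correct: starting from Theorem~\ref{mainthm}, you integrate out the volume to get the density~\eqref{densityP} of $P^h$ with respect to $P$, then read off the conditional reweighting of the jump variables, which indeed transforms $\nu^{(0)}_\delta$ into $\nu_\delta$ with normalizing factor $(1+2\delta)e^{-2\delta}$. You also correctly reduce the perimeter half to showing that $P$ biased by $\widetilde{\mathcal M}_r=\phi(P_r)\prod_{s_i\le r}(1+2|\Delta P_{s_i}|)e^{-2|\Delta P_{s_i}|}$ is the time-reversal of $Z^h$ started from $+\infty$ and conditioned to die at $0$.

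However, you then explicitly leave that identification as a ``delicate step'' accompanied by a to-do list rather than a proof, and that is where essentially all of the theorem's content lies. The paper carries it out through several concrete moves that are missing here: it proves that $M_t=e^{-S_t}\prod_{t_i\le t}(1+2|\Delta S_{t_i}|)e^{-2|\Delta S_{t_i}|}$ is a martingale for the spectrally positive $\frac{3}{2}$-stable L\'evy process $S$, by establishing
\[
\mathbb{E}\Big[e^{-uS_t}\prod_{t_i\le t}(1+2|\Delta S_{t_i}|)e^{-2|\Delta S_{t_i}|}\Big]=\exp\Big(t\,\sqrt{\tfrac{8}{3}}\,\tfrac{u^2+u-2}{\sqrt{u+2}}\Big),
\]
whose exponent vanishes at $u=1$, so that tilting $S$ by $e^{S_0}M$ produces a L\'evy process $S^h$ with exponent $\sqrt{8/3}\,u\sqrt{u+3}$; it transfers this change of measure through the stopping time $\tau=\inf\{t:S^h_t=0\}$ and then the Lamperti transform, yielding that $Z^h$ from $x$ has density $e^x\prod(1+2|\Delta Z_{t_i}|)e^{-2|\Delta Z_{t_i}|}$ with respect to the critical CSBP $Z$ from $x$; it solves the cumulant ODE~\eqref{equadiff} to get $u^h_t(\lambda)=3/\sinh^2\big(\argsh\sqrt{3/\lambda}+\sqrt{2}\,t\big)$ and the explicit extinction-time density $\Phi^h_t(x)$ of~\eqref{Phitx}; it then shows that the conditioned $Z^h$ has density $e^x\,\frac{\Phi_\rho(x)}{\Phi^h_\rho(x)}\prod(1+2|\Delta Z_{t_i}|)e^{-2|\Delta Z_{t_i}|}$ with respect to the conditioned $Z$, matches this against the conditional form of~\eqref{densityP} given the endpoint (the endpoint factor being pinned down by normalization), and finally checks the one-dimensional Laplace transforms (Lemma~\ref{unedimY}), a calculation parallel to Lemma~\ref{mass}. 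The martingale lemma and the $\sinh$-semigroup computation are exactly where the constant $3$ in $\sqrt{\lambda+3}$ and the numerical consistency you flag are verified --- none of this is routine, and a Girsanov/CSBP heuristic does not substitute for it because the change $\psi_0\to\psi$ is not an Esscher tilt. Your fallback route via scaling limits of the discrete hull Markov chains is not what the paper does and would still require identifying the limiting generator, so it is unlikely to be lighter. As written, the perimeter half of your proposal is a correct reduction followed by an accurate description of what remains to be done, not a proof.
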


This allows us to compute the asymptotics of these processes as in the discrete case in \cite{CurPSHIT}.

\begin{corr}[Exponential growth]\label{asymptoticspv}
We have the convergences
\[ \frac{P_r^h}{e^{2 \sqrt{2}r}}  \xrightarrow[r\to +\infty]{a.s.}  \mathcal{E} \quad \mbox{ and }\quad  \frac{V_r^h}{P_r^h}  \xrightarrow[r\to +\infty]{a.s.} \frac{1}{4}, \]
where $ \mathcal{E}$ is an exponential variable of parameter $12$.
\end{corr}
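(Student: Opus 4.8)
The plan is to read the asymptotics off the explicit description of Theorem~\ref{identification}, mirroring the discrete argument of \cite{CurPSHIT}. Two elementary computations set the stage. First, $\psi'(0)=\lim_{\lambda\to0}\psi(\lambda)/\lambda=\sqrt{8/3}\cdot\sqrt3=2\sqrt2$, so linearising the flow $\partial_t u_t=-\psi(u_t)$ near $0$ gives $u_t\sim Ce^{-2\sqrt2\,t}$ as $t\to\infty$: $2\sqrt2$ is the Malthusian rate of the subcritical CSBP $Z^h$, whence the factor $e^{2\sqrt2\,r}$. Second, from $\int_0^\infty x^{-3/2}e^{-a/x-bx}\,\mathrm{d}x=\sqrt{\pi/a}\,e^{-2\sqrt{ab}}$ and its derivative in $a$ one gets
\[
\int_0^\infty x\,\nu_\delta(\mathrm{d}x)=\frac{\delta^2}{1+2\delta},\qquad\int_0^\infty x^2\,\nu_\delta(\mathrm{d}x)=\frac{\delta^3}{2(1+2\delta)},
\]
so the conditional mean $\tfrac{\delta^2}{1+2\delta}$ behaves like $\tfrac\delta2$ as $\delta\to\infty$ (the relevant regime, since the jumps of $P^h$ become large) and like $\delta^2$ as $\delta\to0$. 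It therefore suffices to prove the two almost sure convergences $e^{-2\sqrt2\,r}P_r^h\to\mathcal E$ with $\mathcal E$ exponential of parameter $12$, and $e^{-2\sqrt2\,r}V_r^h\to\tfrac14\mathcal E$ (with the \emph{same} random variable $\mathcal E$).

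For the perimeter I would work with the time-reversed process of Theorem~\ref{identification}: $P^h$ is the reversal of the subcritical CSBP $Z^h$ coming down from $+\infty$ and conditioned to die at time $0$. One first shows that $e^{-2\sqrt2\,r}P_r^h$ converges almost surely to some $\mathcal E\ge0$ — this is the soft part, obtained from the branching structure, e.g.\ by checking that $e^{-2\sqrt2\,r}P_r^h$ is, after undoing the conditioning $h$-transform, a nonnegative reverse martingale for the filtration generated by the restriction of $P^h$ to $[r,\infty)$. Then $\mathcal E>0$ a.s.\ and its law follow by computing $\mathbb E\big[\exp(-\theta\,e^{-2\sqrt2\,r}P_r^h)\big]$ from the semigroup $u_t$ of $Z^h$ and the kernel encoding the conditioning to die at $0$, and letting $r\to\infty$: integrating the resulting ODE gives $\mathbb E[e^{-\theta\mathcal E}]=\tfrac{12}{12+\theta}$, i.e.\ $\mathcal E$ is exponential of parameter $12$.

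For the volume, condition on $P^h$. By Theorem~\ref{identification}, $V_r^h=\sum_{s_i\le r}\xi_i^h$ with the $\xi_i^h$ conditionally independent, $\mathbb E[\xi_i^h\mid P^h]=\frac{|\Delta P_{s_i}^h|^2}{1+2|\Delta P_{s_i}^h|}$ and $\mathbb E[(\xi_i^h)^2\mid P^h]=\frac{|\Delta P_{s_i}^h|^3}{2(1+2|\Delta P_{s_i}^h|)}$. The first step is to identify the limit of the conditional mean:
\[
e^{-2\sqrt2\,r}\,\mathbb E[V_r^h\mid P^h]=e^{-2\sqrt2\,r}\sum_{s_i\le r}\frac{|\Delta P_{s_i}^h|^2}{1+2|\Delta P_{s_i}^h|}\;\xrightarrow[r\to\infty]{a.s.}\;\tfrac14\,\mathcal E ;
\]
this is a law of large numbers for an additive functional of the jumps of $Z^h$, whose intensity is $P_u^h\,\mu(\mathrm{d}\ell)\,\mathrm{d}u$ up to a conditioning correction tending to $1$ at large radius, $\mu$ being the Lévy measure of $\psi$ (it has a light tail at $+\infty$ since $\psi$ is analytic near $0$). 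Since $\int_0^r P_u^h\,\mathrm{d}u\sim P_r^h/(2\sqrt2)$ by the perimeter asymptotics, the limit equals $\frac{\mathcal E}{2\sqrt2}\int_0^\infty\frac{\ell^2}{1+2\ell}\,\mu(\mathrm{d}\ell)$, and the point is that this integral is finite and equals $\frac1{\sqrt2}$: using $\frac1{1+2\ell}=\int_0^\infty e^{-s}e^{-2s\ell}\,\mathrm{d}s$ and $\int_0^\infty\ell^2 e^{-u\ell}\,\mu(\mathrm{d}\ell)=\psi''(u)$ (as $\psi$ has no Brownian part), it reduces to $\int_0^\infty e^{-s}\psi''(2s)\,\mathrm{d}s$, which one computes. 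The second step is concentration: $\mathrm{Var}(V_r^h\mid P^h)\le\sum_{s_i\le r}\frac{|\Delta P_{s_i}^h|^3}{2(1+2|\Delta P_{s_i}^h|)}=o\big((P_r^h)^2\big)$ almost surely (the largest jump of $P^h$ up to radius $r$ grows only linearly in $r$), so Chebyshev's inequality along a geometric sequence of radii, Borel--Cantelli, and the monotonicity of $r\mapsto V_r^h$ upgrade the convergence of the conditional means to $e^{-2\sqrt2\,r}V_r^h\to\tfrac14\mathcal E$ almost surely; dividing by $e^{-2\sqrt2\,r}P_r^h\to\mathcal E>0$ gives $V_r^h/P_r^h\to\tfrac14$.

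The almost sure convergences themselves, and the fact that the two limits are deterministic multiples of a single random variable, are the soft parts; the main obstacle is pinning down the two constants. Getting $12$ requires carefully carrying the double limit — entrance from $+\infty$ and conditioning to die at $0$ — through the Laplace transform and integrating the flow equation $\partial_t u_t=-\psi(u_t)$, in the spirit of the analogous identification for the Brownian plane in \cite{CLGHull}; getting $\frac14$ rather than merely "a positive constant" requires the exact evaluation of $\int_0^\infty\frac{\ell^2}{1+2\ell}\,\mu(\mathrm{d}\ell)$, for which the explicit form of $\psi$ and the moments of $\nu_\delta$ above are exactly what is needed.
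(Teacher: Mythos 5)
Your plan gets to the right answer, but the implementation route differs from the paper's in a way worth spelling out. The paper does \emph{not} argue directly on the conditioned time-reversed CSBP as you propose; instead it applies the Lamperti transform and Theorem~VII.18 of Bertoin to rewrite $(P^h_r,V^h_r)_{r\geq 0}$ as $(\reflectbox{$S$}^+_{\eta_t},L^+_{\eta_t})_{t\geq 0}$, where $\reflectbox{$S$}^+$ is a spectrally negative L\'evy process with Laplace exponent $\psi^h$ conditioned to stay positive, $L^+$ is the associated subordinator of $\nu_{|\Delta|}$-marks on its jumps, and $\eta_t$ is the Lamperti time change. The key advantage is that the conditioning to stay positive becomes innocuous on $[1,\infty)$: $\mathbb{E}[\reflectbox{$S$}_1]=(\psi^h)'(0)=2\sqrt2$, a moderate-deviations estimate gives $\reflectbox{$S$}^+_t/t = 2\sqrt2 + O(t^{-1/4})$ a.s.\ (the polynomial rate is used to integrate $1/\reflectbox{$S$}^+_u$ against $\mathrm{d}u$ and obtain the random shift $X_\infty$), and the ordinary law of large numbers for the subordinator gives $L^+_t/t\to c=\mathbb{E}[L_1]$. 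This bypasses exactly the two points you flag as delicate: the ``reverse martingale after undoing the $h$-transform'' and the jump intensity of $P^h$ with its conditioning correction. Those are not wrong ideas, but they require a real argument (the $h$-transform for ``come down from $+\infty$ at $-\infty$ and die at $0$'' touches both a Doob transform and an entrance law), whereas the Lamperti picture folds all of this into the conditioning of a L\'evy process to stay positive, which has a well-understood absolute continuity structure.

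For the constants, the paper again avoids your computation and instead computes $\mathbb{E}[e^{-\lambda P_r^h-\mu V_r^h}]$ in closed form from Lemma~\ref{biasedtwoprocesses} and the Laplace transforms of $(P_r,V_r)$ in \cite{CLGHull}, then sends $r\to\infty$ after the rescaling; the limit $(1+\tfrac\lambda{12}+\tfrac\mu{48})^{-1}$ identifies $\mathcal E$ and $c$ simultaneously, with no need to evaluate any integral of the jump measure. Your evaluation of $c=\int_0^\infty\tfrac{\ell^2}{1+2\ell}\mu^h(\mathrm{d}\ell)$ via $\tfrac1{1+2\ell}=\int_0^\infty e^{-s}e^{-2s\ell}\mathrm{d}s$ and $(\psi^h)''(u)=\int\ell^2 e^{-u\ell}\mu^h(\mathrm{d}\ell)$ is correct and a clean shortcut: it gives $\int_0^\infty e^{-s}(\psi^h)''(2s)\,\mathrm{d}s=\tfrac1{\sqrt2}$, hence $c/(2\sqrt2)=\tfrac14$, matching the paper's limit. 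Do note, though, that this forces $\mu^h(\mathrm{d}x)=(1+2x)e^{-3x}\mu(\mathrm{d}x)$, not $(1+2x)e^{-2x}\mu(\mathrm{d}x)$: differentiating $\psi^h(u)=\psi(u+1)$ with $\psi(u)=\int(e^{-ux}f(x)-1+ux)\mu(\mathrm{d}x)$ gives $(\psi^h)''(u)=\int x^2 e^{-ux}\,e^{-x}f(x)\mu(\mathrm{d}x)$, so the exponent in the paper's display \eqref{muh} is a typo ($-3x$, not $-2x$); this does not affect the paper's proof since it never uses that formula to get $c$, but it would affect yours if you plugged it in directly rather than going through $\psi''$. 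Finally, the one genuine gap in your write-up is the concentration step for $V^h$: the conditional variance bound by $\sum\mathbb{E}[(\xi_i^h)^2\mid P^h]$ is fine, but making ``$o((P_r^h)^2)$ a.s.'' rigorous requires controlling the largest jump of $P^h$ up to radius $r$, and it is not obvious that this grows only linearly in $r$ without first establishing the asymptotics you are trying to prove. The paper's route avoids this entirely because, after Lamperti, $L$ is a genuine subordinator and the law of large numbers suffices.
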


The structure of the paper is as follows. In Section 1 we introduce the type-I analog of the PSHT, and show they are the only type-I triangulations enjoying a similar domain Markov property as that defined by Curien in \cite{CurPSHIT}. We also gather a few enumeration results. Section 2 is devoted to the proof of Theorem \ref{mainthm} and \ref{joint} and Section 3 to the study of the perimeter and volume processes. Appendix A contains a technical result about the Gromov--Hausdorff--Prokhorov convergence. It shows that under some technical assumptions, if a sequence $(X_n)$ of metric spaces converges to $X$, then the hulls $\overline{B_r} (X_n)$ converge to $\overline{B_r}(X)$.

\paragraph{Acknowledgments:} I thank Nicolas Curien for suggesting me to study this object, and for carefully reading many earlier versions of this manuscript. I also thank the anonymous referee for his useful comments. I acknowledge the support of ANR Liouville (ANR-15-CE40-0013) and ANR GRAAL (ANR-14-CE25-0014).

\tableofcontents

\section{Prerequisites: enumeration and type-I PSHT}

\subsection{Combinatorial preliminaries}

A \textit{type-I triangulation of a $p$-gon} is a planar map equipped with a distinguished oriented edge called the \textit{root}, in which the face to the right of the root has a simple boundary of length $p$ and every other face has degree $3$. It may contain multiple edges and self-loops. In this whole work we will make repeated use of the results of Krikun \cite{Kri07} about the enumeration of type-I triangulations. For $p \geq 1$ and $n \geq 0$, we write $\mathscr{T}_{n,p}$ for the set of type-I triangulations of a $p$-gon with $n$ inner vertices, and $\# \mathscr{T}_{n,p}$ for its cardinal. By Euler's formula, a triangulation of a $p$-gon with $n$ inner vertices has $3n+2p-3$ edges. Hence, the main theorem of \cite{Kri07} in the case $r=0$ (that is, triangulations with only one hole) can be rewritten
\begin{equation}\label{enumeration}
\# \mathscr{T}_{n,p}=\frac{p(2p)!}{(p!)^2} \frac{4^{n-1} (2p+3n-5)!!}{n! (2p+n-1)!!} \underset{n \to +\infty}{\sim} C(p) \lambda_c^{-n} n^{-5/2},
\end{equation}
where we recall that $\lambda_c=\frac{1}{12 \sqrt{3}}$, and where
\begin{equation} \label{enumasymp}
C(p) = \frac{3^{p-2} p (2p)!}{4 \sqrt{2 \pi} (p!)^2} \underset{p \to +\infty}{\sim} \frac{1}{36 \pi \sqrt{2}} 12^p  \sqrt{p}.
\end{equation}

A triangulation of the sphere with $n$ vertices can be seen after a root transformation as a triangulation of a $1$-gon with $n-1$ inner vertices (see Figure 2 in \cite{CLGmodif}). Hence, the number of triangulations of the sphere with $n$ vertices is
\begin{equation}\label{asympsphere}
\# \mathscr{T}_{n-1,1} \underset{n \to +\infty}{\sim} \frac{1}{72 \sqrt{6\pi}} \lambda_c^{-n} n^{-5/2}.
\end{equation}
We also write $Z_p(\lambda)=\sum_{n \geq 0} \# \mathscr{T}_{n,p} \lambda^n$. Note that, by the asymptotics \eqref{enumeration}, we have $Z_p(\lambda)<+\infty$ iff $\lambda \leq \lambda_c$. We finally write $ G_{\lambda}(x)=\sum_{p \geq 1} Z_p(\lambda) x^p$. Formula (4) of \cite{Kri07} computes $G_{\lambda}$ after a simple change of variables:
\begin{equation}\label{G}
G_{\lambda}(x) = \frac{\lambda}{2} \bigg( \Big(1-\frac{1+8h}{h}x \Big) \sqrt{1-4(1+8h)x} -1+\frac{x}{\lambda}\bigg),
\end{equation}
where $h \in \big( 0, \frac{1}{4} \big]$ is such that 
\begin{equation}\label{h-lambda}
\lambda=\frac{h}{(1+8h)^{3/2}}.
\end{equation}
Note that our $h$ corresponds to the $h^3$ of \cite{Kri07}. From \eqref{G} we easily get
\begin{equation}\label{Z_1}
Z_1(\lambda)=\frac{1}{2}-\frac{1+2h}{2\sqrt{1+8h}}
\end{equation}
and, for $p \geq 2$,
\begin{equation}\label{Z_p}
Z_p(\lambda)=( 2+16h)^{p}\frac{(2p-5)!!}{p!} \frac{(1-4h)p+6h}{4 (1+8h)^{3/2} }.
\end{equation}

We now prove a combinatorial estimate that we will use later in the proof of the convergence of the type-I UIPT to the Brownian plane.

\begin{lem}\label{lemcombi1}
When $n,p \to +\infty$ with $p=O \big( \sqrt{n} \big)$, we have
\[\# \mathscr{T}_{n,p} \sim \frac{1}{36 \pi \sqrt{2}} \lambda_c^{-n} n^{-5/2} 12^p \sqrt{p} \, \exp \Big(-\frac{2p^2}{3n} \Big).\]
\end{lem}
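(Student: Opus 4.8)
The plan is to start from the exact product formula in \eqref{enumeration},
\[
\# \mathscr{T}_{n,p}=\frac{p(2p)!}{(p!)^2}\cdot \frac{4^{n-1}\,(2p+3n-5)!!}{n!\,(2p+n-1)!!},
\]
and run a careful Stirling expansion; the only delicate ingredient is the ratio of double factorials $\frac{(2p+3n-5)!!}{(2p+n-1)!!}$ in the regime $p=O(\sqrt n)$. I would set $a=2p+3n-5$ and $b=2p+n-1$, so that $a-b=2n-4$. The first point is that $a$ and $b$ always have the same parity (both have the parity of $n-1$). Hence, writing each double factorial in terms of ordinary factorials via $(2k)!!=2^k k!$ and $(2k-1)!!=(2k)!/(2^k k!)$ — separating the cases $n$ even and $n$ odd — and invoking Stirling with explicit remainder $m!=\sqrt{2\pi m}\,(m/e)^m(1+O(1/m))$, the parity-dependent constants cancel and one obtains, uniformly for $p=O(\sqrt n)$,
\[
\log \frac{a!!}{b!!}=\frac a2\log a-\frac b2\log b-\frac{a-b}{2}+\frac12\log\frac ab+o(1).
\]

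The heart of the argument is to extract the factor $e^{-2p^2/(3n)}$ from the right-hand side. Writing $a=3n(1+u)$ and $b=n(1+v)$ with $u=\frac{2p-5}{3n}$ and $v=\frac{2p-1}{n}$, both of size $O(n^{-1/2})$, one expands $(1+t)\log(1+t)=t+\tfrac12 t^2+O(t^3)$ for $t=u$ and $t=v$. The first-order terms produce the expected leading behaviour; the key observation is that the second-order terms do \emph{not} cancel, contributing
\[
\frac{(2p-5)^2}{12n}-\frac{(2p-1)^2}{4n}=-\frac{2p^2}{3n}+O\!\left(\frac pn\right)
\]
to the exponent, where $O(p/n)=o(1)$ precisely because $p=O(\sqrt n)$; all remaining errors (terms of size $n u^3$, the difference $\log\frac ab-\log 3$, etc.) are likewise $o(1)$ in this regime. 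Collecting the powers of $3$ and converting the leftover $n^{n-2}e^{-(n-2)}$ back through Stirling, one arrives at
\[
\frac{(2p+3n-5)!!}{(2p+n-1)!!}\sim 3^{3n/2}\,3^{\,p-2}\,\frac{n!}{\sqrt{2\pi}\,n^{5/2}}\,e^{-2p^2/(3n)}.
\]

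Substituting this into the exact formula, the $n!$ cancels, and $4^{n-1}3^{3n/2}=\tfrac14(12\sqrt3)^n=\tfrac14\lambda_c^{-n}$ yields
\[
\# \mathscr{T}_{n,p}\sim \frac{3^{\,p-2}\,p\,(2p)!}{4\sqrt{2\pi}\,(p!)^2}\,\lambda_c^{-n}\,n^{-5/2}\,e^{-2p^2/(3n)}=C(p)\,\lambda_c^{-n}\,n^{-5/2}\,e^{-2p^2/(3n)},
\]
with $C(p)$ as in \eqref{enumeration} (note that for bounded $p$ this already recovers \eqref{enumeration} itself). The claimed estimate then follows upon inserting the asymptotics $C(p)\sim\frac{1}{36\pi\sqrt2}\,12^p\sqrt p$ from \eqref{enumasymp}, which is legitimate since $p\to\infty$. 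I expect the only real difficulty to be the bookkeeping: one must make sure the term $-2p^2/(3n)$ is retained \emph{exactly} while every other contribution is genuinely $o(1)$ uniformly over $p=O(\sqrt n)$, and one must keep careful track of parities when passing from double factorials to factorials — both routine but easy to slip on.
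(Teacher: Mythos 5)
Your proof is correct and takes the same approach as the paper: expand the exact product formula \eqref{enumeration} via Stirling's formula, keeping the second-order term to extract the factor $e^{-2p^2/(3n)}$. The paper simply says ``develop \eqref{enumeration} asymptotically using Stirling'' and defers details to the type-II analogue in \cite{CLGpeeling}; you have spelled out precisely those details, including the parity bookkeeping for the double factorials, and the computation checks out.
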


\begin{proof}
This follows from developping \eqref{enumeration} asymptotically using the Stirling formula. The same estimate for type-II triangulations can be found in the proof of Proposition 8 of \cite{CLGpeeling}. Only some constants differ, and these constants for type-I triangulations are given in Section 6.1 of \cite{CLGpeeling}. We omit the details here.
\end{proof}

\subsection{Definition of the type-I PSHT}

The goal of this section is to construct the analog of the hyperbolic triangulations of \cite{CurPSHIT} in the case of type-I triangulations. Since the construction is roughly the same, we only stress the differences. If $t$ is a finite, rooted triangulation with a simple hole of perimeter $p$, we write $|t|$ for its number of vertices. By $t \subset T$, we mean that $T$ may be obtained by filling the hole of $t$ with an infinite triangulation of perimeter $p$.

\begin{defn}\label{defMarkov}
Let $\lambda>0$. A random (rooted) infinite type-I triangulation of the plane $T$ is $\lambda$-\emph{Markovian} if there are constants $\big( C_p(\lambda) \big)_{p \geq 1}$ such that, for all finite rooted triangulations $t$ with a hole of perimeter $p$, we have
\[\P \big( t \subset T \big)=C_p(\lambda) \lambda^{|t|}.\]
\end{defn}

\begin{rem}
Like Curien \cite{CurPSHIT}, we choose a stronger definition of the Markov property than that of Angel $\&$ Ray \cite{AR13}. Although these two definitions should coincide for type-II triangulations, this is important in the context of type-I triangulations. Indeed, a weaker definition would allow a much larger class of Markovian triangulations (see \cite{AR13}, Section 3.4 for a precise discussion in the half-planar case).
\end{rem}

\begin{prop}
If $\lambda > \lambda_c$, there is no $\lambda$-Markovian type-I triangulation.
If $\lambda \leq \lambda_c$, there is a unique one (in distribution). Besides we have
\begin{equation}\label{Cformula}
C_p(\lambda)=\frac{1}{\lambda} \Big( 8+\frac{1}{h}\Big)^{p-1} \sum_{q=0}^{p-1} \binom{2q}{q} h^{q},
\end{equation}
where $h$ is like in \eqref{h-lambda}. We will write $\mathbb{T}_{\lambda}$ for this triangulation and $\mathbb{T}=\mathbb{T}_{\lambda_c}$, which coincides with the type-I UIPT \cite{CLGmodif,St14}.
\end{prop}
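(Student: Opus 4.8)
The plan is to follow the strategy of \cite{CurPSHIT}, transferred from type-II to type-I triangulations; the only genuinely new feature is that loops, hence holes of perimeter $1$, must now be taken into account in the peeling. Suppose first that $T$ is a $\lambda$-Markovian triangulation with constants $\big(C_p(\lambda)\big)_{p\ge1}$. The first step is a \emph{peeling equation}. Fix a finite triangulation $t$ with $C_p(\lambda)>0$ having a hole of perimeter $p$ whose boundary contains the root edge $e$; condition on $\{t\subset T\}$ and reveal the triangle of $T$ glued onto $e$ inside the hole together with the finite region it may cut off from infinity (one-endedness of $T$ ensures there is exactly one such region). By the domain Markov property each outcome is a refinement $t'\supset t$ of conditional probability $C_{p'}(\lambda)\lambda^{m}/C_p(\lambda)$, where $p'$ is the perimeter of the remaining infinite hole and $m$ the number of swallowed vertices. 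Classifying the outcomes — the apex of the revealed triangle is a fresh vertex ($p'=p+1$); it is a boundary vertex other than the endpoints of $e$, cutting off a simple $q$-gon filled by an arbitrary element of $\mathscr{T}_{m,q}$ with $2\le q\le p-1$; or the revealed triangle carries a loop at an endpoint of $e$, cutting off a region of perimeter $1$ or $p$ — and requiring all the conditional probabilities to sum to $1$, using $\sum_{m\ge0}\#\mathscr{T}_{m,q}\lambda^m=Z_q(\lambda)$, one obtains
\[
C_p(\lambda)=\lambda\,C_{p+1}(\lambda)+2\sum_{q=1}^{p}Z_q(\lambda)\,C_{p+1-q}(\lambda),\qquad p\ge1,
\]
in which the $q=1$ and $q=p$ terms are exactly the loop contributions (in the type-II case $q$ would range only over $2,\dots,p-1$). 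A direct peeling computation at the root of $T$ furthermore fixes the overall scale, namely $C_1(\lambda)=1/\lambda$.

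Every term in this identity is a probability, so all the $Z_q(\lambda)$ are finite, which by \eqref{enumeration} is impossible when $\lambda>\lambda_c$; this yields the non-existence statement (one first checks, again by peeling from the root, that $C_p(\lambda)>0$ for all $p$, so the bound applies to every $q$). When $\lambda\le\lambda_c$ the recursion expresses each $C_{p+1}(\lambda)$ in terms of $C_1(\lambda),\dots,C_p(\lambda)$, and combined with $C_1(\lambda)=1/\lambda$ it determines the entire sequence; in particular any two $\lambda$-Markovian triangulations have identical cylinder probabilities $C_p(\lambda)\lambda^{|t|}$, hence the same law. To extract the closed form, I would encode the recursion in $\mathfrak{C}(x)=\sum_{p\ge1}C_p(\lambda)x^p$: using $\sum_{q\ge0}Z_q(\lambda)x^q=G_\lambda(x)$ (with $Z_0(\lambda)=G_\lambda(0)=0$) it collapses to $\mathfrak{C}(x)\big(x-\lambda-2G_\lambda(x)\big)=-x$, i.e.\ $\mathfrak{C}(x)=x/(\lambda+2G_\lambda(x)-x)$, and then substitute \eqref{G} and read off the coefficients to get \eqref{Cformula} (the truncated sum $\sum_{q=0}^{p-1}\binom{2q}{q}h^q$ arising from the expansion of the square root in $G_\lambda$).

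For the converse one must show that, for $\lambda\le\lambda_c$, the numbers \eqref{Cformula} really come from a random triangulation. I would construct $\mathbb{T}_\lambda$ by its peeling process: explore edge by edge, and whenever the current explored region has a hole of perimeter $p$, perform the fresh-apex move with probability $\lambda C_{p+1}(\lambda)/C_p(\lambda)$, a move swallowing a $q$-gon with probability proportional to $Z_q(\lambda)C_{p+1-q}(\lambda)/C_p(\lambda)$ (including the loop moves $q=1$ and $q=p$), and fill each swallowed region by an independent triangulation weighted by $\lambda^{(\#\text{vertices})}$. One must check that these weights are nonnegative — this is where $h\in(0,\tfrac14]$, i.e.\ $\lambda\le\lambda_c$, enters — and that they sum to $1$, which is precisely the functional identity above; then a routine argument shows the process a.s.\ never gets stuck and exhausts a one-ended triangulation of the plane, which by construction is $\lambda$-Markovian with constants \eqref{Cformula}. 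Finally, at $\lambda=\lambda_c$ one has $h=\tfrac14$ and \eqref{Cformula} reduces to the sequence of constants of the type-I UIPT, so $\mathbb{T}_{\lambda_c}$ is the UIPT of \cite{CLGmodif,St14}.

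The main obstacle, I expect, is the verification that \eqref{Cformula} solves the peeling recursion together with the nonnegativity of the induced peeling weights: this is the single computation that pins down the constants, delimits the range $\lambda\le\lambda_c$, and makes the existence construction work — essentially the computation of \cite{CurPSHIT} in the type-II case, here made more delicate by the loop configurations. By comparison, the finiteness argument forcing $\lambda\le\lambda_c$ and the Kolmogorov-type consistency needed to promote the peeling transitions to a genuine probability measure on triangulations are routine.
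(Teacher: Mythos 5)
Your proposal is correct and follows essentially the same route as the paper: you derive the type-I peeling recursion with the extra loop contributions at $q=1$ and $q=p$, translate it into a generating-function identity for $\mathfrak{C}=F_\lambda$, substitute Krikun's formula \eqref{G} to extract \eqref{Cformula}, pin down $C_1(\lambda)=1/\lambda$ by a boundary observation (the paper does this via the trivial loop map $t_0$ with $\P(t_0\subset T)=1$; you substitute it into the generating function a step earlier, a cosmetic reordering), deduce non-existence for $\lambda>\lambda_c$ from finiteness of the $Z_q$ appearing in the recursion, and construct existence by interpreting the ratios $\lambda C_{p+1}/C_p$ and $Z_q C_{p+1-q}/C_p$ as peeling transition probabilities.
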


\begin{proof}
The uniqueness can be proved along the same lines as in Section $1$ of \cite{CurPSHIT}. The analog of relation $(5)$ in \cite{CurPSHIT} is, for all $p \geq 1$,
\begin{equation} \label{recrel}
C_p(\lambda)=\lambda C_{p+1}(\lambda) +2 \sum_{i=0}^{p-1} C_{p-i}(\lambda) Z_{i+1}(\lambda).
\end{equation}
Note that in our case, the sum starts at $0$ and ends at $p-1$ (instead of $1$ and $p-2$ in \cite{CurPSHIT}) because of the possible presence of loops. Hence, the $\lambda \leq \lambda_c$ condition comes from the fact that the radius of convergence of $Z_p$ is $\lambda_c$ by \eqref{enumeration}. If we write $F_{\lambda}(x)=\sum_{p \geq 0} C_p(\lambda) x^p$, then \eqref{recrel} becomes 
\[F_{\lambda}(x)=\frac{\lambda}{x} \Big( F_{\lambda}(x)-C_1(\lambda)x \Big)+\frac{2}{x} G_{\lambda}(x) F_{\lambda}(x),\]
so
\begin{equation}\label{FandG}
F_{\lambda}(x)=\frac{\lambda C_1(\lambda) x}{\lambda-x+2G_{\lambda}(x)}.
\end{equation}
By combining \eqref{FandG} and \eqref{G} we get
\begin{equation} \label{F}
F_{\lambda}(x)=\frac{C_1(\lambda) x}{\Big(1-\frac{1+8h}{h}x \Big) \sqrt{1-4(1+8h)x}}.
\end{equation}
Finally, for all $p \geq 1$,
\begin{eqnarray*}
C_p(\lambda) &=& C_1(\lambda) \sum_{q+r+1=p} \Big(\frac{1+8h}{h} \Big)^r \binom{-1/2}{q} (-4)^q (1+8h)^q\\
&=& C_1(\lambda) \sum_{q=0}^{p-1} \Big(\frac{1+8h}{h} \Big)^{p-1-q} \frac{(-1)^q}{4^q} \binom{2q}{q} (-4)^q (1+8h)^q\\
&=&  C_1(\lambda) \Big(8+\frac{1}{h} \Big)^{p-1} \sum_{q=0}^{p-1} \binom{2q}{q} h^{q}.
\end{eqnarray*}

To prove the uniqueness and obtain the desired formula, it only remains to prove that we must have $C_1(\lambda)=\frac{1}{\lambda}$. Let $t_0$ be the map consisting of a single loop. Since any triangulation of the sphere can be seen as a triangulation of a $1$-gon (see Figure 2 in \cite{CLGmodif}), we must have $t_0 \subset \mathbb{T}_{\lambda}$ with probability $1$. Hence, $C_1(\lambda)=\frac{1}{\lambda}$.

The proof of the existence is essentially the same as in \cite{CurPSHIT}: consider the sequence $\big( C_p(\lambda) \big)_{p \geq 1}$ given by \eqref{Cformula} with $C_p(\lambda)>0$ for all $p$. It verifies \eqref{recrel}, so for all $p\geq 1$, we have
\[\lambda \frac{C_{p+1}(\lambda)}{C_p(\lambda)} +2 \sum_{i=0}^{p-1} Z_{i+1}(\lambda) \frac{C_{p-i}(\lambda)}{C_p(\lambda)}=1.\]
The last display can be interpreted as transition probabilities for the peeling process of $\mathbb{T}_{\lambda}$. This allows us to construct a random triangulation by peeling like in \cite{CurPSHIT}. The same arguments as in \cite{CurPSHIT} prove that we get a triangulation $\mathbb{T}_{\lambda}$ of the plane, that the distribution of $\mathbb{T}_{\lambda}$ is independent of the peeling algorithm used for the construction, and that $\mathbb{T}_{\lambda}$ is $\lambda$-Markovian.
\end{proof}

We note that in the critical case, we have a more explicit expression of $C_p(\lambda)$: we have $h=\frac{1}{4}$, so
\begin{equation}\label{critical}
C_p \big( \lambda_c \big) = \lambda_c^{-1} \times 12^{p-1} \sum_{q=0}^{p-1} \frac{1}{4^q} \binom{2q}{q}=2 \sqrt{3} \times 3^p \frac{p (2p)!}{p!^2},
\end{equation}
as easily proved by induction.

We will later need precise asymptotics of the numbers $C_p(\lambda)$. For that purpose, we already state the following estimate.

\begin{lem}\label{lemcombi2}
Let $(p_n)_{n \geq 0}$ be a sequence of positive integers such that $\frac{p_n}{n^2} \to \frac{3}{2}p$ where $p \geq 0$. Let also $(h_n)_{n \geq 0}$ be a sequence of numbers in $\big(0, \frac{1}{4} \big]$ such that
\[h_n=\frac{1}{4}-\frac{1}{2n^2}+o \Big( \frac{1}{n^2} \Big).\]
Then we have
\[ \frac{1}{\sqrt{p_n}} \sum_{q=0}^{p_n-1} \binom{2q}{q} h_n^q \xrightarrow[n \to +\infty]{}  \frac{2}{\sqrt{\pi}} \int_0^1 e^{-3px^2} \mathrm{d}x.\]
\end{lem}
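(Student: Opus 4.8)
The plan is to replace the binomial coefficient by its Stirling equivalent, recognize the remaining sum as a Riemann sum, and compute the limiting integral. Throughout I assume $p_n \to +\infty$ (automatic when $p>0$, and the only case that matters in the sequel). First, Stirling's formula gives $\binom{2q}{q} = \frac{4^q}{\sqrt{\pi q}}(1+r_q)$ with $|r_q| \le C/q$ for all $q \ge 1$. Since $0 < 4h_n \le 1$, the error term $\sum_{q=1}^{p_n-1} \frac{(4h_n)^q}{\sqrt{\pi q}} r_q$ is bounded in absolute value by $\sum_{q \ge 1} \frac{C}{\sqrt{\pi}\, q^{3/2}} < +\infty$, and the $q=0$ term of the original sum equals $1$; after dividing by $\sqrt{p_n} \to +\infty$ both are negligible. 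So it suffices to show
\[ \frac{1}{\sqrt{p_n}} \sum_{q=1}^{p_n-1} \frac{(4h_n)^q}{\sqrt{\pi q}} \xrightarrow[n \to +\infty]{} \frac{2}{\sqrt{\pi}} \int_0^1 e^{-3px^2}\,\mathrm{d}x. \]

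Next I control the exponential weight $(4h_n)^q$. From $4h_n = 1 - \frac{2}{n^2} + o(1/n^2)$ we get $\log(4h_n) = -\frac{2}{n^2} + o(1/n^2)$, hence $p_n \log(4h_n) = -2\frac{p_n}{n^2} + \frac{p_n}{n^2}\,o(1) \to -3p$. Since $t \mapsto e^t$ is $1$-Lipschitz on $(-\infty,0]$ and $q\log(4h_n) \le 0$, this yields, uniformly over $1 \le q \le p_n - 1$,
\[ \bigl| (4h_n)^q - e^{-3p q/p_n} \bigr| \;\le\; \bigl| p_n\log(4h_n) + 3p \bigr| \;=:\; \eta_n \xrightarrow[n\to\infty]{} 0. \]

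Now rewrite the sum of interest as $\frac{1}{\sqrt{\pi}}\cdot\frac{1}{p_n}\sum_{q=1}^{p_n-1} \frac{(4h_n)^q}{\sqrt{q/p_n}}$ and set $\psi(u) := e^{-3pu}/\sqrt{u}$. Replacing $(4h_n)^q$ by $e^{-3pq/p_n}$ costs at most $\eta_n \cdot \frac{1}{p_n}\sum_{q=1}^{p_n-1}(q/p_n)^{-1/2} = \eta_n \cdot \frac{1}{\sqrt{p_n}}\sum_{q=1}^{p_n-1} q^{-1/2} \le 2\eta_n \to 0$. The function $\psi$ is positive, continuous and decreasing on $(0,1]$ with $\int_0^1 \psi < +\infty$, so comparing the Riemann sum to the integral monotonically,
\[ \int_{1/p_n}^1 \psi(u)\,\mathrm{d}u \;\le\; \frac{1}{p_n}\sum_{q=1}^{p_n-1} \psi\Bigl(\frac{q}{p_n}\Bigr) \;\le\; \int_0^{(p_n-1)/p_n} \psi(u)\,\mathrm{d}u, \]
and both bounds tend to $\int_0^1 \psi(u)\,\mathrm{d}u$ as $p_n \to +\infty$. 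Hence the left-hand side of the displayed claim converges to $\frac{1}{\sqrt{\pi}}\int_0^1 e^{-3pu}\, u^{-1/2}\,\mathrm{d}u$, and the substitution $u = x^2$ turns this into $\frac{2}{\sqrt{\pi}}\int_0^1 e^{-3px^2}\,\mathrm{d}x$, as wanted.

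The computation is elementary; the one point requiring care is that the summand depends on $n$ through $h_n$, so a Riemann-sum convergence theorem cannot be applied as a black box — this is precisely the role of the uniform estimate for $(4h_n)^q$ in the second step — and one must also handle the mild, integrable singularity of $\psi$ at the origin, which the monotone sandwiching above disposes of.
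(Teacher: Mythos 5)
Your proof is correct and follows essentially the same route the paper sketches: replace $\binom{2q}{q}$ by its Stirling equivalent, recognize a Riemann sum, and change variables to land on $\frac{2}{\sqrt{\pi}}\int_0^1 e^{-3px^2}\mathrm{d}x$. The paper's proof explicitly "leaves the details to the reader," and your write-up supplies exactly those details (the uniform replacement of $(4h_n)^q$ by $e^{-3pq/p_n}$ and the monotone sandwich handling the $u^{-1/2}$ singularity), along with the correct observation that the statement implicitly requires $p_n\to\infty$, which is automatic when $p>0$ and is the only case used downstream.
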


\begin{proof}
Note that if $q=\lfloor x n^2 \rfloor$ with $x \in \big(0,\frac{3}{2}p \big)$, then
\[\frac{1}{\sqrt{p_n}} \binom{2q}{q}h_n^q \underset{n \to +\infty}{\sim} \frac{1}{n^2} \frac{1}{\sqrt{\pi x}} \sqrt{\frac{3}{2p}} e^{-2x}.\]
The Riemann sums are easily seen to converge to
\[ \sqrt{\frac{2}{3p}} \int_0^{3p/2} \frac{1}{\sqrt{\pi y}} e^{-2y} \mathrm{d}y,\]
which is equal to the desired integral after the change of variables $y=\frac{3p}{2} x^2$. The details are left to the reader.
\end{proof}

\section{Convergence to the hyperbolic Brownian plane}

\subsection{About the Gromov--Hausdorff--Prokhorov convergence}

We first recall from \cite{ADH13} the definition of the (bipointed) Gromov--Hausdorff--Prokhorov distance.

\begin{defn}\label{defGHP}
Let $\big( (X_1,d_1), x_1, y_1, \mu_1 \big)$ and $\big( (X_2,d_2), x_2, y_2, \mu_2 \big)$ be two compact bipointed measured metric spaces. We assume the measures $\mu_1$ and $\mu_2$ are finite (but they do not have to be probability measures). The \textit{Gromov--Hausdorff--Prokhorov distance} (we will sometimes write GHP distance and denote it by $d_{GHP}$) between $X_1$ and $X_2$ is the infimum of all $\eps>0$ for which there are isometrical embeddings $\Psi_1$ and $\Psi_2$ of $X_1$ and $X_2$ in the same metric space $(Z,d)$ such that:
\begin{itemize}
\item[a)]
$\Psi_1(x_1)=\Psi_2(x_2)$,
\item[b)]
$d \big( \Psi_1(y_1),\Psi_2(y_2) \big) \leq \eps$,
\item[c)]
the Hausdorff distance between $\Psi_1(X_1)$ and $\Psi_2(X_2)$ is not greater than $\eps$,
\item[d)]
the Lévy--Prokhorov distance between $\mu_1 \circ \Psi_1^{-1}$ and $\mu_2 \circ \Psi_2^{-1}$ is not greater than $\eps$.
\end{itemize}
The same definition holds for pointed measured compact metric spaces. We just need to withdraw condition b).
\end{defn}

If $\big( (X, d), x, \mu \big)$ is a pointed measured metric space and $r \geq 0$, we write $B_r(X)$ for the closed ball of radius $r$ centered at $x$ in $X$, equipped with the restrictions of the distance $d$ and of the measure $\mu$.

\begin{defn}
Let $\big( (X_1, d_1), x_1, \mu_1 \big)$ and $\big( (X_2, d_2), x_2, \mu_2 \big)$ be two locally compact pointed measured metric spaces. The $\textit{local Gromov--Hausdorff--Prokhorov distance}$ between $X_1$ and $X_2$, which we will denote by $d_{LGHP}(X_1, X_2)$, is the sum
\[ \sum_{r \geq 1} \frac{1}{2^r} \max \Big( 1, d_{GHP} \big( B_r(X_1), B_r(X_2) \big) \Big).\]
\end{defn}

\begin{defn}
Let $\big( (X, d), x, y, \mu \big)$ be a locally compact bi-pointed measured metric space. The \textit{hull} of center $x$ and radius $r$ with respect to $y$ is the union of the closed ball of radius $r$ centered at $x$ and all the connected components of its complementary that do not contain $y$. It is denoted by $\overline{B_r}(X,x,y)$. We will write $\overline{B_r}(X)$ when there is no ambiguity. Equipped with the restrictions of $d$ and $\mu$, it is a compact measured metric space.

If $\big( (X, d), x \big)$ is unbounded and one-ended we will omit the second distinguished point: the hull will be the union of the ball of radius $r$ centered at $x$ and all the bounded connected components of its complementary. It means that $y$ is at infinity.
\end{defn}

Recall that there are two natural ways to equip a part $A$ of $(X,d)$ with a metric: the \textit{induced} metric, i.e. the restriction of $d$ to $A$, and the \textit{intrisic} one, which makes $A$ a geodesic space when it is well-defined (see \cite{BBI01}, Chapter 2.3). In order to avoid further confusions, we insist that $\overline{B_r}(X)$ is equipped with the induced distance. If $m$ is a map, we will also write $B_r(m)$ for the map consisting of all the faces of $m$ having at least one vertex at distance at most $r-1$ from the root vertex, along with all their vertices and edges. We write $B_r^{\bullet}(m)$ for the map that is the union of $B_r(m)$ and all the bounded connected components of its complement. When $m$ is seen as a metric space, the hull $\overline{B_r}(m)$ has the same set of vertices as $B_r^{\bullet}(m)$. However, the distances inherited from $m$ are not the same as those in $B_r^{\bullet}(m)$. We will always see $\overline{B_r}(m)$ as a metric space and $B_r^{\bullet}(m)$ as a map.

We will need several times to deduce properties of one of these distances from properties of the other. To this end, we point out that if $m$ is a map, then $\overline{B_r}(m)$ is a measurable function of $B_{2r}^{\bullet}(m)$ for all $r \geq 0$. Indeed, any geodesic in $m$ between two vertices $x$ and $y$ of $\overline{B_r}(m)$ must stay in $B_{2r}^{\bullet} (m)$, so the distances between $x$ and $y$ in $m$ and in $B_{2r}^{\bullet}(m)$ coincide.

We will also need the following technical result that is proved in Appendix A.

\begin{prop}\label{GHP}
Let $\big( (X_n, d_n), x_n, \mu_n \big)$ be a sequence of unbounded, locally compact, pointed measured metric spaces. Assume that $\big( (X_n, d_n), x_n, \mu_n \big)$ converges for the local GHP distance to a measured metric space $\big( (X,d), x, \mu)$. Let $r \geq 0$. We assume that:
\begin{itemize}
\item[(i)]
$X$ and the $X_n$ are one-ended length spaces,
\item[(ii)]
every non-empty open subset of $X$ has positive measure,
\item[(iii)]
the function $V: s \to \mu \big( \overline{B_s}(X) \big)$ is continuous at $r$.
\end{itemize}

\noindent Then:
\begin{itemize}
\item[1)]
$\overline{B_r}(X_n)$ converge for the GHP distance to $\overline{B_r}(X)$,
\item[2)]
in particular, we have the convergence
\[\mu_n \big( \overline{B_r}(X_n) \big) \longrightarrow \mu \big( \overline{B_r}(X) \big).\]
\end{itemize}
Moreover, the proposition also holds for bipointed, compact spaces $\big( (X_n, d_n), x_n,y_n, \mu_n \big)$ and $\big( (X, d), x,y, \mu \big)$.
\end{prop}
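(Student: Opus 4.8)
The plan is to reduce everything to a statement about the convergence of hulls in a common ambient space, then transfer the measure convergence along the embeddings. First I would use a Skorokhod-type representation: since $d_{LGHP}(X_n,X)\to 0$, after passing to a subsequence there exist a locally compact metric space $(Z,d_Z)$ and isometric embeddings $\Psi_n:X_n\to Z$, $\Psi:X\to Z$ sending $x_n$, $x$ to a common point $\rho$, such that on every ball $B_R(\rho)$ we have $\Psi_n(B_R(X_n))\to\Psi(B_R(X))$ for the Hausdorff distance and $(\Psi_n)_*\mu_n\to\Psi_*\mu$ for the Lévy--Prokhorov distance (weak convergence of finite measures on $B_R(\rho)$). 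Since the conclusion is a statement about distributions / deterministic limits along the original sequence, working along subsequences and recombining at the end is harmless. From now on I identify the $X_n$ and $X$ with their images in $Z$.

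The core of the argument is part 1): that $\overline{B_r}(X_n)\to\overline{B_r}(X)$ for the Hausdorff distance in $Z$. Here the length-space and one-endedness hypotheses in (i) are essential. One inclusion is the easy one: if $z\in\overline{B_r}(X)$, pick nearby points $z_n\in X_n$; I must argue that, up to a small error, $z_n$ (or a nearby point of $X_n$) lies in $\overline{B_r}(X_n)$ — i.e.\ is separated from infinity by the $r$-ball. The reverse inclusion is symmetric. The mechanism is: a point is outside the hull of radius $r$ iff it can be joined to ``infinity'' (the complement of a large ball $B_R$) by a path avoiding $B_r$; in a length space such a path, if it exists, can be taken of controlled length, hence lives in some $B_{R'}$, and the Hausdorff convergence of the balls $B_{R'}(X_n)\to B_{R'}(X)$ lets one transfer the path from $X$ to $X_n$ and back with a small perturbation. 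The continuity hypothesis (iii) enters to rule out the pathological case where the boundary sphere $\partial\overline{B_r}$ carries positive measure or where arbitrarily small perturbations of $r$ change the hull drastically; it guarantees that $\overline{B_{r-\eps}}(X)$ and $\overline{B_{r+\eps}}(X)$ squeeze $\overline{B_r}(X)$ in both Hausdorff and measure sense as $\eps\to0$. This transfer of ``escape to infinity'' across the approximating sequence, done uniformly enough, is the main obstacle: one has to quantify how a near-geodesic escape route in $X$ becomes one in $X_n$ for $n$ large, controlling simultaneously its distance from the $r$-ball and its total extent, and handle the fact that hulls are defined via connected components of complements (a non-closed, non-monotone-under-perturbation notion).

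Once part 1) is in hand, part 2) follows quickly. The hull $\overline{B_r}(X_n)$ sits inside a fixed ball $B_R(\rho)$ for all $n$ (its radius is at most $r$ plus the diameter needed to close off the bounded components, bounded using (i) and the Hausdorff convergence). On $B_R(\rho)$ we have weak convergence $(\Psi_n)_*\mu_n\to\Psi_*\mu$; combined with the Hausdorff convergence of the sets $\overline{B_r}(X_n)$ to $\overline{B_r}(X)$ and the fact, from (iii) together with (ii), that the limiting measure gives zero mass to the topological boundary of $\overline{B_r}(X)$, a standard portmanteau argument gives $(\Psi_n)_*\mu_n\big(\overline{B_r}(X_n)\big)\to\Psi_*\mu\big(\overline{B_r}(X)\big)$, i.e.\ $\mu_n(\overline{B_r}(X_n))\to\mu(\overline{B_r}(X))$. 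Upgrading this to GHP convergence of the measured metric spaces $\overline{B_r}(X_n)$ (not just their total masses) is the same portmanteau/Hausdorff package applied to the restricted measures, since the embeddings $\Psi_n,\Psi$ already realize the metrics compatibly. Finally, since every subsequence of the original sequence has a further subsequence along which the conclusion holds and the limit $\overline{B_r}(X)$ does not depend on the subsequence, the full sequence converges, which completes the proof.
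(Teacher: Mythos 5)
Your overall architecture (embed everything in a common space, prove Hausdorff convergence of the hulls there, then transfer the measures by a Prokhorov/portmanteau argument) is the same as the paper's, but the heart of the matter is missing. You yourself flag the key step --- transferring an ``escape route to infinity'' between $X$ and $X_n$, uniformly and compatibly with the component-wise definition of the hull --- as ``the main obstacle'', and then you do not resolve it. That step is precisely what the paper's Lemma \ref{inclusion} provides: working with $\eps$-chains rather than paths (any two points of a compact connected set are joined by an $\eps$-chain, and the non-existence of an $\eps$-chain from $y$ to the far marked point avoiding $B_{r+4\eps}$ certifies that $y$ lies in the hull), one gets the quantitative two-sided inclusions $\overline{B_r}(X)\subset\overline{B_{r+4\eps}}(X_n)^{\eps}$ and $\overline{B_r}(X_n)\subset\overline{B_{r+4\eps}}(X)^{\eps}$ whenever the Hausdorff distance between the (suitably truncated, bipointed) spaces is at most $\eps$. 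Without this (or an equivalent quantitative transfer), statements like ``the Hausdorff convergence of the balls lets one transfer the path from $X$ to $X_n$ with a small perturbation'' remain a restatement of what has to be proved: Hausdorff closeness only matches points, not paths, and the perturbed radius $r+4\eps$ must then be compared back to $r$.

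That comparison is the second gap. You assert that hypothesis (iii) ``guarantees that $\overline{B_{r-\eps}}(X)$ and $\overline{B_{r+\eps}}(X)$ squeeze $\overline{B_r}(X)$ in both Hausdorff and measure sense''. The inner Hausdorff approximation $\overline{B_r}(X)\subset\overline{B_{r-\eps}}(X)^{\delta}$ is not a consequence of (iii) alone: the paper's Lemma \ref{Hausdorff} obtains it by observing that the set $\big(\overline{B_r}(X)\setminus B_r(X)\big)\setminus\bigcup_{\eps>0}\overline{B_{r-\eps}}(X)$ is open and, by (iii), of measure zero, hence empty by (ii); positivity of the measure on non-empty open sets is indispensable here, and your sketch never uses (ii) at this point. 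Finally, the uniform bound on the radii of the hulls $\overline{B_r}(X_n)$ (needed before any portmanteau argument on a fixed ball) is asserted but not proved; in the paper it again comes out of Lemma \ref{inclusion} applied with $\eps=1$. Once these ingredients are in place, your part 2) and the measure transfer go through essentially as you describe, so the gap is concentrated in, but genuinely concentrated in, the Hausdorff convergence of the hulls.
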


\subsection{Convergence of the type-I UIPT to the Brownian plane}

If $(X,d)$ is a metric space and $\alpha>0$, we will write $\alpha X$ for the metric space $(X, \alpha d)$. We recall that $\mathbb{T}=\mathbb{T}_{\lambda_c}$ is the type-I UIPT. If $t$ is a (possibly infinite) triangulation, recall that $B_r(t)$ denotes its ball of radius $r$ around the origin of its root edge and $\overline{B_r}(t)$ its hull, endowed with the induced metric. We denote by $\p$ the Brownian plane defined in \cite{CLGplane}. Our goal in this section is to prove Theorem \ref{joint}. We start with the first two marginals, whereas the convergence of the third one will be the content of Section 2.3.

\begin{prop} \label{convergenceBP}
Let $\mu_{\mathbb{T}}$ be the measure on $\mathbb{T}$ giving mass $1$ to each vertex, and let $\mu_{\p}$ be the volume measure on $\p$ \cite{CLGplane, CLGHull}. We have the convergence
\[\Big( \frac{1}{n^{1/4}} \mathbb{T}, \frac{1}{3n}\mu_{\mathbb{T}} \Big) \xrightarrow[n \to +\infty]{(d)} \big( \p, \mu_{\p} \big)\]
for the local GHP distance.
\end{prop}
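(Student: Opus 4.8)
The plan is to follow the strategy of Curien--Le Gall \cite{CLGplane}, who proved the analogous statement for the UIPQ, and adapt it to type-I triangulations. The first ingredient is the known convergence of large finite triangulations to the Brownian map: by \cite{LG11} (type-I version), if $T_n$ is a uniform type-I triangulation of the sphere with $n$ vertices, then $(n^{-1/4} T_n, n^{-1} \mu_{T_n})$ converges in distribution, for the GHP distance, to the Brownian map (up to the correct multiplicative constant for the metric, which is where the $\frac{1}{3n}$ and the $n^{1/4}$ normalizations and the enumeration constant from \eqref{asympsphere} enter). One then has to pass from the Brownian map to the Brownian plane, and from finite triangulations to the UIPT, via a local coupling argument: the UIPT is the local limit of the $T_n$ (this is the defining property of the type-I UIPT, see \cite{CLGmodif,St14}), while the Brownian plane is, informally, the scaling limit of the Brownian map "seen from a point far from every other special point."

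The key technical step, exactly as in \cite{CLGplane}, is an \emph{invariance-under-re-rooting plus absolute-continuity} argument at the level of hulls. Concretely, I would fix $r \geq 0$ and compare $\overline{B_{rn^{1/4}}}^{\bullet}(\mathbb{T})$ with $\overline{B_{rn^{1/4}}}^{\bullet}(T_N)$ for $N$ much larger than $n$: since both the UIPT and the re-rooted uniform triangulation of size $N$ have the same spatial Markov / peeling transitions up to the boundary of the hull, the law of the hull of radius $rn^{1/4}$ in $\mathbb{T}$ is absolutely continuous with respect to its law in $T_N$ with a density that is explicit in terms of the perimeter and the number of vertices of the hull, and which converges to $1$ as $N \to \infty$ (this uses the enumeration asymptotics \eqref{enumeration}--\eqref{asympsphere} and the fact that $C(p)$ grows only polynomially in $p$, controlling the perimeter). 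Combined with the convergence of $n^{-1/4} T_N$ (for $N$ on the right scale relative to $n$) to the Brownian map and Proposition \ref{GHP} — whose hypotheses (one-endedness, positivity of the volume measure on open sets, a.s.\ continuity of $r \mapsto |\overline{B_r}(\p)|$) are satisfied by the Brownian map/plane — this yields the convergence of $n^{-1/4}\overline{B_r}(\mathbb{T})$ to $\overline{B_r}(\p)$ for each fixed $r$, hence the local GHP convergence.

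The measure convergence is handled simultaneously: part 2) of Proposition \ref{GHP} gives $\frac{1}{3n}\mu_{\mathbb{T}}(\overline{B_r}(\mathbb{T})) \to \mu_{\p}(\overline{B_r}(\p))$ once the metric convergence of the hulls is established, and combined with the tightness coming from the GHP convergence of the hulls this upgrades to joint convergence of the measured spaces. Here the normalizing constant $\frac{1}{3}$ is forced: it is the ratio between the volume of the hull of radius $r$ in the UIPT (which behaves like $n \cdot$ const) and the volume of $\overline{B_r}(\p)$, and it must be checked against the enumeration constants — this is the origin of the factor $3$ appearing in Theorem \ref{joint}. I expect the main obstacle to be the absolute-continuity estimate between the hull of the UIPT and the hull of a large finite triangulation, specifically showing that the perimeter process does not blow up (so that the density stays bounded and converges to $1$): this requires the precise two-variable enumeration estimate of Lemma \ref{lemcombi1} together with a priori bounds on $|\partial \overline{B_{rn^{1/4}}}(\mathbb{T})|$ on the scale $n^{1/2}$, in the spirit of the peeling estimates of \cite{CLGpeeling}.
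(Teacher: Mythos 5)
Your proposal follows essentially the same strategy as the paper: start from Theorem~\ref{convergenceBM} (GHP convergence of uniform type-I triangulations to the Brownian map), and pass to the UIPT/Brownian plane via a coupling that holds ``uniformly in the scale,'' following the scheme of Theorem~2 of~\cite{CLGplane}. The ``absolute continuity plus re-rooting'' step you identify as the key obstacle is precisely what the paper isolates as Proposition~\ref{coupling}: one pairs a uniform sphere triangulation $T_n$ with a uniform distinguished vertex $y$, so that the hull $B_r^\bullet(T_n^*)$ taken with respect to $y$ admits an explicit probability $\frac{(n-m)\#\mathscr{T}_{n-m,p}}{n\,\#\mathscr{T}_{n-1,1}}$, and Lemma~\ref{lemcombi1} together with~\eqref{critical} shows this ratio approaches $\P(B_r^\bullet(\mathbb{T})=t)$ as soon as $n\gg r^4$, giving a total-variation coupling of the map hulls. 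Two small points worth flagging: first, $C(p)$ does \emph{not} grow polynomially in $p$ (it is $\sim\text{const}\cdot 12^p\sqrt{p}$ by~\eqref{enumasymp}); the exponential factor cancels exactly against $\#\mathscr{T}_{n-m,p}/\#\mathscr{T}_{n-1,1}$, and what controls the convergence is the Gaussian correction $\exp(-2p^2/3(n-m))$ together with a priori bounds on the perimeter of the hull, which come from~\cite{CLGpeeling}. Second, since $\overline{B_r}$ as a metric space is only determined by the map $B_{2r}^\bullet$, one must apply the coupling with radius $2r$ before passing from maps to metric spaces; this is the ``$2r$ trick'' the paper makes explicit, and it should be stated in your argument as well. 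Your invocation of Proposition~\ref{GHP} is a slight deviation from the paper (which instead directly follows the proof of Theorem~2 in~\cite{CLGplane}, deferring Proposition~\ref{GHP} to the joint convergence in Proposition~\ref{joint1}), but either route works here.
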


We note that this result has been proved for quadrangulations in \cite{CLGplane} for the Gromov--Hausdorff distance and in \cite{W15} for the (stronger) GHP convergence. Our main tool will be the following theorem by Curien and Le Gall. It is a refinement of the convergence of uniform type-I triangulations proved by Le Gall in \cite{LG11}.

\begin{thm}[\cite{CLGmodif}, Appendix A1, Theorem 6]\label{convergenceBM}
Let $T_n$ be a uniform type-I triangulation of the sphere with $n$ vertices and $\mu_{T_n}$ the counting measure on the set of its vertices. Let also $\mathbf{m}_{\infty}$ be the Brownian map and $\mu_{\mathbf{m}_{\infty}}$ its volume measure (\cite{LG11}). The following convergence holds for the GHP distance:
\[ \Big( \frac{1}{n^{1/4}} T_n, \frac{1}{n} \mu_{T_n} \Big) \xrightarrow[n \to +\infty]{(d)} \Big( \frac{1}{3^{1/4}} \mathbf{m}_{\infty}, \mu_{\mathbf{m}_{\infty}} \Big).\]
\end{thm}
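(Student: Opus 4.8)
The plan is to obtain the measured convergence by upgrading the Gromov--Hausdorff convergence of uniform type-I triangulations to the Brownian map --- established by Le Gall \cite{LG11} for the metric alone, with the scaling constant $3^{-1/4}$ --- into a Gromov--Hausdorff--Prokhorov statement. The natural framework is the bijective encoding of type-I triangulations of the sphere by labeled trees of Bouttier--Di Francesco--Guitter type: a uniform triangulation with $n$ vertices becomes a uniform labeled mobile $\theta_n$ from a suitable critical, finite-variance Boltzmann family, in such a way that the non-root vertices of $T_n$ are in bijection with a distinguished class of corners of $\theta_n$, the graph distances to the root vertex are read off from the labels by the Cori--Vauquelin--Schaeffer-type cactus formula, and the total number of corners visited by the contour exploration is $\Theta(n)$. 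First I would fix this weight sequence explicitly using the enumeration formulas of Section~1 (in particular \eqref{enumeration}--\eqref{enumasymp}), check its criticality and the second-moment condition, and record that the associated scaling constant equals $3^{-1/4}$; this is a routine but necessary computation.

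Next, by the Skorokhod representation theorem, realize on a single probability space the convergence of the rescaled contour and label functions of $\theta_n$ to the pair $(\mathbf{e}, Z)$ formed by the normalized Brownian excursion and the head of the Brownian snake driven by it (Le Gall's invariance principle, building on Marckert--Miermont). On this space one gets simultaneously and almost surely: the uniform convergence of the discrete label-metric on contour times to the Brownian-map pseudo-metric $D$ on $[0,1]$ --- which is the hard analytic content of \cite{LG11}, and which yields the a.s. Gromov--Hausdorff convergence $\tfrac{1}{n^{1/4}}T_n \to 3^{-1/4}\mathbf{m}_\infty$ in a common embedding --- together with the uniform convergence of the discrete canonical projections $p_n$ onto $T_n$ to the canonical projection $\mathbf{p}:[0,1]\to \mathbf{m}_\infty$.

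It then remains to identify the limit of the measures in this coupling. Let $\Lambda_n$ be the pushforward under $p_n$ of the uniform probability measure on the $\Theta(n)$ contour times (the time interval rescaled to $[0,1]$); since $p_n\to\mathbf{p}$ uniformly and this uniform measure converges weakly to Lebesgue measure on $[0,1]$, the Lévy--Prokhorov distance between $\Lambda_n$ and $\mathbf{p}_*(\mathrm{Leb}_{[0,1]}) = \mu_{\mathbf{m}_\infty}$ tends to $0$ in the common space. But $\Lambda_n$ is the degree-biased version of $\tfrac1n\mu_{T_n}$ (each vertex carries mass proportional to its degree), so I would close the argument by showing that the Lévy--Prokhorov distance between $\Lambda_n$ and $\tfrac1n\mu_{T_n}$ tends to $0$ in probability. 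This reduces to a uniform integrability statement for the degree of a uniformly chosen vertex of $T_n$, which follows from the local convergence of $T_n$ to the type-I UIPT $\mathbb{T}$ together with an explicit bound on the degree distribution in $\mathbb{T}$ (or directly from the estimates of Section~1). Combining the three a.s. convergences --- metric, projection, and hence measure --- gives a.s. GHP convergence of $\big(\tfrac{1}{n^{1/4}}T_n,\tfrac1n\mu_{T_n}\big)$ to $\big(3^{-1/4}\mathbf{m}_\infty,\mu_{\mathbf{m}_\infty}\big)$ along the coupling, hence the asserted convergence in distribution.

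The main obstacle is this measure-identification step, specifically the passage from the corner-induced (degree-biased) measure $\Lambda_n$ to the vertex-counting measure: one must rule out that a positive fraction of the total mass sits on a vanishing fraction of high-degree vertices, i.e. obtain the uniform integrability of vertex degrees. Everything else is either cited (the metric convergence of \cite{LG11}, the snake invariance principle) or routine bookkeeping (the weight sequence, the constant $3^{-1/4}$, and the standard common-embedding manipulations in the spirit of \cite{ADH13}). A secondary technical point is to make precise that the quotient of $[0,1]$ by the pseudo-metric $D$, equipped with $\mathbf{p}_*(\mathrm{Leb})$, is genuinely the limit for the measured topology and not merely for the metric one, which is handled by the continuity of the relevant functionals of $(\mathbf{e}, Z)$.
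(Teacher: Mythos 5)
First, a point of framing: Theorem \ref{convergenceBM} is not proved in this paper at all --- it is imported verbatim from \cite{CLGmodif} (Appendix A1, Theorem 6), so there is no internal proof to compare against; what you have written is a re-derivation of that external result. Your overall architecture (encode the uniform type-I triangulation by a labeled mobile, invoke the invariance principle for the contour and label functions and Le Gall's Gromov--Hausdorff convergence with the constant $3^{-1/4}$, then identify the limit measure as the pushforward of Lebesgue measure under the canonical projection) is essentially the strategy of the cited proof and of the analogous measured statements for quadrangulations (\cite{CLGplane,W15}).

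There is, however, a genuine gap in the one step you yourself single out as the main obstacle: the passage from the corner-induced measure $\Lambda_n$ to the vertex-counting measure $\tfrac1n\mu_{T_n}$. Uniform integrability of the degree of a uniformly chosen vertex is not sufficient for this. The two measures are supported on the same vertex set and assign genuinely different weights (mass proportional to the number of corner visits, i.e.\ the mobile degree, versus mass $1/n$), so their total variation distance does not tend to $0$; Prokhorov closeness can therefore only come from \emph{spatial} averaging, namely from the statement that in every macroscopic region of the map the total number of corner visits is asymptotically proportional to the number of vertices, uniformly over regions. Uniform integrability of the degree of a typical vertex only controls the global mass carried by atypically high-degree vertices; it does not preclude, say, one macroscopic piece of the map having average mobile degree systematically larger than another, which would keep $d_{P}(\Lambda_n,\tfrac1n\mu_{T_n})$ bounded away from $0$ while all degrees stay bounded. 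So the reduction as stated does not close the argument.

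The standard (and correct) substitute is to work in the time domain rather than vertex by vertex: write $\tfrac1n\mu_{T_n}$ as the pushforward under the discrete projection $p_n$ of the empirical measure on first-visit (equivalently, last-visit) corner times, and prove a uniform law of large numbers along the contour: if $K_n$ denotes the total number of corners, the number of distinct vertices visited among the first $\lfloor tK_n\rfloor$ corners equals $n\,(t+o(1))$ uniformly in $t\in[0,1]$, in probability. This follows from the convergence of the rescaled encoding (Lukasiewicz/contour) paths for the critical Boltzmann mobiles, and it shows that the empirical measure of first-visit times converges to Lebesgue measure on $[0,1]$; combined with the uniform convergence $p_n\to\mathbf{p}$ in your coupling, both $\Lambda_n$ and $\tfrac1n\mu_{T_n}$ then converge to $\mathbf{p}_*(\mathrm{Leb})=\mu_{\mathbf{m}_\infty}$, and the GHP statement follows. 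With this equidistribution lemma in place of the uniform-integrability reduction (the local convergence to the UIPT is then not needed at all for this step), the rest of your plan is sound.
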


To prove Proposition \ref{convergenceBP} we need to invert the local and the scaling limit. Hence, we need the local convergence $T_n \to \mathbb{T}$ to be "uniform in the scale", which is the point of the next lemma. It parallels Proposition $1$ of \cite{CLGplane} in the case of type-I triangulations.

\begin{prop}\label{coupling}
Let $n \geq 1$. Let also $T_n^*=(T_n, y)$ be a uniform type-I triangulation of the sphere with $n$ vertices, equipped with a uniform distinguished vertex $y$. We write $B_r^{\bullet}(T_n^*)$ for the hull of radius $r$ of $T_n$, centered at the root, with respect to $y$. Then, for all $\eps>0$, there is a constant $A>0$ such that if $n > Ar^4$, there is a coupling between $T_n^*$ and $\mathbb{T}$ in which
\[ \mathbb{P} \big( B_r^{\bullet}(T_n^*) = B_r^{\bullet}(\mathbb{T}) \big) \geq 1-\eps.\]
\end{prop}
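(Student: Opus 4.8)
\textbf{Proof strategy for Proposition \ref{coupling}.} The plan is to mimic the proof of Proposition~1 of \cite{CLGplane}, replacing the quadrangulation enumeration by the type-I triangulation estimates gathered in Section~1, in particular Lemma~\ref{lemcombi1} and the asymptotics \eqref{asympsphere}. The key point is that both $B_r^{\bullet}(T_n^*)$ and $B_r^{\bullet}(\mathbb{T})$ can be built by the \emph{same} peeling exploration, stopped when the explored region contains the full ball of radius $r$; the coupling is then obtained by running the two peeling processes with a maximal coupling at each step, and the probability that they ever disagree is controlled by comparing, for each finite hull $t$ with one hole of perimeter $p$ and $|t|=v$ vertices, the probabilities
\[
\P\big( t \subset \mathbb{T} \big) = C_p(\lambda_c)\,\lambda_c^{v}
\quad\text{and}\quad
\P\big( t \subset T_n^* \big) = \frac{\#\mathscr{T}_{n-v,p}}{\#\mathscr{T}_{n-1,1}} \cdot \big(\text{root-face correction}\big),
\]
the second formula holding because, conditionally on the hull being $t$, the complement is a uniform triangulation of a $p$-gon with the appropriate number of inner vertices containing the marked vertex $y$.

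First I would set up the common peeling exploration: fix a deterministic peeling algorithm $\mathcal{A}$ that always targets a vertex on the current frontier at minimal distance from the root, so that after finitely many steps the discovered map contains $B_r^{\bullet}$; this is the standard ``peeling by layers''. At each peeling step, given the current explored region with a hole of perimeter $p$, the transition probabilities in $\mathbb{T}$ are the ones read off from \eqref{recrel}, while in $T_n^*$ they are ratios of the $\#\mathscr{T}_{\cdot,\cdot}$; couple the two chains step by step so that they agree as long as the transition laws are close in total variation. Then $\{B_r^{\bullet}(T_n^*) = B_r^{\bullet}(\mathbb{T})\}$ holds as soon as no disagreement occurs before the exploration of the ball of radius $r$ is complete.

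Next I would bound the failure probability. By a union bound over peeling steps, $\P\big(B_r^{\bullet}(T_n^*)\neq B_r^{\bullet}(\mathbb{T})\big)$ is at most the expected number of steps before completion, times the worst-case total-variation distance between the two transition laws over the range of perimeters and volumes actually visited. One controls the number of steps and the maximal perimeter/volume of $B_r^{\bullet}(\mathbb{T})$ by the known fact (from \cite{CLGpeeling}, used already for Theorem~\ref{joint}) that these are $O_{\P}(r^4)$ and $O_{\P}(r^2)$ respectively, so on an event of probability $\geq 1-\eps/2$ all explored hulls have $p \leq Kr^2$ and $v \leq Kr^4$ for a constant $K=K(\eps)$. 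On that event, since $n > Ar^4$, one has $n-v \to +\infty$ with $p = O(\sqrt{n-v})$, so Lemma~\ref{lemcombi1} and \eqref{asympsphere} give
\[
\frac{\#\mathscr{T}_{n-v,p}}{\#\mathscr{T}_{n-1,1}} = C_p(\lambda_c)\,\lambda_c^{v}\,\big(1+o(1)\big)
\]
uniformly over $p \leq Kr^2$, $v \leq Kr^4$, where the $o(1)$ is uniform provided $A$ is large enough relative to $K$ (the $\exp(-2p^2/3n)$ factor in Lemma~\ref{lemcombi1} tends to $1$ here, and the polynomial prefactors match by \eqref{enumasymp} and \eqref{critical}). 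Feeding this into the step-by-step total-variation bound makes the per-step discrepancy $o(1/r^4)$, hence the total failure probability is $\eps/2 + o(1) \leq \eps$ once $A$ is chosen large, which is the claim.

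\textbf{Main obstacle.} The delicate point is the \emph{uniformity} of the enumeration asymptotics: Lemma~\ref{lemcombi1} is stated pointwise, but here I need $\#\mathscr{T}_{n-v,p} \sim C_p(\lambda_c)\lambda_c^{v+\cdots}$ with an error that is uniform over the whole polynomial window $p \leq Kr^2$, $v \leq Kr^4$ with $n > Ar^4$, and one must check that the Stirling expansions behind \eqref{enumeration} and Lemma~\ref{lemcombi1} produce error terms that are genuinely uniform on this range (this is exactly where the scaling $n > Ar^4$ and the freedom to enlarge $A$ are used). A secondary technical nuisance is bookkeeping the root-face/marked-vertex conventions so that the identity $\P(t\subset T_n^*)=\#\mathscr{T}_{n-|t|,p}/\#\mathscr{T}_{n-1,1}$ holds on the nose; this is routine but must be done carefully, exactly as in \cite{CLGplane}. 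I would organize the write-up so that the uniform enumeration estimate is isolated as a preliminary step, after which the coupling argument is essentially identical to the one in \cite{CLGplane}.
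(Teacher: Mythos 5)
Your enumerative inputs are the right ones (Lemma \ref{lemcombi1}, the explicit asymptotics of $C_p(\lambda_c)$, and the fact from \cite{CLGpeeling} that with probability $\geq 1-\eps/2$ the hull has $|\partial B_r^{\bullet}| \asymp r^2$ and $|B_r^{\bullet}| \asymp r^4$), but the coupling mechanism you propose does not work quantitatively. The flaw is the claim that the per-step total-variation discrepancy between the peeling kernels of $T_n^*$ and $\mathbb{T}$ is $o(1/r^4)$. It is not: at a state with perimeter $p$ and $v$ explored vertices, the dominant correction to the $T_n^*$ kernel relative to the UIPT kernel comes from the factor $\exp(-2p^2/(3(n-v)))$ in Lemma \ref{lemcombi1}, whose one-step variation is of order $p/n$. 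On the relevant window $p \asymp r^2$, $n > Ar^4$, this is of order $1/(Ar^2)$, which is much larger than $1/r^4$. Since the exploration of $B_r^{\bullet}$ takes $\asymp r^4$ peeling steps (one triangle per step, and the hull has $\asymp r^4$ triangles), your union bound gives a failure probability of order $\sum_{\mathrm{steps}} p/n \asymp r^2/A$, which diverges with $r$ for fixed $A$. The uniform estimate $\#\mathscr{T}_{n-v,p}/\#\mathscr{T}_{n-1,1} = C_p(\lambda_c)\lambda_c^{v}(1+o(1))$ that you correctly identify does not rescue this: it controls the \emph{product} of the per-step likelihood ratios (there is massive cancellation of the $\pm O(p/n)$ per-step log-ratios along the exploration), but a step-by-step maximal coupling pays the \emph{sum} of the per-step TV distances, which is the wrong quantity.

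The fix is to drop the peeling altogether and compare the laws of the whole hull directly, which is what the paper does: for any admissible $t$ with perimeter $p$ and $m$ vertices in the typical window, $\P(B_r^{\bullet}(\mathbb{T})=t) = C_p(\lambda_c)\lambda_c^{m}$ exactly, while $\P(B_r^{\bullet}(T_n^*)=t) = \frac{(n-m)\#\mathscr{T}_{n-m,p}}{n\,\#\mathscr{T}_{n-1,1}}$ (note the extra factor $n-m$ for the choice of the marked vertex, which your formula should include), and Lemma \ref{lemcombi1} together with \eqref{critical} shows this ratio lies in $[1-\delta,1+\delta]$ uniformly over the window once $A$ is large and $r$ is large. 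Since the window carries probability $\geq 1-\delta$ under the law of $B_r^{\bullet}(\mathbb{T})$, the total variation distance between the two hull laws is at most $4\delta$, and the maximal coupling theorem yields the desired coupling. This is a genuinely simpler route and avoids any need for per-step uniformity; the only uniformity required is of the ratio over the polynomial window, which is exactly what Lemma \ref{lemcombi1} provides.
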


\begin{proof}
The proof of Proposition 1 in \cite{CLGplane} uses the Schaeffer bijection between maps and trees. Although a similar bijection exists for triangulations, it is more complicated. Hence, we do the computations directly on maps instead of trees as in Section 6 of \cite{CLGmodif}.

Let $\delta>0$. We know from Section 6.1 of \cite{CLGpeeling} that $\frac{1}{r^4}|B_r^{\bullet}(\mathbb{T})|$ and $\frac{1}{r^2}|\partial B_r^{\bullet}(\mathbb{T})|$ converge in distribution to a.s. positive random variables. Hence, there are positive constants $c_{\delta}$ and $C_{\delta}$ such that, for $r$ large enough, we have
\[ \P \left( c_{\delta} \, r^2 \leq |\partial B_r^{\bullet}(\mathbb{T})| \leq C_{\delta} \, r^2 \mbox{ and } c_{\delta} \, r^4 \leq |B_r^{\bullet}(\mathbb{T})| \leq C_{\delta} \, r^4 \right) \geq 1-\delta.\]
Now take $m$ and $p$ such that $c_{\delta} \, r^2 \leq p \leq C_{\delta} r^2$ and $c_{\delta} \, r^4 \leq m \leq C_{\delta} r^4$. Let $t$ be a triangulation of a $p$-gon with $m$ vertices (including the boundary) such that $t$ is a possible value of $B_r^{\bullet}(\mathbb{T})$. On the one hand, we have
\[ \mathbb{P} \big( B_r^{\bullet}(\mathbb{T}) = t \big) \underset{\mathrm{Def \,} \ref{defMarkov}}{=} C_p ( \lambda_c ) \lambda_c^m \underset{\eqref{critical}}{=} 2 \sqrt{3} \times 3^p \frac{p (2p)!}{p!^2} \cdot \lambda_c^{m} \underset{p \to +\infty}{\sim} \frac{2\sqrt{3}}{\sqrt{\pi}} \lambda_c^m \times 12^p \sqrt{p}.\]

On the other hand, we fix $A>C_{\delta}$ and we take $n \geq Ar^4$. There are $n \# \mathscr{T}_{n-1,1}$ pointed triangulations of the sphere with $n$ vertices. Moreover, if $B_r^{\bullet}(T_n^*) = t$, there are $\# \mathscr{T}_{n-m,p}$ ways to fill the $p$-gon to complete $T_n^*$ and $n-m$ ways to choose the distinguished vertex in it (the distinguished vertex cannot lie in $B_r(T_n^*)$, since then $B_r^{\bullet}(T_n^*)$ would be the full $T_n^*$). Hence, we have $\mathbb{P} \big( B_r^{\bullet} (T_n^*) = t \big)=\frac{(n-m) \# \mathscr{T}_{n-m,p}}{n \# \mathscr{T}_{n-1,1}}$. When we let $r \to +\infty$, we have $n-m,p \to +\infty$ with $p=O \big( \sqrt{n-m} \big)$. By Lemma \ref{lemcombi1}, when $r$ goes to $+\infty$, the probability $\mathbb{P} \big( B_r^{\bullet} (T_n^*) = t \big)$ is equivalent to
\[\frac{2 \sqrt{3}}{\sqrt{\pi}} \lambda_c^m \times 12^p \sqrt{p} \exp \Big( - \frac{2p^2}{3(n-m)} \Big) \underset{r \to +\infty}{\sim} \exp \Big( - \frac{2p^2}{3(n-m)} \Big) \mathbb{P} \big( B_r^{\bullet}(\mathbb{T})=t \big) .\]
Hence, if we have chosen $A$ large enough, the following holds:
\[  (1-\delta) \mathbb{P} \big( B_r^{\bullet}(\mathbb{T})=t \big) \leq \mathbb{P} \big( B_r^{\bullet}(T_n^*) = t \big) \leq (1+\delta) \mathbb{P} \big( B_r^{\bullet}(T)=t \big), \]
as soon as $c_{\delta} r^2 \leq |\partial t| \leq C_{\delta} r^2$ and $c_{\delta} r^4 \leq |t| \leq C_{\delta} r^4$. But we know that $ B_r^{\bullet}(\mathbb{T})$ satisfies these assumptions with probability at least $1-\delta$. Hence, we can easily prove that, for $n \geq Ar^4$ and any set $B$ of finite maps, we have
\[ \big| \mathbb{P} \big( B_r^{\bullet}(\mathbb{T}) \in B \big) - \mathbb{P} \big( B_r^{\bullet}(T_n^*) \in B \big)\big| \leq 4 \delta.\]
This shows that, for $r$ large enough and $n \geq Ar^4$, the total variation distance between the distributions of $B_r^{\bullet}(\mathbb{T})$ and $B_r^{\bullet}(T_n^*)$ is less than $4\delta$, which proves the proposition.
\end{proof}

\begin{proof}[Proof of Proposition \ref{convergenceBP}]
We use Proposition \ref{coupling} with $2r$ instead of $r$. The metric spaces $B_r(T_n^*)$ and $B_r(\mathbb{T})$ (equipped with the induced distance) are measurable functions of respectively $B_{2r}^{\bullet}(T_n^*)$ and $B_{2r}^{\bullet}(\mathbb{T})$. Hence, Proposition \ref{coupling} still holds if we replace the maps $B_r^{\bullet}$ by the metric spaces $B_r$. The proof is now the same as the proof of Theorem 2 in \cite{CLGplane}, with two small modifications:
\begin{itemize}
\item
we deal with Gromov--Hausdorff--Prokhorov convergence and not only Gromov--Hausdorff, but this does not change anything in the details of the proof, see \cite{W15} for details,
\item
the constant factors are not the same: because of the factor $\frac{1}{3^{1/4}}$ in Theorem \ref{convergenceBM}, the measured spaces $\Big( \frac{1}{n^{1/4}} \mathbb{T}, \frac{1}{n}\mu_{\mathbb{T}} \Big)$ converge to $\big( \frac{1}{3^{1/4}}\p, \mu_{\p} \big)$. This has the same distribution as $\big( \p, 3 \mu_{\p} \big)$ by the scaling property of the Brownian plane.
\end{itemize}
\end{proof}

We can now prove the joint convergence of the first two marginals in Theorem \ref{joint}.

\begin{prop} \label{joint1}
We have the joint convergence
\[\Bigg( \frac{1}{n} \mathbb{T}, \Big( \frac{1}{n^4} |\overline{B_{rn}}(\mathbb{T})| \Big)_{r \geq 0} \Bigg) \xrightarrow[n \to +\infty]{(d)} \Bigg( \p, \Big( 3|\overline{B_r}(\p)| \Big)_{r \geq 0} \Bigg),\]
where the convergence of the first marginal is for the local GHP distance, and the second one for the Skorokhod topology.
\end{prop}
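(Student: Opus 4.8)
The plan is to upgrade the Gromov--Hausdorff--Prokhorov convergence of Proposition \ref{convergenceBP} to a joint statement that also tracks the hull volume process, by applying the deterministic hull-extraction result Proposition \ref{GHP} along the convergent sequence. First I would record the scaling normalization: Proposition \ref{convergenceBP} gives $\big(\frac{1}{n^{1/4}}\mathbb{T}, \frac{1}{3n}\mu_{\mathbb{T}}\big) \to (\p,\mu_{\p})$, so after substituting $n \mapsto n^4$ (and using that the local GHP topology is metrizable, hence convergence along $n^4$ follows from convergence along $n$ together with continuity of the limit in the scale parameter --- or more directly, just rerun Proposition \ref{coupling} with the scale $n$ in place of $n^{1/4}$, since $n > A r^4$ is exactly the hypothesis there), we get
\[ \Big( \tfrac{1}{n}\mathbb{T}, \tfrac{1}{3n^4}\mu_{\mathbb{T}} \Big) \xrightarrow[n\to\infty]{(d)} (\p,\mu_{\p}) \]
for the local GHP distance. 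By Skorokhod's representation theorem I would realize this convergence almost surely on a common probability space, so that $X_n := \frac{1}{n}\mathbb{T}$ (with measure $\frac{1}{3n^4}\mu_{\mathbb{T}}$) converges a.s.\ to $X := \p$ for the local GHP distance.

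Next I would check that the hypotheses of Proposition \ref{GHP} hold: $\p$ and the rescaled UIPT are one-ended length spaces (for the UIPT one works with $B_r^\bullet$, which is connected and one-ended, and the induced distance on it agrees with the ambient one for pairs of vertices in $\overline{B_{r/2}}$; alternatively one invokes known structural facts about $\mathbb{T}$ and $\p$); every non-empty open subset of $\p$ has positive $\mu_{\p}$-measure (a known property of the Brownian plane from \cite{CLGplane, CLGHull}); and the volume function $s \mapsto \mu_{\p}(\overline{B_s}(\p))$ is a.s.\ continuous --- this holds because, as recalled later in the paper, the perimeter and volume processes of the Brownian plane are pure-jump but the volume $|\overline{B_s}(\p)|$ is a.s.\ continuous and strictly increasing in $s$ (it is the process $V_s$ of \cite{CLGHull}, which has continuous paths); in any case the set of $s$ at which it is discontinuous is a.s.\ countable, and for fixed $r$ it is a.s.\ not among them, which is enough after a Fubini argument to handle all rational $r$ simultaneously. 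Then Proposition \ref{GHP} yields, for each fixed $r$, that $\overline{B_r}(X_n) \to \overline{B_r}(X)$ for the GHP distance, and in particular $\frac{1}{3n^4}|\overline{B_{rn}}(\mathbb{T})| = \mu_n(\overline{B_r}(X_n)) \to \mu_{\p}(\overline{B_r}(\p)) = |\overline{B_r}(\p)|$. This gives finite-dimensional convergence of $\big(\frac{1}{n^4}|\overline{B_{rn}}(\mathbb{T})|\big)_r$ to $\big(3|\overline{B_r}(\p)|\big)_r$, jointly with the GHP convergence of $\frac{1}{n}\mathbb{T}$, all on the Skorokhod common space.

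To promote finite-dimensional convergence of the volume process to Skorokhod convergence I would use a tightness / monotonicity argument: $r \mapsto |\overline{B_{rn}}(\mathbb{T})|$ is non-decreasing and càdlàg, and the limit $r\mapsto 3|\overline{B_r}(\p)|$ is a.s.\ continuous, so convergence of the increasing functions at a dense set of points (all rationals, plus using that the limit is continuous so no "mass at a fixed point" issues arise) forces convergence in the Skorokhod $J_1$ topology, in fact locally uniformly; this is a standard fact for monotone functions converging to a continuous monotone limit (Dini-type argument). Since this reasoning is carried out on the Skorokhod-representation space where the first marginal also converges a.s., the two convergences are automatically joint, and transferring back to distributions proves the proposition. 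The main obstacle I anticipate is the verification that Proposition \ref{GHP} genuinely applies --- specifically, confirming the one-endedness/length-space hypotheses for the rescaled UIPT (where the natural object is the map $B_r^\bullet$ rather than an intrinsic length space, so one must be a little careful about which metric is in play and invoke the $\overline{B_r}$-is-a-function-of-$B_{2r}^\bullet$ observation from Section 2.1) and the a.s.\ continuity of the Brownian plane's volume-of-hull function at the fixed radius $r$; once these are in hand, the rest is bookkeeping with Skorokhod representation and monotone convergence.
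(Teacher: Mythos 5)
Your overall plan coincides with the paper's: Skorokhod representation of the convergence in Proposition \ref{convergenceBP}, verification of the hypotheses of Proposition \ref{GHP} for $\p$, and application of Proposition \ref{GHP} to get the finite-dimensional convergence of the hull volumes jointly with the GHP convergence of the space. The paper's resolution of the length-space hypothesis is a bit cleaner than what you sketch: it replaces $\mathbb{T}$ by its cable graph $\mathbb{T}^e$ (edges of length $1$), whose balls are within GHP distance $1$ of those of $\mathbb{T}$, so the convergence and the hypotheses of Proposition \ref{GHP} both transfer immediately; this avoids the awkwardness of reasoning through $B_r^\bullet$.

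However, there is a genuine gap in your final step. You assert that $r\mapsto|\overline{B_r}(\p)|$ is a.s.\ continuous and that the Dini-type argument for monotone functions converging to a continuous monotone limit promotes finite-dimensional convergence to Skorokhod convergence. This is false: the volume process $V$ of the Brownian plane is a pure-jump increasing process (this is exactly the content of Theorem 1.3 of \cite{CLGHull}, recalled in Section 2.3 of this paper, where $V_r=\sum_{s_i\le r}\xi_i(\Delta P_{s_i})^2$), so it has a dense set of jump times. Your own sentence ``the perimeter and volume processes of the Brownian plane are pure-jump but the volume \ldots\ has continuous paths'' is internally contradictory, and the second half is wrong. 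Because the limit has jumps, pointwise convergence of nondecreasing functions at the continuity points of the limit does \emph{not} imply $J_1$ convergence (e.g.\ two nearby unit jumps do not $J_1$-converge to a single jump of size $2$); so the Dini argument cannot close the proof. The paper instead proves Skorokhod convergence the standard way: it establishes tightness of the rescaled volume process by quoting Theorem 2 of \cite{CLGpeeling}, and combines it with the finite-dimensional convergence. What \emph{is} true and what the paper uses for hypothesis (iii) of Proposition \ref{GHP} is that for each \emph{fixed} $r$ the process is a.s.\ continuous at $r$, deduced from the continuity in $r$ of $\mathbb{E}[e^{-|\overline{B_r}(\p)|}]$ (Theorem 1.4 of \cite{CLGHull}) together with the fact that all jumps are positive; your ``Fubini + countable jump set'' argument only gives continuity at Lebesgue-a.e.\ $r$ and would need a scaling-invariance input to cover every fixed $r$, so the paper's Laplace-transform argument is both necessary and cleaner.
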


We will deduce Proposition \ref{joint1} from Proposition \ref{convergenceBP} thanks to the second point of Proposition \ref{GHP}. Let us check this carefully.

\begin{proof}[Proof of Proposition \ref{joint1}]
By the Skorokhod representation theorem, we may assume the convergence in Proposition \ref{convergenceBP} is almost sure. Theorem 1.4 of \cite{CLGHull} computes $\mathbb{E} \big[ e^{-|\overline{B_r}( \p)|}\big]$. In particular, it is a continuous function of $r$. Since the process $\big( |\overline{B_r}( \p)| \big)_{r \geq 0}$ has only positive jumps, it means that for all $r \geq 0$, it is almost surely continuous at $r$. Finally, the Brownian plane is defined in \cite{CLGplane} as a quotient of $\R$. This means that there is a continuous surjection from $\R$ to $\p$, and the volume measure on $\p$ is the push-forward of the Lebesgue measure under this surjection. The inverse-image of a non-empty open subset of $\p$ is a non-empty open subset of $\R$. Hence, it has positive measure, which means that any non-empty open subset of $\p$ has positive measure.

Instead of $\mathbb{T}$, we can consider the metric space $\mathbb{T}^e$, which is the union of all the vertices and edges of $\mathbb{T}$. It is equipped with the metric that makes it a geodesic space in which all edges have length $1$. We also equip this space with the counting measure $\mu_{\mathbb{T}}$ on the set of vertices. We have $d_{GHP}(B_r(\mathbb{T}),B_r(\mathbb{T}^e)) \leq 1$ for all $r$, so $d_{LGHP} \Big( \Big( \frac{1}{n^{1/4}} \mathbb{T}, \frac{1}{3n}\mu_{\mathbb{T}} \Big), \Big( \frac{1}{n^{1/4}} \mathbb{T}^e, \frac{1}{3n}\mu_{\mathbb{T}} \Big)\Big) \leq \frac{2}{n^{1/4}}$. Hence, Proposition \ref{convergenceBP} still holds if we replace $\mathbb{T}$ by $\mathbb{T}^e$. The sequence $\Big( \frac{1}{n}\mathbb{T}^e, \frac{1}{n^4} \mu_{\mathbb{T}} \Big)_{n \geq 0}$ satisfies the assumptions of Proposition \ref{GHP}, so for all $(r_1, \dots,r_k)\in \left( \R^+ \right)^k$ we have
\[ \forall i \in [\![1,k]\!], \, \frac{1}{3n^4} \mu_{\mathbb{T}} \big( \overline{B_{r_i n}}(\mathbb{T}) \big) \xrightarrow[n \to +\infty]{a.s.} \mu_{\p} \big( \overline{B_{r_i}}(\p) \big).\]
This gives the joint convergence of $\frac{1}{n} \mathbb{T}$ and of the finite-dimensional marginals of the process of volumes.

Hence, to complete the proof of Proposition \ref{joint1}, we only need tightness. The tightness of the first marginal is given by Proposition \ref{convergenceBP}. On the other hand, Theorem 2 of \cite{CLGpeeling} shows that the volume process converges. In particular, it is tight. This concludes the proof.
\end{proof}

\subsection{Joint convergence of the perimeter process}

The goal of this subsection is to prove the joint convergence of the last marginal in Theorem \ref{joint}.

\begin{proof}[Proof of Theorem \ref{joint}]
By Proposition \ref{joint1}, the first two marginal converge in distribution to $\Big( \p, \big( 3 V_r \big)_{r \geq 0} \Big)$, where $V_r=|\overline{B_r} (\p)|$.  We also know by Theorem $2$ of \cite{CLGpeeling} that $\Big( \Big( \frac{1}{n^2} |\partial \overline{B_{rn}}(\mathbb{T})| \Big)_{r \geq 0} \Big)_{n \geq 0}$ converges, so it is tight, so the triplet is tight. Hence, it is enough to prove uniqueness of the limit. Let $(n_k)$ be a subsequence along which it converges in distribution to a triplet
\[\Big( \p, \big( 3 V_r \big)_{r \geq 0}, \Big( \frac{3}{2} \widetilde{P}_r \Big)_{r \geq 0} \Big),\]
where $\widetilde{P}=\big( \widetilde{P}_r \big)_{r \geq 0}$ is a càdlàg process. We also write $P_r=|\partial \overline{B_r} (\p)|$, and we want to show $\widetilde{P}=P$.

On the one hand, Theorem 1.3 of \cite{CLGHull} describes the joint distribution of $P$ and $V$. There is a measurable enumeration $(s_i)$ of the jumps of $P$ and an i.i.d. sequence $(\xi_i)$ of variables with distribution $\nu(\mathrm{d}x)=\frac{e^{-1/2x}}{\sqrt{2 \pi x^5}} \mathbbm{1}_{x>0}\,\mathrm{d}x$ such that, for all $r \geq 0$, we have
\[V_r= \sum_{s_i \leq r} (\Delta P_{s_i})^2 \xi_i.\]
On the other hand, Theorem 2 of \cite{CLGpeeling} shows that the second and third marginals of Theorem \ref{joint} converge, and identifies the limit. Hence, it gives the distribution of the couple $(\widetilde{P}_r, V_r)_{r \geq 0}$ (see Section $6.1$ of \cite{CLGpeeling} for the computation of the constants for type-I triangulations). We get
\[(\widetilde{P}_r, V_r)_{r \geq 0}\overset{(d)}{=} (P_r, V_r)_{r \geq 0}.\]
%which means that there is an i.i.d. sequence $(\widetilde{\xi}_i)$ with distribution $\nu$ such that
%\[V_r= \sum_{\widetilde{s_i} \leq r} \widetilde{\xi}_i (\Delta \widetilde{P}_{\widetilde{s_i}})^2,\]
%where $(\widetilde{s_i})$ is a measurable enumeration of the jumps of $\widetilde{P}$.

To prove that $\widetilde{P}_r=P_r$, we show that it is possible to "track back" $P_r$ from the process $(V_r)_{r \geq 0}$, which is done in the following lemma. We will need the following notation: for a nondecreasing function $f$ and $a,b,h \in \R^+$ with $a \leq b$, we write $N_a^b(f, h)$ for the number of jumps of $f$ in $[a,b]$ of height at least $h$.

\begin{lem}\label{eta}
For all $r \geq 0$, we have
\[ \lim_{\delta \to 0} \delta^{-1} \lim_{\eps \to 0} \eps^{3/4} N_r^{r+\delta} (V, \eps) = c P_r\]
almost surely, where $c=\frac{2^{1/4}}{\pi \sqrt{3}} \Gamma \big( \frac{3}{4} \big)$.
\end{lem}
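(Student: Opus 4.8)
The plan is to reconstruct $P_r$ from the volume process by exploiting the structure $V_r = \sum_{s_i \le r} \xi_i (\Delta P_{s_i})^2$, where the $\xi_i$ are i.i.d. with density $\nu(\mathrm{d}x) = \frac{e^{-1/2x}}{\sqrt{2\pi x^5}}\mathbbm 1_{x>0}\mathrm{d}x$. Each jump of $P$ at time $s_i$ of height $|\Delta P_{s_i}|$ produces a jump of $V$ at the same time of height $\xi_i (\Delta P_{s_i})^2$. So counting jumps of $V$ in $[r,r+\delta]$ of height at least $\eps$ amounts to counting pairs $(s_i, \xi_i)$ with $s_i \in [r,r+\delta]$ and $\xi_i \ge \eps/(\Delta P_{s_i})^2$. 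As $\eps \to 0$, only the tail behaviour of $\nu$ matters: $\nu\big((t,\infty)\big) \sim \frac{2}{\sqrt{2\pi}} t^{-3/2}$ as $t \to \infty$ (since $\int_t^\infty x^{-5/2}\mathrm dx = \tfrac 23 t^{-3/2}$ and $e^{-1/2x}\to 1$), so $\P\big(\xi_i (\Delta P_{s_i})^2 \ge \eps\big) = \nu\big((\eps/(\Delta P_{s_i})^2,\infty)\big) \sim \frac{2}{\sqrt{2\pi}}\,\eps^{-3/2}|\Delta P_{s_i}|^{3}$.

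First I would condition on the process $P$ (equivalently on the point configuration $\{(s_i, \Delta P_{s_i})\}$) and treat the $\xi_i$ as the only remaining randomness. Then $N_r^{r+\delta}(V,\eps) = \sum_{s_i \in [r,r+\delta]} \mathbbm 1\{\xi_i \ge \eps/(\Delta P_{s_i})^2\}$ is a sum of independent Bernoulli variables. Multiplying by $\eps^{3/4}$ and letting $\eps \to 0$: here one must be careful because $\eps^{3/4}$ times a sum whose individual success probabilities decay like $\eps^{3/2}$ does not converge to its mean (which decays like $\eps^{3/2}$). The correct reading is that the limit in $\eps$ should be understood via the a.s.\ divergence rate: $\eps^{3/2} N_r^{r+\delta}(V,\eps) \to \frac{2}{\sqrt{2\pi}} \sum_{s_i \in [r,r+\delta]} |\Delta P_{s_i}|^3$ a.s.\ — wait, this requires reexamination; in fact I expect the exponent $3/4$ and the Gamma factor $\Gamma(3/4)$ to come from a more delicate source, namely that the relevant count is dominated not by summing over existing jumps of $P$ but by the \emph{single largest} contributing $\xi_i$, or by a stable-type fluctuation. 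Concretely, one should compute, conditionally on $P$, the Laplace transform $\E\big[\exp(-u\,\eps^{3/4} N_r^{r+\delta}(V,\eps))\mid P\big] = \prod_{s_i \in [r,r+\delta]} \big(1 - (1-e^{-u\eps^{3/4}})\,\nu(\eps/(\Delta P_{s_i})^2,\infty)\big)$ and expand as $\eps \to 0$. This should converge to $\exp\big(-\mathrm{cst}\cdot u^{?}\sum |\Delta P_{s_i}|^{?}\big)$, identifying a (conditionally) stable law whose normalising constant involves $\Gamma(3/4)$ via the standard formula $\int_0^\infty (1-e^{-v})v^{-1-\alpha}\mathrm dv = \Gamma(1-\alpha)/\alpha$ with $\alpha = 3/4$...

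Actually the clean route: with $\alpha = 3/4$, the probability $\nu(\eps/(\Delta P_{s_i})^2,\infty) \sim \frac{2}{\sqrt{2\pi}}\eps^{-3/2}|\Delta P_{s_i}|^3$ and the quantity $\eps^{3/4}N$ behaving like an $\alpha$-stable subordinator evaluated at $\sum |\Delta P_{s_i}|^3$ is inconsistent with the exponents; I would instead reexamine whether $N_r^{r+\delta}(V,\eps)$ counts jumps of $V$ above $\eps$, and each such jump, being $\xi_i(\Delta P_{s_i})^2$, is above $\eps$ with the stated probability, so $\E[N \mid P] \sim \frac{2}{\sqrt{2\pi}}\eps^{-3/2}\sum|\Delta P_{s_i}|^3$ and hence $\eps^{3/2}N \to \frac{2}{\sqrt{2\pi}}\sum|\Delta P_{s_i}|^3$ by a conditional law of large numbers (the number of jumps of $P$ in $[r,r+\delta]$ is a.s.\ infinite, so the LLN applies). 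Then dividing by $\delta$ and sending $\delta \to 0$, $\frac 1\delta\sum_{s_i\in[r,r+\delta]}|\Delta P_{s_i}|^3 \to$ the "density of cubed jumps" of $P$ at $r$, which by the explicit Lévy-type description of $P$ from \cite{CLGHull} should equal a constant times $P_r$. Matching the product of constants to $c = \frac{2^{1/4}}{\pi\sqrt 3}\Gamma(3/4)$ pins down the normalisation; the exponent $\eps^{3/4}$ rather than $\eps^{3/2}$ then indicates the count actually behaves like $N^{1/2}$-type or that the jump heights enter quadratically in a way I have to track, so \textbf{the main obstacle} is getting the double scaling limit (order of limits $\eps \to 0$ then $\delta \to 0$) and the exact constant right, which requires carefully combining the precise tail asymptotics of $\nu$, a conditional strong law of large numbers for the thinned jump count, and the explicit description of the jump measure of the perimeter process $P$ of the Brownian plane from \cite{CLGHull}. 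Once Lemma~\ref{eta} holds, applying it to both $(V,P)$ and $(V,\widetilde P)$ — which have the same joint law — forces $\widetilde P_r = P_r$ a.s.\ for each fixed $r$, and right-continuity upgrades this to $\widetilde P = P$ as processes, completing the proof of Theorem~\ref{joint}. \hfill$\square$
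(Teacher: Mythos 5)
There is a genuine gap: the asymptotic on which your whole computation rests is misapplied. Conditionally on $P$, you write $\P\big(\xi_i(\Delta P_{s_i})^2\geq \eps\,\big|\,P\big)=\nu\big((\eps/(\Delta P_{s_i})^2,+\infty)\big)$ and then use the tail estimate $\nu((t,+\infty))\sim \mathrm{cst}\cdot t^{-3/2}$, but that estimate is valid as $t\to+\infty$, whereas here the argument $\eps/(\Delta P_{s_i})^2$ tends to $0$ for every \emph{fixed} jump, so each such probability tends to $1$, not to $\mathrm{cst}\cdot\eps^{-3/2}|\Delta P_{s_i}|^3$. Consequently the claimed conditional law of large numbers $\eps^{3/2}N_r^{r+\delta}(V,\eps)\to \mathrm{cst}\sum_{s_i\in[r,r+\delta]}|\Delta P_{s_i}|^3$ is false (that normalisation sends the count to $0$): the count is dominated by the infinitely many jumps of $P$ of size of order $\sqrt{\eps}$ and smaller, whose abundance is governed by the $3/2$-stable small-jump intensity of $P$, and it is precisely this interplay (tail index $3/2$ in the jump size $x$, entering through $x\mapsto x^2$) that produces the rate $\eps^{-3/4}$ and the constant involving $\Gamma(3/4)$ — the exponent you flag as "the main obstacle" is not an artefact to be explained away but the heart of the lemma. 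The last step you propose, $\delta^{-1}\sum_{s_i\in[r,r+\delta]}|\Delta P_{s_i}|^3\to\mathrm{cst}\cdot P_r$, is also not how $P_r$ is recovered: since $\int_1^{+\infty}x^3\,x^{-5/2}\,\mathrm{d}x=+\infty$, this functional is heavy-tailed and its typical value over a window of length $\delta$ is governed by the largest jump there (of order $\delta^{2/3}$), so $\delta^{-1}\sum_{s_i\in[r,r+\delta]}|\Delta P_{s_i}|^3$ degenerates rather than converging to a multiple of $P_r$.

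The missing idea, which is how the paper proceeds, is to undo the Lamperti time change rather than condition on $P$ in real time. Writing $(P_r,V_r)_{r\geq0}\overset{(d)}{=}(S^+_{\tau_r},Q^+_{\tau_r})_{r\geq0}$, where $S^+$ is the conditioned spectrally negative $3/2$-stable process, $Q^+_t=\sum_{t_i\leq t}\xi'_i(\Delta S^+_{t_i})^2$ and $\tau_r=\inf\{s\geq0:\int_0^s\mathrm{d}u/S^+_u\geq r\}$, the analogous process $Q$ built from the unconditioned $S$ is a subordinator whose Lévy measure $\sigma$ is the image of $\mu\otimes\nu$ under $(x,y)\mapsto x^2y$; one computes $\sigma([\eps,+\infty))=c\,\eps^{-3/4}$ \emph{exactly}, integrating the full law $\nu$ (not only its tail) against the stable measure $\mu$ — this is where $\Gamma(3/4)$ arises, via $\int_0^{+\infty}y^{3/4}\nu(\mathrm{d}y)$. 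A law of large numbers for this Poisson structure gives $\eps^{3/4}N_a^b(Q,\eps)\to c(b-a)$ a.s., which transfers to $Q^+$ by absolute continuity on $[a,b]$, $0<a<b$. Since $N_r^{r+\delta}(V,\eps)=N_{\tau_r}^{\tau_{r+\delta}}(Q^+,\eps)$, the inner limit equals $c(\tau_{r+\delta}-\tau_r)$, and $P_r$ enters only through the derivative of the time change, $\delta^{-1}(\tau_{r+\delta}-\tau_r)\to S^+_{\tau_r}=P_r$ — not through any "density of cubed jumps" of $P$. Your closing paragraph (using the lemma to force $\widetilde P=P$) matches the paper, but the proof of the lemma itself needs to be rebuilt along these lines.
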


Once this lemma is proved, Theorem \ref{joint} follows easily. Indeed, since $(\widetilde{P}, V) \overset{(d)}{=} (P, V)$, for any $r \geq 0$ the variables $\widetilde{P}_r$ and $P_r$ are both the a.s. limits of the same quantity. Hence, almost surely, $\widetilde{P}_r=P_r$ for every $r \in \mathbb{Q}^+$. Since $\widetilde{P}$ and $P$ are càdlàg we have $\widetilde{P}=P$ a.s. Hence, the sequence of triplets has only one subsequential limit, which proves the theorem.
\end{proof}

\begin{rem}
Note that Proposition 1.1 of \cite{CLGHull} provides another way to "read" $P$ on the measured metric space $\p$. However, it involves the volumes of the balls $B_{r+\eps}$, which are not given by the process $V$. Our lemma is also quite similar to the main theorem of \cite{cactus2}, although much easier to prove.
\end{rem}

\begin{proof}[Proof of Lemma \ref{eta}]
Let $S^+$ be the stable spectrally negative Lévy process of index $\frac{3}{2}$ conditioned to stay positive. We normalize it in such a way that its characteristic exponent is $\psi(\lambda)=\sqrt{8/3}\,\lambda^{3/2}$. Let also $(t_i)$ be a measurable enumeration of the jumps of $S^+$, and let $(\xi'_i)$ be a sequence of i.i.d. random variables with distribution $\nu$. We write
\[Q^+_r=\sum_{t_i \leq r} \xi'_i (\Delta S^+_{t_i})^2.\]

We first show that almost surely, for any $0<a<b$, we have
\begin{equation}\label{qplus}
\lim_{\eps \to 0} \eps^{3/4} N_a^b(Q^+, \eps)=c(b-a).
\end{equation}
By monotonicity, it is enough to prove it for $a,b \in \mathbb{Q}_+^*$. Hence, it is enough to prove it for fixed $a$ and $b$. Let $Q$ be the same process as $Q^+$ but constructed from a non-conditioned stable Lévy process $S$ instead of $S^+$. Then $Q$ is a subordinator whose Lévy measure $\sigma$ is the image of $\mu \otimes \nu$ under $(x,y) \to x^2 y$, where $\mu$ is the Lévy measure of $S$. An easy computation shows that $\sigma( [ \eps, +\infty [)=c \eps^{-3/4}$ for all $\eps$. Hence, equation \eqref{qplus} for $Q$ instead of $Q^+$ follows from a law of large numbers. But since $S^+$ is absolutely continuous with respect to $S$ on $[a,b]$, equation \eqref{qplus} also holds for $Q^+$.

We now recall the law of $(P, V)$ as described in Section 4.4 of \cite{CLGpeeling}. It has the same distribution as $(S^+_{\tau_r}, Q^+_{ \tau_r})_{r \geq 0}$, where $\tau_r=\inf \{ s \geq 0 | \int_0^s \frac{\mathrm{d}u}{S^+_u}\geq r \}$ for every $r \geq 0$. Hence, we have $N_r^{r+\delta} (V,  \eps)=N_{\tau_r}^{\tau_{r+\delta}} (Q^+, \eps)$. By \eqref{qplus}, for any $r$ and $\delta$ we have (since a.s. \eqref{qplus} holds for any $0<a<b$, we can take $a$ and $b$ random)
\[\lim_{\eps \to 0} \eps^{3/4} N_{\tau_r}^{\tau_{r+\delta}} (Q^+, \eps)=c(\tau_{r+\delta}-\tau_r).\]
Now, it is easy to see, by the right-continuity of $S^+$ at $\tau_r$, that $\delta^{-1} (\tau_{r+\delta}-\tau_r) \xrightarrow[\delta \to 0]{a.s.} S^+_{\tau_r}=P_r$. This finishes the proof.
\end{proof}

\subsection{The hyperbolic Brownian plane}

The goal of this section is to prove Theorem \ref{mainthm}. We will write
\[\varphi(p, v)=e^{-2v}e^p \int_0^1 e^{-3px^2}\mathrm{d}x\]
in the whole proof of the theorem. Moreover, if $t$ is a triangulation with a simple hole of perimeter $p'$ and with $v'$ vertices in total and $\lambda \in (0,\lambda_c]$, we write
\begin{equation} \label{philambdaprime}
\varphi_{\lambda}(p',v')=\frac{\P \left( t \subset \mathbb{T}_{\lambda_n} \right)}{\P \left( t \subset \mathbb{T} \right)}.
\end{equation}
Since $\mathbb{T}_{\lambda_n}$ and $\mathbb{T}$ are Markovian, this only depends on $p'$ and $v'$ and not on $t$.

\begin{prop}\label{density}
Let $(\lambda_n)$ be a sequence of numbers in $\big( 0, \lambda_c \big]$ that satisfies \eqref{lambdaspeed}. Let $r>0$ and let $(p'_n)$ and $(v'_n)$ be two sequences of positive integers such that $\frac{p'_n}{n^2} \to \frac{3}{2}p$ and $\frac{v'_n}{n^4} \to 3 v$. Then
\[\varphi_{\lambda_n}(p'_n, v'_n) \xrightarrow[n \to +\infty]{} \varphi (p,v).\]
\end{prop}

\begin{proof}
This is just a matter of gathering our estimates in Section 1.1 and 1.2 together. Let us proceed: for every $n$, let $h_n \in \big( 0, \frac{1}{4} \big]$ be such that $\lambda_n=\frac{h_n}{(1+8h_n)^{3/2}}$. It is easy to check that
\begin{equation}\label{hn}
h_n=\frac{1}{4}-\frac{1}{2 n^2}+o \Big( \frac{1}{n^2} \Big).
\end{equation}

We know from \eqref{Cformula} that
\begin{eqnarray*}
\varphi_{\lambda_n}(p'_n,v'_n) &\underset{\mathrm{Def. \,} \ref{defMarkov}}{=}& \frac{\lambda_n^{v_n} C_{p_n}(\lambda_n)} {\lambda_c^{v_n} C_{p_n}(\lambda_c)}\\
&\underset{\eqref{critical}}{=}& \Big( \frac{\lambda_n}{\lambda_c} \Big)^{v_n} \frac{p_n!^2}{2 \sqrt{3} \times 3^{p_n} p_n (2p_n)!} \frac{1}{\lambda_n} \Big(8+\frac{1}{h_n} \Big)^{p_n-1} \sum_{q=0}^{p_n-1} \binom{2q}{q} h_n^{q}\\
&=& \Big( \frac{\lambda_n}{\lambda_c} \Big)^{3v n^4+o(n^4)} \Big(\frac{2}{3}+\frac{1}{12h_n} \Big)^{p_n-1} \frac{\lambda_c}{\lambda_n} \frac{4^{p_n} p_n !^2}{2 p_n (2p_n)!} \sum_{q=0}^{p_n-1} \binom{2q}{q} h_n^{q}.
\end{eqnarray*}
The first factor converges to $e^{-2v}$ because $\frac{\lambda_n}{\lambda_c}=1-\frac{2}{3n^4}+o \Big( \frac{1}{n^4} \Big)$. The second factor is also easy to estimate:
\[\Big(\frac{2}{3}+\frac{1}{12h_n} \Big)^{p_n-1}=\Big( 1+\frac{2}{3n^2}+o \Big( \frac{1}{n^2} \Big) \Big)^{\frac{3p}{2}n^2+o(n^2)} \xrightarrow[n \to +\infty]{} e^{p}.\]
The third one converges to one. By the Stirling formula, we have $\frac{4^{p_n}  p_n!^2}{2p_n (2p_n)!} \sim \frac{\sqrt{\pi}}{2 \sqrt{p_n}}$. Finally, Lemma \ref{lemcombi2} gives an asymptotic equivalent of the last factor and we are done.
\end{proof}

The last proposition is more or less equivalent to vague convergence. We now need to show that no mass "escapes" at infinity, i.e. that the total mass of the limit measure is $1$.

\begin{lem}\label{mass}
Recall that $P_r=|\partial \overline{B_r}(\p)|$ and $V_r=|\overline{B_r}(\p)|$. For every $r \geq 0$, we have
\[\E \big[ \varphi(V_r, P_r) \big]=1.\]
\end{lem}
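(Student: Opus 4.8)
The plan is to evaluate $\E\big[\varphi(V_r,P_r)\big]$ directly from the explicit law of the perimeter and volume processes of the Brownian plane recalled above from \cite{CLGHull}. Writing out the density (cf.\ \eqref{densityph}), we must show $\E\big[e^{-2V_r}e^{P_r}\int_0^1 e^{-3P_rx^2}\,\mathrm{d}x\big]=1$; exchanging the finite $x$-integral with the expectation, this is equivalent to
\[ \int_0^1 \E\big[e^{-2V_r+(1-3x^2)P_r}\big]\,\mathrm{d}x=1, \]
so it suffices to compute the joint Laplace transform $F_r(\mu,\lambda):=\E\big[e^{-\mu V_r-\lambda P_r}\big]$ for $\mu\ge 0$ and $\lambda\in(-2,1)$, substitute $\mu=2$, $\lambda=3x^2-1$, and integrate over $x\in[0,1]$. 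Note that $F_r(2,\lambda)$ is finite even for $\lambda<0$: when $P_r$ is large it comes from a large jump $\Delta P_{s_i}$, so $V_r=\sum_i\xi_i(\Delta P_{s_i})^2$ is then much larger and $e^{-2V_r}$ compensates $e^{-\lambda P_r}$.

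To obtain $F_r(\mu,\lambda)$ I would first integrate out the volume. Conditionally on $(P_s)_{s\ge 0}$ one has $V_r=\sum_{s_i\le r}\xi_i(\Delta P_{s_i})^2$ with $(\xi_i)$ i.i.d.\ of law $\nu$, and differentiating the classical identity $\int_0^{\infty}x^{-3/2}e^{-A/x-Bx}\,\mathrm{d}x=\sqrt{\pi/A}\,e^{-2\sqrt{AB}}$ with respect to $A$ gives $\int_0^{\infty}e^{-\theta x}\,\nu(\mathrm{d}x)=(1+\sqrt{2\theta})\,e^{-\sqrt{2\theta}}$ for $\theta\ge 0$. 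Hence
\[ \E\big[e^{-\mu V_r}\mid P\big]=\prod_{s_i\le r}\big(1+\sqrt{2\mu}\,|\Delta P_{s_i}|\big)\,e^{-\sqrt{2\mu}\,|\Delta P_{s_i}|}, \]
a convergent product since $\sum_{s_i\le r}(\Delta P_{s_i})^2<\infty$, so that $F_r(\mu,\lambda)=\E\big[e^{-\lambda P_r}\prod_{s_i\le r}(1+\sqrt{2\mu}|\Delta P_{s_i}|)\,e^{-\sqrt{2\mu}|\Delta P_{s_i}|}\big]$ is a functional of the perimeter process alone. I would then use the description of $P$ from \cite{CLGHull}: up to the time change $r\mapsto\tau_r$ of the proof of Lemma~\ref{eta}, $P_r=S^+_{\tau_r}$, where $S^+$ is the spectrally negative $3/2$-stable Lévy process of exponent $\sqrt{8/3}\,\lambda^{3/2}$ conditioned to stay positive, and the jumps of $P$ on $[0,r]$ are those of $S^+$ on $[0,\tau_r]$. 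The expectation above is then exactly the kind of joint exponential functional computed in \cite{CLGHull,CLGpeeling}: multiplying the jump intensity of $S^+$ by $(1+\sqrt{2\mu}|x|)e^{-\sqrt{2\mu}|x|}$ and performing the Esscher shift by $\lambda$ turns the branching mechanism $\sqrt{8/3}\,\lambda^{3/2}$ into one of the form $\sqrt{8/3}\,\lambda\sqrt{\lambda+c}$ --- for $\mu=2$ precisely the mechanism $\psi^h$ of Theorem~\ref{identification} --- while the $h$-transform and the time change are carried along. One then reads $F_r(2,3x^2-1)$ off the resulting explicit formula and checks by a direct computation that $\int_0^1 F_r(2,3x^2-1)\,\mathrm{d}x=1$.

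The delicate part is this last computation: identifying the modified branching mechanism and correctly transporting the conditioning and the time change, i.e.\ checking that the biasing by $\varphi$ corresponds cleanly to the passage $\psi\rightsquigarrow\psi^h$. Equivalently --- this is the viewpoint announced in the abstract --- one should recognize $\varphi(V_r,P_r)$ as the value at time $r$ of an explicit martingale for the filtration generated by the hull process, so that $\E\big[\varphi(V_r,P_r)\big]=\varphi(V_0,P_0)=1$; but establishing that martingale property amounts to the same computation with the generator of the Markov process $(P,V)$. The two preliminary reductions --- the Laplace transform of $\nu$, and the passage to a statement about $P$ alone --- are routine. Finally, the inequality $\E\big[\varphi(V_r,P_r)\big]\le 1$ comes for free: by Proposition~\ref{density}, Theorem~\ref{joint} and Skorokhod's representation theorem the exact discrete ratio $\P\big(B^{\bullet}_{rn}(\mathbb{T}_{\lambda_n})=t\big)/\P\big(B^{\bullet}_{rn}(\mathbb{T})=t\big)$ converges almost surely to $\varphi(V_r,P_r)$, while having expectation exactly $1$ under the law of $B^{\bullet}_{rn}(\mathbb{T})$ (it sums to $1$ over the admissible $t$); so Fatou's lemma gives the upper bound, and only the no-escape-of-mass inequality $\ge 1$ genuinely requires the computation above.
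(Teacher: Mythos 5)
Your proposal is conceptually coherent but takes a genuinely different, and considerably longer, route than the paper's proof, and you do not carry it through. The paper proves Lemma~\ref{mass} by a direct computation with two \emph{explicit} formulas from \cite{CLGHull}: first the conditional Laplace transform $\E[e^{-2V_r}\mid P_r=\ell]$ (their Theorem~1.4), which is already a deterministic function of $\ell$ alone (an explicit multiple of $e^{-c(r)\ell}$), and then the Laplace transform of $P_r$ (their Theorem~1.2). After Fubini the whole thing reduces to a one-line calculus identity, namely that $\int_0^1(1+a x^2)^{-3/2}\,\mathrm{d}x=(1+a)^{-1/2}$.

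Your plan instead integrates $V_r$ out conditionally on the \emph{entire path} of $P$, producing the jump-functional $\prod_{s_i\le r}(1+\sqrt{2\mu}\,|\Delta P_{s_i}|)\,e^{-\sqrt{2\mu}\,|\Delta P_{s_i}|}$. This is correct (and your formula $\int_0^\infty e^{-\theta x}\,\nu(\mathrm{d}x)=(1+\sqrt{2\theta})e^{-\sqrt{2\theta}}$ checks out), but it leaves you with a functional of the whole perimeter path rather than of $P_r$ alone, so you can no longer invoke a one-dimensional Laplace transform; you are forced into the Lévy/Lamperti/Esscher-shift analysis. That is precisely the machinery the paper develops later, in Section~3.3, for the \emph{separate} purpose of Theorem~\ref{identification}: the martingale $M_t=e^{-S_t}\prod(1+2|\Delta S_{t_i}|)e^{-2|\Delta S_{t_i}|}$, the change from $\psi(u)=\sqrt{8/3}\,u^{3/2}$ to $\psi^h(u)=\sqrt{8/3}\,u\sqrt{u+3}$, and the transport of all this through the conditioning of $S$ to stay positive and the Lamperti time change. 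You correctly flag this as the delicate part, but you leave it as a gap, and it is not a small one: the paper's martingale lemma handles only the unconditioned Lévy process on a fixed time horizon, and one still has to carry the $h$-transform and the time change. So, as written, this is an incomplete proof that would, if finished, amount to re-deriving Section~3.3 of the paper.

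Two minor points. First, $\lambda=3x^2-1$ ranges over $[-1,2]$, not $(-2,1)$. Second, your closing Fatou observation is correct and mirrors exactly the Fatou argument in the proof of Theorem~\ref{mainthm}, where it is used to conclude \emph{given} this lemma; it yields the cheap inequality $\E[\varphi(P_r,V_r)]\le 1$, and you are right that only the lower bound genuinely needs a computation. Finally, note the paper's lemma statement has a harmless transposition typo ($\varphi(V_r,P_r)$ for $\varphi(P_r,V_r)$); both you and the paper's proof compute the intended quantity $\E\bigl[e^{-2V_r}e^{P_r}\int_0^1 e^{-3P_rx^2}\,\mathrm{d}x\bigr]$.
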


\begin{proof}
We use the expression of the Laplace transform of $(P_r,V_r)$ that is computed in \cite{CLGHull}. First, Theorem 1.4 of \cite{CLGHull} computes $\E \left[ e^{-\mu V_r} | P_r=\ell \right]$ for $\mu, \ell \geq 0$. We apply it with $\mu=2$:
\[\E \Big[ \exp ( -2V_r ) \Big|  P_r=\ell\Big]=2\sqrt{2} r^3 \frac{\cosh (\sqrt{2}r )}{\sinh^{3} (\sqrt{2}r)} \exp \Big(-\ell \Big(3 \coth^{2} (\sqrt{2}r)-2-\frac{3}{2r^2} \Big) \Big).\]

We now apply Fubini's theorem, and use the Laplace transform of $P_r$ given by Theorem 1.2 of \cite{CLGHull}:
\begin{eqnarray*}
\E \big[ \varphi(V_r, P_r) \big] &=& \E \Big[ \E \big[ \exp ( -2V_r) \big|  P_r \big] \exp \big( P_r \big) \int_0^1 \exp \big( -3P_r x^2\big) \mathrm{d}x\Big]\\
&=& 2\sqrt{2} r^3 \frac{\cosh (\sqrt{2}r )}{\sinh^{3} (\sqrt{2}r)} \int_0^1 \E \Bigg[ \exp \Big(-P_r \Big(3 \coth^{2} (\sqrt{2}r)-2-\frac{3}{2r^2} \Big) \Big) \exp \Big( (1-3x^2)P_r \Big) \Bigg] \mathrm{d}x \\
&=& 2\sqrt{2} r^3 \frac{\cosh (\sqrt{2}r )}{\sinh^{3} (\sqrt{2}r)} \int_0^1 \Bigg(1+\frac{2r^2}{3} \Big(3 \coth^{2} (\sqrt{2}r)-2-\frac{3}{2r^2}-1+3x^2 \Big) \Bigg)^{-3/2} \mathrm{d}x\\
&=& 2\sqrt{2} r^3 \frac{\cosh (\sqrt{2}r )}{\sinh^{3} (\sqrt{2}r)} \int_0^1 \Bigg(2r^2 \coth^{2} (\sqrt{2}r) -2r^2+2r^2 x^2  \Bigg)^{-3/2} \mathrm{d}x\\
&=& \frac{\cosh (\sqrt{2}r )}{\sinh^{3} (\sqrt{2}r)} \int_0^1 \Big( \frac{1}{\sinh^2 (\sqrt{2}r)} +x^2 \Big)^{-3/2} \mathrm{d}x\\
&=& \cosh (\sqrt{2}r) \int_0^1 \Big(1+\sinh^2 (\sqrt{2} r)x^2 \Big)^{-3/2} \mathrm{d}x\\
&=& \cosh (\sqrt{2}r) \frac{1}{\sqrt{1+\sinh^2 (\sqrt{2} r)}}\\
&=& 1,
\end{eqnarray*}
where at the end we used the fact that $(1+ax^2)^{-3/2}$ is the derivative of $\frac{x}{\sqrt{1+ax^2}}$.
\end{proof}

Our main theorem is now easy to prove.

\begin{proof}[Proof of Theorem \ref{mainthm}]
In this proof, if $X$ is a measured metric space, we will write $\frac{1}{n} X$ for the metric space obtained from $X$ by multiplying all distances by $\frac{1}{n}$ and the measure by $\frac{1}{3n^4}$.

Let $r > 0$ and let $\mathbb{T}$ be a type-I UIPT. Almost surely, the processes $P$ and $V$ have no jump at $r$ and at $2r$, so we have convergence of the one-dimensional marginal at $2r$ in Theorem \ref{joint}. For all $n \geq 1$, let $P'_n=|\partial B_{2rn}^{\bullet}(\mathbb{T})|$ and $V'_n=| B_{2rn}^{\bullet}(\mathbb{T})|$. By the Skorokhod representation theorem, we may assume, as $n \to +\infty$, the convergences
\begin{equation} \label{triple_convergence}
\left \{
\begin{array}{lcl}
 \frac{1}{n} \mathbb{T} &\xrightarrow[LGHP]{a.s.}& \p\\
\frac{1}{n^2} P'_n &\xrightarrow{a.s.}& \frac{3}{2} P_{2r}\\
\frac{1}{n^4} V'_n &\xrightarrow{a.s.}& 3 V_{2r}.
\end{array}
\right.
\end{equation}
We have already verified in the proof of Theorem \ref{joint1} that the assumptions of Proposition \ref{GHP} are satisfied. By the first point of Proposition \ref{GHP}, the convergence of $\frac{1}{n} \mathbb{T}$ to $\p$ implies
\begin{equation} \label{convergence_hulls}
\frac{1}{n} \overline{B_{rn}}(\mathbb{T}) \xrightarrow[GHP]{a.s.} \overline{B_r}(\p).
\end{equation}

Let  $\mathscr{G}$ denote the GHP space and let $f$ be a bounded, continuous function from $\mathscr{G}$ to $\R^+$. We recall that the space $\overline{B_{rn}}(\mathbb{T})$ is a measurable function of the map $B_{2rn}^{\bullet}(\mathbb{T})$. Hence, there is a function $\widetilde{f}$ (that depends on $n$) from the set of finite triangulations to $\mathbb{R}^+$ such that $f \Big( \frac{1}{n} \overline{B_{rn}} (t) \Big)=\widetilde{f} \big( B_{2rn}^{\bullet} (t) \big)$ for any one-ended, infinite triangulation $t$. Hence, by the definition \eqref{philambdaprime} of $\varphi_{\lambda}$, we can write
\begin{eqnarray*}
\E \Big[ f \Big( \frac{1}{n} \overline{B_{rn}} (\mathbb{T}_{\lambda_n}) \Big) \Big] &=& \E \Big[ \widetilde{f} \big( B_{2rn}^{\bullet} (\mathbb{T}_{\lambda_n}) \big) \Big]\\
&=& \E \Big[ \varphi_{\lambda_n}(P'_n, V'_n) \, \widetilde{f} \big( B_{2rn}^{\bullet} (\mathbb{T}) \big) \Big]\\
&=& \E \Big[ \varphi_{\lambda_n}(P'_n, V'_n) \, f \Big( \frac{1}{n} \overline{B_{rn}}(\mathbb{T}) \Big) \Big].
\end{eqnarray*}
By the convergences \eqref{triple_convergence} and \eqref{convergence_hulls}, Proposition \ref{density} and the continuity of $f$, the expression inside the expectation converges a.s. to $\varphi \big(P_{2r}, V_{2r} \big) f \big( \overline{B_r}(\p) \big)$. By Fatou, we have
\begin{equation}\label{fatou}
\liminf_n \E \Big[ f \Big( \frac{1}{n} \overline{B_{rn}} (\mathbb{T}_{\lambda_n}) \Big) \Big] \geq \E \Big[ \varphi \big( P_{2r}, V_{2r} \big) f \big( \overline{B_r}(\p) \big) \Big].
\end{equation}
Let $M \in \R^+$ be such that $f \leq M$. By Lemma \ref{mass} we have
\[\E \Big[ M \varphi \big(P_{2r}, V_{2r} \big) \Big]=M,\]
so by applying \eqref{fatou} to $M-f$ we get the reverse inequality. This shows that $\frac{1}{n} \overline{B_{rn}}(\mathbb{T}_{\lambda_n})$ converges in distribution to the random metric space having density $\varphi \big(P_{2r}, V_{2r} \big)$ with respect to $\overline{B_r}(\p)$. We denote this space by $\overline{B_r}^h$.

Moreover, let $0<s \leq r$. We can take points $y_n$ on the boundary of $\frac{1}{n} \overline{B_{rn}}(\mathbb{T}_{\lambda_n})$ and $y \in \overline{B_r}^h$ such that $\Big( \frac{1}{n} \overline{B_{rn}}(\mathbb{T}_{\lambda_n}), y_n \Big)$ converges in distribution to $\big( \overline{B_r}^h, y \big)$. By Proposition \ref{GHP}, we have
\[\overline{B_s} \Big( \frac{1}{n} \overline{B_{rn}}(\mathbb{T}_{\lambda_n}) \Big) \xrightarrow[n \to +\infty]{(d)} \overline{B_s} \big( \overline{B_r}^h \big),\]
where the hulls are taken with respect to $y_n$ and $y$ respectively. More precisely, we use here Proposition \ref{GHP} in the compact, bipointed case. But $\overline{B_s} \Big( \frac{1}{n} \overline{B_{rn}}(\mathbb{T}_{\lambda_n}) \Big) = \frac{1}{n} \overline{B_{sn}}(\mathbb{T}_{\lambda_n})$, which converges in distribution to $\overline{B_s}^h$. Hence, we have
\[\overline{B_s} \big( \overline{B_r}^h \big) \overset{(d)}{=} \overline{B_s}^h,\]
i.e. the $\overline{B_r}^h$ are "consistent". By the Kolmogorov extension theorem, there is a random metric space, that we write $\hp$, such that $\overline{B_r}(\hp) \overset{(d)}{=} \overline{B_r}^h$ for all $r\geq 0$, and we have
\[\frac{1}{n} \mathbb{T}_{\lambda_n}  \xrightarrow[n \to +\infty]{(d)} \hp\]
for the local GHP distance.
\end{proof}

\section{Properties of the hyperbolic Brownian plane}

\subsection{Local properties of the hyperbolic Brownian plane}

The absolute continuity relation between the Brownian plane $\p$ and the hyperbolic Brownian plane $\hp$ implies that they have the same almost sure "local" properties. This gives us the two following properties of $\hp$.

\begin{prop}
Almost surely, $\hp$ has Hausdorff dimension $4$ and is homeomorphic to $\R^2$.
\end{prop}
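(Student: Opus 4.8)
The plan is to transfer the two local properties from the Brownian plane $\p$ to $\hp$ using the absolute continuity relation \eqref{densityph}. The key observation is that both properties --- Hausdorff dimension $4$ and being homeomorphic to $\R^2$ --- are properties of the whole (non-compact) space, but they can be checked ball by ball: a locally compact length space is homeomorphic to $\R^2$ if and only if each hull $\overline{B_r}$ is homeomorphic to a closed disk with the center and boundary in the right place, and its Hausdorff dimension is the supremum of the Hausdorff dimensions of the $\overline{B_r}$. So it suffices to show that for each fixed $r\geq 0$, almost surely $\overline{B_r}(\hp)$ has Hausdorff dimension $4$ and is a topological disk.

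First I would record that by Theorem \ref{mainthm}, the law of $\overline{B_r}(\hp)$ is absolutely continuous with respect to that of $\overline{B_r}(\p)$, the Radon--Nikodym derivative being the density \eqref{densityph}, which is a positive, finite, measurable function of $(|\partial\overline{B_{2r}}(\p)|, |\overline{B_{2r}}(\p)|)$. Consequently, any event concerning $\overline{B_r}(\hp)$ that has probability $1$ under the law of $\overline{B_r}(\p)$ also has probability $1$ under the law of $\overline{B_r}(\hp)$. Next I would invoke the corresponding facts for the Brownian plane: it is proved in \cite{CLGplane} (building on the analogous statements for the Brownian map \cite{LG11,Mie11}) that almost surely $\p$ has Hausdorff dimension $4$ and is homeomorphic to $\R^2$, and moreover that for each fixed $r$ the hull $\overline{B_r}(\p)$ is almost surely a topological disk of Hausdorff dimension $4$. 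Combining this with the absolute continuity gives, for each fixed $r$, that $\overline{B_r}(\hp)$ is almost surely a topological disk of Hausdorff dimension $4$.

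Finally I would assemble the global statements from the statements about hulls. Taking a countable union over $r\in\N$, almost surely every $\overline{B_r}(\hp)$, $r\in\N$, is a topological disk of dimension $4$; since $\hp=\bigcup_{r\in\N}\overline{B_r}(\hp)$ is an increasing union of such disks (with the center always the origin and the boundaries escaping to infinity), a standard gluing argument shows $\hp$ is homeomorphic to $\R^2$, and its Hausdorff dimension equals $\sup_r \dim_H \overline{B_r}(\hp)=4$. The only mild subtlety --- and the step I expect to require the most care --- is the passage from ``each fixed $r$'' to ``all $r$ simultaneously'' for the topological statement: one must check that the increasing union of the nested topological disks $\overline{B_r}(\hp)$ is indeed a surface homeomorphic to the plane, which follows because $\hp$ is by construction a locally compact, one-ended length space and the hulls exhaust it, so the classification of surfaces applies. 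This is exactly the same argument as for $\p$ in \cite{CLGplane}, so it can be cited rather than redone.
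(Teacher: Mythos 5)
There is a genuine gap in your argument for the homeomorphism claim, and it is precisely the pitfall the paper explicitly warns about at the end of its own proof. Your key reduction --- ``a locally compact length space is homeomorphic to $\R^2$ if and only if each hull $\overline{B_r}$ is homeomorphic to a closed disk with the center and boundary in the right place'' --- is false. The closed half-plane $\{(x,y) \in \R^2 : y \geq 0\}$, rooted at an interior point, is a one-ended, locally compact length space whose hulls $\overline{B_r}$ are all homeomorphic to closed disks (with the root in the interior and the boundaries escaping to infinity), yet it is not homeomorphic to $\R^2$. Your final appeal to ``the classification of surfaces'' silently assumes that $\hp$ is a $2$-manifold \emph{without boundary}; but that is exactly what must be proved, and knowing that the closed hulls are closed disks does not give it --- the half-plane is an increasing union of closed disks and is a manifold \emph{with} boundary. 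So the ``standard gluing argument'' you invoke does not go through from the information you have extracted.

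The paper sidesteps this by working with open sets rather than closed hulls: it considers $U_r = \bigcup_{\eps>0}\overline{B_{r-\eps}}(\p)$, shows that $U_r$ is a connected open subset of $\p$ with connected complement, hence (by the Riemann mapping theorem, since $\p$ is known to be homeomorphic to $\R^2$) homeomorphic to an open disk. This yields, for every point $x$ with $d(\rho,x)<r$, a neighbourhood homeomorphic to an open disk, and shows loops in $B_r(\p)$ are contractible --- facts that \emph{are} local and therefore transfer to $\hp$ via the absolute-continuity relation. From this one concludes $\hp$ is a noncompact, simply connected topological surface (without boundary), hence homeomorphic to $\R^2$. Your Hausdorff-dimension argument, by contrast, is fine and matches the paper's: dimension is a local/$\sigma$-stable property, so it transfers ball by ball via absolute continuity, and $\dim_H \hp = \sup_r \dim_H \overline{B_r}(\hp) = 4$.
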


\begin{proof}
First, by the absolute continuity relation with the Brownian plane, for all $r \geq 0$, the space $\overline{B_r}(\hp)$ has a.s. Hausdorff dimension $4$ and so does $\hp$.

If $(X,d,\rho)$ is a pointed metric space and $r>0$, we write $U_r(X)=\bigcup_{\eps>0} \overline{B_{r-\eps}}(X)$. Let $r>0$. The set $U_r(\p)$ is a connected, open subset of $\p$, and it is quite easy to prove that $\p \backslash U_r(\p)$ is connected. Indeed, $\overline{B_{r+1}}(\p) \backslash U_r(\p)$ is connected because it is the decreasing intersection of the $\overline{B_{r+1}}(\p) \backslash \overline{B_{r-\eps}}(\p)$, which are compact, connected subsets.
%If $U_r$ is not convex there are two non-empty closed subsets $C_1$ and $C_2$ of $\p$ such that $C_1 \cap C_2= \emptyset$ and $C_1 \cup C_2=\p \backslash U_r$. Since $\p \backslash \overline{B_r}(\p)$ is connected by the definition of $\overline{B_r}(\p)$, we can assume $\p \backslash \overline{B_r}(\p) \subset C_1$ and $C_2 \subset \overline{B_r}(\p)$. In particular, $C_2$ is compact so $d(C_1, C_2)>0$, so there is an $\eps>0$ such that $C_1^{\eps}$ and $C_2^{\eps}$ are disjoint, where $C^{\eps}$ is the set of those points that lie at distance less than $\eps$ from $C$. We have $U_r=\bigcup_{\delta>0} U_{r-\delta}$ and $\p \backslash U_r \subset C_1 \cup C_2$ so $C_1^{\eps}$, $C_2^{\eps}$ and $(U_{r-\delta})_{0<\delta<r}$ form a covering of $\overline{B_r}(\p)$
% by open subsets. By compactness it has a finite subcovering, so there is $\delta>0$ such that $\overline{B_r}(\p) \subset C_1^{\eps} \cup C_2^{\eps} \cup U_{r-\delta}$. This implies $\p \backslash
% \overline{B_{r-\delta}}(\p) \subset C_1^{\eps} \cup C_2^{\eps}$, which is absurd because $\p \backslash \overline{B_{r-\delta}}(\p)$ is connected.

The set $U_r(\p)$ is a connected, open subset of the plane whose complement is connected, so it is homeomorphic to the open unit disk (this is a consequence of the Riemann mapping theorem). In particular, almost surely, the two following hold:
\begin{enumerate}
\item
for all $x \in U_r(\p)$, there is a neighbourhood of $x$ that is homeomorphic to the unit disk,
\item
any loop in $U_r(\p)$ is contractible in $U_r(\p)$.
\end{enumerate}
By absolute continuity and since $U_r$ is always a subset of $\overline{B_r}$, the two above items a.s. hold for $U_r(\hp)$. Almost surely, they hold for any $r \in \N^*$. Hence, $\hp$ is a noncompact, simply connected topological surface. Therefore, it is homeomorphic to the plane (for example, it is a consequence of the Riemann uniformization theorem and the fact that any topological surface may be equipped with a Riemann surface structure).
\end{proof}

Note that in this proof, it was important to consider $U_r(\p)$ and not $\overline{B_r}(\p)$. Indeed, a metric space may not be homeomorphic to the plane, even if all its hulls are homeomorphic to the closed unit disk (for exemple a closed half-plane, rooted at an interior point).

For the same reason as above, the results about the local confluence of geodesics in the Brownian map and the Brownian plane also hold for the hyperbolic Brownian plane.

\begin{prop}
A.s., for any $\eps>0$, there is a $\delta>0$ such that the following holds. All the geodesics in $\hp$ from the root to a point at distance at least $\eps$ from the root share a common initial segment of length at least $\delta$.
\end{prop}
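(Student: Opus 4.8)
The plan is to proceed exactly as in the two preceding propositions, transferring the confluence of geodesics from the Brownian plane to $\hp$ via the absolute continuity relation of Theorem \ref{mainthm}. Recall that the local confluence of geodesics near the root holds almost surely in the Brownian plane: almost surely, for every $\eps>0$ there is $\delta>0$ such that all geodesics of $\p$ from $\rho$ to points at distance at least $\eps$ from $\rho$ coincide on $[0,\delta]$ (this is the analog for $\p$ of Le Gall's confluence result \cite{LG11} for the Brownian map, and is established in \cite{CLGplane}). Denote by $\mathcal{C}_\eps$ the corresponding event for the space under consideration.

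First I would fix $\eps>0$ and argue that $\mathcal{C}_\eps$ is a measurable functional of the pointed measured metric space $\overline{B_\eps}(X)$, for $X\in\{\p,\hp\}$. The key observation is that if $\gamma$ is a geodesic of $X$ issued from $\rho$ of length at least $\eps$, then every point of $\gamma|_{[0,\eps]}$ is at distance $\leq\eps$ from $\rho$, so $\gamma|_{[0,\eps]}$ lies inside $\overline{B_\eps}(X)$; since the hull carries the induced metric, $\gamma|_{[0,\eps]}$ is precisely a geodesic of $\overline{B_\eps}(X)$ from $\rho$ to a point of the sphere $S_\eps=\{y:d(\rho,y)=\eps\}$, and conversely any geodesic of $\overline{B_\eps}(X)$ from $\rho$ to a point of $S_\eps$ is a geodesic of $X$ of length $\eps$. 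Hence the family of length-$\eps$ initial segments of geodesics of $X$ from $\rho$ to points at distance $\geq\eps$ coincides with the family of all geodesics of $\overline{B_\eps}(X)$ from $\rho$ to $S_\eps$, and (since one may always shrink $\delta$ below $\eps$) $\mathcal{C}_\eps$ holds if and only if all geodesics of $\overline{B_\eps}(X)$ from $\rho$ to $S_\eps$ agree on some interval $[0,\delta]$ with $\delta>0$ — a condition depending only on $\overline{B_\eps}(X)$, and whose measurability on the GHP space is routine. Note that $S_\eps$ itself is recovered from $\overline{B_\eps}(X)$ as the set of points at induced distance exactly $\eps$ from $\rho$.

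Then I would conclude. By the confluence result for the Brownian plane, $\mathcal{C}_\eps$ holds $\overline{B_\eps}(\p)$-almost surely; and by Theorem \ref{mainthm} the law of $\overline{B_\eps}(\hp)$ is absolutely continuous with respect to that of $\overline{B_\eps}(\p)$ (for any bounded measurable $f$, $\mathbb{E}[f(\overline{B_\eps}(\hp))]=\mathbb{E}\big[e^{-2|\overline{B_{2\eps}}(\p)|}e^{|\partial\overline{B_{2\eps}}(\p)|}\int_0^1 e^{-3|\partial\overline{B_{2\eps}}(\p)|x^2}\mathrm{d}x\cdot f(\overline{B_\eps}(\p))\big]$, and the pre-factor is a.s. finite and nonnegative). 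Taking $f=\mathbbm{1}_{\mathcal{C}_\eps^c}$ gives $\mathbb{P}(\overline{B_\eps}(\hp)\in\mathcal{C}_\eps^c)=0$, i.e. $\mathcal{C}_\eps$ holds almost surely for $\hp$. Finally, if $\eps\leq\eps'$ then geodesics from $\rho$ to points at distance $\geq\eps'$ form a sub-family of those to points at distance $\geq\eps$, so $\mathcal{C}_\eps\subseteq\mathcal{C}_{\eps'}$; consequently the event ``for every $\eps>0$, $\mathcal{C}_\eps$ holds'' equals the countable intersection $\bigcap_{k\geq 1}\mathcal{C}_{1/k}$, which therefore has probability $1$. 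This is the statement of the proposition.

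The only delicate point — and it is a mild one — is the reduction of $\mathcal{C}_\eps$ to a functional of $\overline{B_\eps}(X)$: one must check that ``geodesic of $X$'' and ``geodesic of $\overline{B_\eps}(X)$ for the induced metric'' agree for the relevant paths, which is exactly the restriction-to-$[0,\eps]$ argument above. Beyond that, the proof is a straightforward transfer along the same lines as the two preceding propositions.
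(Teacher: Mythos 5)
Your proof is correct and follows essentially the paper's own (very terse) argument: the paper simply invokes the absolute continuity of $\overline{B_r}(\hp)$ with respect to $\overline{B_r}(\p)$, exactly as you do, to transfer the almost-sure confluence property from the Brownian plane. Your additional details (reduction of the confluence event to a functional of the hull with its induced metric, and the monotonicity in $\eps$ allowing a countable intersection) are sound fleshings-out of the same approach.
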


Finally, the Brownian map, the Brownian plane and the hyperbolic Brownian plane are locally isometric in the following sense.

\begin{prop}
For any $\eps>0$, there is a $\delta>0$ such that the following holds. It is possible to couple the Brownian map, the Brownian plane and the hyperbolic Brownian plane in such a way that their hulls of radius $\delta$ coincide with probability at least $1-\eps$.
\end{prop}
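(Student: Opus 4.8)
The plan is to reduce the three-way statement to two pairwise couplings that have essentially already been established in the preceding arguments. The key observation is that all three spaces are absolutely continuous with respect to one another when restricted to a hull of fixed radius, with Radon--Nikodym densities that converge to $1$ as the radius goes to $0$. Concretely, recall from Theorem \ref{convergenceBM} that $\overline{B_r}(\p)$ and (a rescaling of) $\overline{B_r}(\mathbf{m}_\infty)$ agree in distribution after suitable rooting, so the Brownian map and the Brownian plane can be coupled to share a small hull; and by the very definition of $\hp$ in Theorem \ref{mainthm}, $\overline{B_r}(\hp)$ has density $\varphi\big(P_{2r},V_{2r}\big)$ with respect to $\overline{B_r}(\p)$.

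First I would treat the Brownian plane / hyperbolic Brownian plane pair. Fix $\eps>0$. Since $\varphi(p,v)=e^{-2v}e^p\int_0^1 e^{-3px^2}\mathrm{d}x \to 1$ as $(p,v)\to(0,0)$, and since $P_r,V_r\to 0$ almost surely as $r\to 0$, for $r$ small enough the density $\varphi\big(P_{2r},V_{2r}\big)$ is close to $1$ with high probability. Combined with Lemma \ref{mass}, which says $\E\big[\varphi(V_r,P_r)\big]=1$, this lets us build, for $r=\delta$ small enough, a coupling of $\overline{B_\delta}(\p)$ and $\overline{B_\delta}(\hp)$ under which the two hulls are equal with probability at least $1-\eps/2$: the total variation distance between a law and its biasing by a density $D\geq 0$ with $\E[D]=1$ is $\tfrac12\E|D-1|$, which is small here because $\varphi\big(P_{2\delta},V_{2\delta}\big)$ is close to $1$ in probability and has expectation $1$ (a uniform integrability / truncation argument handles the tail, using that $\varphi$ is bounded by $e^{P_{2\delta}}$ which is integrable for $\delta$ small by Theorem 1.2 of \cite{CLGHull}).

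Next I would handle the Brownian map / Brownian plane pair by the same mechanism: the local absolute continuity between $\overline{B_r}(\p)$ and the hull of radius $r$ in the Brownian map is classical (it is implicit in \cite{CLGplane}, and is exactly the statement that their laws agree up to a density tending to $1$; alternatively one invokes the fact, also from \cite{CLGplane}, that the Brownian plane is the scaling limit and local limit of the Brownian map around a typical point). So for $\delta$ small there is a coupling of $\overline{B_\delta}(\mathbf{m}_\infty)$ and $\overline{B_\delta}(\p)$ agreeing with probability at least $1-\eps/2$. Finally I would compose the two couplings: take $\delta$ small enough for both, realize all three hulls on one probability space so that the $\p$-hull is shared with the $\mathbf{m}_\infty$-hull on an event of probability $\geq 1-\eps/2$ and with the $\hp$-hull on an event of probability $\geq 1-\eps/2$; on the intersection, which has probability $\geq 1-\eps$, all three hulls of radius $\delta$ coincide, and a standard gluing (Kolmogorov-type) argument extends this to a coupling of the full spaces.

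The main obstacle I anticipate is the uniform integrability needed to turn "$\varphi\big(P_{2\delta},V_{2\delta}\big)\to 1$ in probability as $\delta\to 0$, with expectation identically $1$" into a genuine total-variation bound; one must rule out a small amount of mass escaping to large values of $\varphi$. This is controlled using the explicit Laplace-transform computations of \cite{CLGHull} already exploited in Lemma \ref{mass}: $\varphi(V_{2\delta},P_{2\delta})\leq e^{P_{2\delta}}$, and $\E\big[e^{P_{2\delta}}\big]<\infty$ for $\delta$ small (indeed it tends to $1$ as $\delta\to 0$), so the family $\{\varphi(V_{2\delta},P_{2\delta})\}_{\delta\leq\delta_0}$ is uniformly integrable and $\E\big|\varphi(V_{2\delta},P_{2\delta})-1\big|\to 0$. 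The rest is bookkeeping with the local GHP topology and the fact, already used repeatedly above, that $\overline{B_r}$ of a hull is the corresponding hull of the ambient space.
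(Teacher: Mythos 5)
Your proposal is essentially the paper's proof. For the $\p$ versus $\hp$ pair, both you and the paper show that the total-variation distance between the laws of $\overline{B_\delta}(\hp)$ and $\overline{B_\delta}(\p)$ tends to $0$ using the density $\varphi$ together with Lemma \ref{mass}; the paper states tersely that $\mathbb{E}\big[|\varphi-1|\big]\to 0$ and invokes the maximal-coupling theorem, whereas you supply the Scheffé/uniform-integrability detail ($\varphi\leq e^{P_{2\delta}}$, $\mathbb{E}[e^{P_{2\delta}}]<\infty$ for $\delta$ small, a.s.\ convergence $\varphi\to 1$, and $\mathbb{E}[\varphi]=1$), which is a welcome amplification of a step the paper leaves implicit. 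For the Brownian map versus $\p$ pair, the paper simply cites Theorem~1 of \cite{CLGplane}, which is exactly this coupling statement; your proposal gets there too, but your initial invocation of Theorem \ref{convergenceBM} is misplaced --- that result concerns finite triangulations converging to the Brownian map and says nothing about coupling the map with the plane --- and your ``implicit absolute continuity'' remark does not reflect how \cite{CLGplane} actually proves Theorem~1 (it is a direct coupling/convergence argument, not an absolute-continuity one). Since you ultimately defer to \cite{CLGplane} anyway, this does not break the argument, but the precise citation is Theorem~1 there. The composition and extension to a coupling of the full spaces are handled as you describe.
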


\begin{proof}
The result for the Brownian map and the Brownian plane is Theorem 1 of \cite{CLGplane}, so we only need to prove it for $\p$ and $\hp$. Let $\delta>0$, and let $A$ be a mesurable subset of the Gromov--Hausdorff--Prokhorov space. Then
\begin{eqnarray*}
\Big| \mathbb{P} \big( \overline{B_{\delta}} (\hp) \in A \big) - \mathbb{P} \big( \overline{B_{\delta}}(\p) \in A \big) \Big| &=& \big| \mathbb{E} \big[ \varphi(P_{\delta}, V_{\delta}) \mathbbm{1}_{\overline{B_{\delta}} (\p) \in A} \big]  - \mathbb{P} \big( \overline{B_{\delta}}(\p) \in A \big) \Big|\\
& \leq & \mathbb{E} \big[ |\varphi(P_{\delta}, V_{\delta})-1| \big],
\end{eqnarray*}
which goes to $0$ as $\delta \to 0$. Hence, the total variation distance between the distributions of $\overline{B_{\delta}} (\hp)$ and $\overline{B_{\delta}} (\p)$ goes to $0$ as $\delta \to 0$. This proves the result by the maximal coupling theorem (see e.g. Section 2 of \cite{dH12}).
\end{proof}

\subsection{Unimodularity}

The goal of this section is to prove that $\hp$ satisfies a property that is the continuum analog of unimodularity for random graphs.

\begin{prop}\label{unimodularity}
Let $\rho$ be the root of $\hp$ and $\mu_{\hp}$ its volume measure. Let $f$ be a measurable function from the space of locally compact, bipointed measured metric spaces to $\R^+$. We have
\[ \mathbb{E} \Big[ \int_{\hp} f(\hp, \rho, y) \, \mu_{\hp}(\mathrm{d}y) \Big] = \mathbb{E} \Big[ \int_{\hp} f(\hp, y, \rho) \, \mu_{\hp}(\mathrm{d} y) \Big]. \]
\end{prop}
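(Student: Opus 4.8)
The plan is to deduce the continuum unimodularity of $\hp$ from the (discrete) unimodularity of the PSHT $\mathbb{T}_\lambda$, pushing the mass-transport identity through the scaling limit provided by Theorem \ref{mainthm}. First I would record that the $\mathbb{T}_\lambda$ are unimodular random rooted triangulations: this follows from the domain Markov property (Definition \ref{defMarkov}) exactly as in \cite{CurPSHIT}, since re-rooting at a uniformly chosen vertex preserves the $\lambda$-Markovian property and hence the distribution. So for every measurable $g$ from the space of bipointed rooted graphs to $\R^+$ one has the mass-transport principle $\mathbb{E}[\sum_{y \in \mathbb{T}_\lambda} g(\mathbb{T}_\lambda, \rho, y)] = \mathbb{E}[\sum_{y \in \mathbb{T}_\lambda} g(\mathbb{T}_\lambda, y, \rho)]$, where $\rho$ is the root vertex.

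Next I would transfer this to the metric-space setting. Fix a bounded continuous $f$ on the space of locally compact bipointed measured metric spaces which depends only on the ball of radius $R$ around the first marked point (a truncation we may always reduce to, since both sides of the desired identity are monotone limits over such $R$). The key point is a local, finite-range rewriting: for a vertex $y$ of $\mathbb{T}_{\lambda_n}$ with $d(\rho, y) \le Rn$, the rescaled pointed ball $\frac1n B_{Rn}(\mathbb{T}_{\lambda_n}, \rho)$ together with the location of $y$ is a measurable function of the finite map $B_{2Rn}^{\bullet}(\mathbb{T}_{\lambda_n})$, so $\frac1{3n^4}\sum_{y} f(\frac1n \mathbb{T}_{\lambda_n}, \rho, y)\mathbbm{1}_{d(\rho,y)\le Rn}$ is a bounded functional of that finite map. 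By Theorem \ref{mainthm} (in the form that $\frac1n \mathbb{T}_{\lambda_n}$, with measure rescaled by $\frac1{3n^4}$, converges to $\hp$ for the local GHP topology) together with the Skorokhod representation theorem, this functional converges almost surely, and by dominated convergence in expectation, to $\mathbb{E}[\int_{\hp} f(\hp,\rho,y)\mathbbm{1}_{d(\rho,y)\le R}\,\mu_{\hp}(\mathrm{d}y)]$; and symmetrically for $f(\hp, y, \rho)$, using that the ball of radius $R$ around $y$ inside a ball of radius $Rn$ around $\rho$ is controlled by $B_{CRn}^{\bullet}$ for a suitable constant. Applying the discrete mass-transport principle to the truncated transport $g_n$ corresponding to $f$ at each finite level $n$ and passing to the limit yields the identity with the indicator $\mathbbm{1}_{d(\rho,y)\le R}$; letting $R \to \infty$ by monotone convergence gives the full statement for ball-dependent $f$, and a standard monotone class / approximation argument extends it to all measurable $f \ge 0$.

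The main obstacle is the interchange of limit and sum in the second term: when the transport $f(\hp, y, \rho)$ is indexed by the faraway base point $y$, one must make sure that the rescaled pointed ball around $y$ is genuinely read off from a finite window of the discrete map uniformly in $n$, and that no mass is lost to vertices near the boundary of $B_{Rn}$ whose $R$-ball around them is not captured by $B_{2Rn}^{\bullet}$. This is exactly the type of issue handled by Proposition \ref{GHP} and the remark that $\overline{B_r}(m)$ is a function of $B_{2r}^{\bullet}(m)$; I would invoke those, taking radii slightly larger than $Rn$ and using the almost-sure continuity of $V$ at the relevant radius (as in the proof of Proposition \ref{joint1}), so that the truncated sums converge without boundary leakage. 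Once this uniform-in-$n$ localization is in place, the rest is routine bookkeeping with dominated convergence.
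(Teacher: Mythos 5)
There is a genuine gap in the first step, and it propagates through the whole argument. You assert that $\mathbb{T}_\lambda$ is unimodular, citing the domain Markov property and re-rooting at a uniform vertex. This is not what is true, and it is not what \cite{CurPSHIT} proves. The PSHT is rooted at an oriented \emph{edge}, and Proposition~9 of \cite{CurPSHIT} (and its type-I analog) establishes invariance under re-rooting along the simple random walk, i.e.\ \emph{stationarity}, not the vertex mass-transport principle. For a stationary random graph rooted at a vertex, the vertex mass-transport identity $\mathbb{E}[\sum_{y} g(G,\rho,y)] = \mathbb{E}[\sum_{y} g(G,y,\rho)]$ generally fails; it is only the degree-biased version $\widehat{\mathbb{T}}_{\lambda}$ (with density proportional to $1/\deg(\rho)$) that is unimodular, by the standard stationary-to-unimodular correspondence (Proposition~2.5 of \cite{BCstationary}). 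So the discrete identity you want to push through the scaling limit actually reads
\[
\mathbb{E}\Big[\tfrac{1}{\deg(\rho_n)}\textstyle\sum_{y} g(\mathbb{T}_{\lambda_n},\rho_n,y)\Big] = \mathbb{E}\Big[\tfrac{1}{\deg(\rho_n)}\textstyle\sum_{y} g(\mathbb{T}_{\lambda_n},y,\rho_n)\Big],
\]
with an extra local weight $1/\deg(\rho_n)$ on both sides.

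That extra weight then creates the real difficulty: when you rescale and pass to the limit, you must rule out that $\deg(\rho_n)$ is correlated with the macroscopic functional $f(\frac1n\mathbb{T}_{\lambda_n},\rho_n,y)$, otherwise the bias does not factor out and you do not recover the unweighted identity for $\hp$. The paper handles this with Lemma~\ref{rootandLGHP}, which proves that $(\deg(\rho_n), \frac1n\mathbb{T}_{\lambda_n})$ converges jointly to $(D,\hp)$ with $D$ \emph{independent} of $\hp$; this is what lets the two sides of the biased identity converge to $\mathbb{E}[1/D]$ times the two sides of the desired identity, which then cancel. Your proposal contains no analogue of this step. The localization/truncation issues you discuss in your last paragraph (reading $f$ off $B^\bullet_{2Rn}$, using Proposition~\ref{GHP} and continuity of the volume process) are indeed the right concerns for passing to the limit and match the paper's reduction to functions satisfying conditions (i)--(iv), but they do not repair the missing degree-bias and decoupling steps, which are the heart of the argument.
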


To prove this result, we will need the following lemma. It roughly means that the degree of the root is independent of the large-scale geometry of the map.

\begin{lem}\label{rootandLGHP}
As earlier, we write $\frac{1}{n} \mathbb{T}_{\lambda_n}$ for the set of the vertices of $\mathbb{T}_{\lambda_n}$, equipped with $\frac{1}{n}$ times its graph distance and the measure giving mass $\frac{1}{3n^4}$ to each vertex. Let $\rho_n$ be the origin of the root edge of $\mathbb{T}_{\lambda_n}$. We have the convergence
\[\Big( \deg (\rho_n), \frac{1}{n} \mathbb{T}_{\lambda_n} \Big) \xrightarrow[n \to +\infty]{(d)} \big( D , \hp \big),\]
where $D$ has the same distribution as the degree of the root in the UIPT and is independent of $\hp$.
\end{lem}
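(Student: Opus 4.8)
The plan is to establish the joint convergence in Lemma~\ref{rootandLGHP} by decoupling the \emph{microscopic} information (the degree $\deg(\rho_n)$, which only depends on a small neighbourhood of the root) from the \emph{macroscopic} information (the rescaled metric space $\frac{1}{n}\mathbb{T}_{\lambda_n}$). The key point is that under the local GHP rescaling, the immediate combinatorial environment of the root vanishes in the limit, so these two pieces of data become asymptotically independent, and the macroscopic limit is $\hp$ by Theorem~\ref{mainthm}. First I would reduce to the critical case: by the absolute continuity relation \eqref{densityph} (equivalently Proposition~\ref{density}), for any bounded continuous $g$ and any bounded function $\psi$ of the root degree,
\[
\mathbb{E}\Big[\psi(\deg\rho_n)\,g\big(\tfrac{1}{n}\mathbb{T}_{\lambda_n}\big)\Big]
= \mathbb{E}\Big[\psi(\deg\rho_n)\,\varphi_{\lambda_n}(P'_n,V'_n)\,g\big(\tfrac{1}{n}\mathbb{T}\big)\Big]
\]
after passing through the hull $B^\bullet_{2Rn}(\mathbb{T})$ as in the proof of Theorem~\ref{mainthm} (here $R$ is large enough that $g$ only depends on balls of radius $R$, and $\deg\rho_n$ is measurable with respect to $B^\bullet_{2Rn}$ too). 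So it suffices to prove the claim with $\mathbb{T}_{\lambda_n}$ replaced by the UIPT $\mathbb{T}$ and to check that the density factor $\varphi_{\lambda_n}(P'_n,V'_n)$, which converges to $\varphi(P_{2R},V_{2R})$ by Proposition~\ref{density}, does not disturb the asymptotic independence.

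Next I would prove the decoupling in the UIPT. By the spatial Markov property of the UIPT (Definition~\ref{defMarkov}), conditionally on the hull $B^\bullet_{1}(\mathbb{T})$ of radius $1$ around the root — which determines $\deg(\rho)$ — the rest of $\mathbb{T}$ is a Boltzmann triangulation of the appropriate perimeter, and its rescaled metric structure at scale $n$ does not see $B^\bullet_1$. More concretely, fix $\ell\ge 1$ and condition on $\{\deg\rho=\ell\}$; one shows that the conditional law of $\big(\tfrac1n\mathbb{T},(P'_n,V'_n)\big)$ converges to the \emph{same} limit $\big(\p,(\tfrac32 P_{2R},3V_{2R})\big)$ regardless of $\ell$. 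For this I would re-run the coupling argument of Proposition~\ref{coupling}: the probability that $B^\bullet_{rn}(\mathbb{T})$ equals a given $t$ with $\deg\rho=\ell$ differs from the unconditioned one only through the enumeration of a bounded piece near the root, a factor that is asymptotically constant in $n$ (this is exactly where Lemma~\ref{lemcombi1} is insensitive to bounded perturbations of $p,n$). Hence $\deg\rho_n$ is asymptotically independent of $B^\bullet_{2Rn}(\mathbb{T})/n$ together with $(P'_n,V'_n)$, and passing to the limit,
\[
\mathbb{E}\Big[\psi(\deg\rho_n)\,g\big(\tfrac1n\mathbb{T}_{\lambda_n}\big)\Big]
\;\longrightarrow\;
\mathbb{E}\big[\psi(D)\big]\cdot\mathbb{E}\big[\varphi(P_{2R},V_{2R})\,g(\p)\big]
= \mathbb{E}\big[\psi(D)\big]\cdot\mathbb{E}\big[g(\hp)\big],
\]
where $D$ is the root degree in the UIPT and the last equality is Theorem~\ref{mainthm} (using Lemma~\ref{mass} as in \eqref{fatou}). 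Since products $\psi\otimes g$ are convergence-determining, this gives the joint convergence with independent limit.

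\textbf{The main obstacle} I expect is making the ``asymptotic independence of $\deg\rho_n$ and the rescaled map'' fully rigorous while also carrying the density factor $\varphi_{\lambda_n}(P'_n,V'_n)$ along — one needs the convergence of $\psi(\deg\rho_n)\varphi_{\lambda_n}(P'_n,V'_n) g(\tfrac1n\mathbb{T})$ in expectation, not just a.s.\ convergence of the integrand, so a uniform integrability / Fatou-sandwich argument (identical in spirit to the one used for \eqref{fatou} via Lemma~\ref{mass}) is required, now with the extra bounded factor $\psi$; this works because $\psi$ is bounded and $\mathbb{E}[\varphi(P_{2R},V_{2R})]=1$. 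A secondary, more bookkeeping-level difficulty is that the limit object $\hp$ was only constructed through its hulls via Kolmogorov extension in the proof of Theorem~\ref{mainthm}, so the statement ``$(\deg\rho_n,\tfrac1n\mathbb{T}_{\lambda_n})\to(D,\hp)$ for the local GHP topology'' must be understood hull-by-hull and then assembled, exactly as there; once the finite-radius joint convergences with independent limits are in hand, tightness of the metric marginal (already established) upgrades these to the full local GHP joint convergence.
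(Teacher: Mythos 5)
Your high-level plan — separate the microscopic root degree from the macroscopic rescaled metric space via the spatial Markov property, then reassemble with a Fatou/uniform-integrability argument — is the right one, and the observation that removing the bounded piece $B_1^\bullet$ is negligible at scale $n$ is exactly what the paper uses. However, the step where you actually establish the decoupling has a genuine gap, and the paper uses a different device that you have not identified.

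The problem is with ``re-run the coupling argument of Proposition~\ref{coupling} conditional on $\deg\rho=\ell$.'' You argue that $\mathbb{P}\big(B^\bullet_{rn}(\mathbb{T})=t\mid\deg\rho=\ell\big)$ differs from $\mathbb{P}\big(B^\bullet_{rn}(\mathbb{T})=t\big)$ by a factor that is constant in $n$. This is literally true on the support (the factor is $1/\mathbb{P}(\deg\rho=\ell)$, since by the Markov property $\mathbb{P}(B_{rn}^\bullet(\mathbb{T})=t)=C_p\lambda_c^{|t|}$ does not see the root degree), but it is beside the point: the conditioning also restricts the support to triangulations $t$ whose root has degree exactly $\ell$. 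The total variation distance between the conditional and unconditional laws of $B^\bullet_{rn}(\mathbb{T})$ is therefore bounded below by $1-\mathbb{P}(\deg\rho=\ell)$, which does not vanish. To conclude you would still need to argue that the push-forward to the rescaled hull washes out this support restriction, and that is precisely the content of the asymptotic independence you are trying to prove, so the argument as written is circular. The paper resolves this with a peeling-time trick that you do not invoke: after conditioning on $B_1^\bullet(\mathbb{T}_{\lambda_n})=t$, the spatial Markov property says the law of $\mathbb{T}_{\lambda_n}\setminus B_1^\bullet$ depends only on $p=|\partial t|$; and that conditional law is exactly the \emph{unconditional} law of $\mathbb{T}_{\lambda_n}\setminus D_p(\mathbb{T}_{\lambda_n})$, where $D_p$ is the submap revealed at the first peeling time $\tau_p$ at which the discovered perimeter equals $p$. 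Since $|D_p(\mathbb{T}_{\lambda_n})|$ is tight, removing it is negligible at scale $n$, and then $\frac1n\big(\mathbb{T}_{\lambda_n}\setminus D_p(\mathbb{T}_{\lambda_n})\big)\to\hp$ follows directly from Theorem~\ref{mainthm}. This converts a conditioned law into an unconditioned one, which is the key move your proposal is missing.

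A secondary remark: the reduction to the UIPT via absolute continuity is correct but unnecessary. The spatial Markov property (Definition~\ref{defMarkov}) holds for $\mathbb{T}_{\lambda_n}$ directly, so the paper's argument, and any correct variant of yours, can be run on $\mathbb{T}_{\lambda_n}$ itself, avoiding the need to carry the density factor $\varphi_{\lambda_n}(P'_n,V'_n)$ through the decoupling.
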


\begin{proof}
The convergence of the first marginal is obvious. Hence, it is enough to prove that for all $d \geq 1$, the spaces $\frac{1}{n} \mathbb{T}_{\lambda_n}$ conditioned on $\deg (\rho_n)=d$ converge in distribution to $\hp$. More generally, we prove that for each finite triangulation $t$ that is a possible value of $B^{\bullet}_1(\mathbb{T})$, the spaces $\frac{1}{n} \mathbb{T}_{\lambda_n}$ conditioned on $B^{\bullet}_1(\mathbb{T}_{\lambda_n})=t$ converge to $\hp$. For all $s \geq 0$, the GHP distance between $B_s(\mathbb{T}_{\lambda_n})$ and $B_s(\mathbb{T}_{\lambda_n}) \backslash B_1^{\bullet}(\mathbb{T}_{\lambda_n})$ is tight (it is bounded by $|B_1^{\bullet}(\mathbb{T}_{\lambda_n})|$). Hence, for all $r \geq 0$, the GHP distance between $B_r \Big( \frac{1}{n} \mathbb{T}_{\lambda_n} \Big)$ and $B_r \Big( \frac{1}{n}  \big( \mathbb{T}_{\lambda_n} \backslash B_1^{\bullet}(\mathbb{T}_{\lambda_n}) \big) \Big)$ goes to $0$ in probability as $n \to +\infty$. Hence, it is enough to prove the lemma for $\frac{1}{n}  \big( \mathbb{T}_{\lambda_n} \backslash B_1^{\bullet}(\mathbb{T}_{\lambda_n}) \big)$ instead of $\frac{1}{n} \mathbb{T}_{\lambda_n}$. But by the spatial Markov property of $\mathbb{T}_{\lambda_n}$, the distribution of $\mathbb{T}_{\lambda_n} \backslash B_1^{\bullet}(\mathbb{T}_{\lambda_n})$ conditionally on $B_1^{\bullet}(\mathbb{T}_{\lambda_n})$ only depends on $| \partial B_1^{\bullet}(\mathbb{T}_{\lambda_n})|$.

We recall that a \emph{peeling algorithm} $\mathscr{A}$ is a way to assign, to every finite triangulation $t$ with a hole, an edge $\mathscr{A}(t)$ on the boundary of the hole. Informally, $\mathscr{A}(t)$ is the next edge to be explored once the explored part of the map is equal to $t$. See \cite{Ang03} or \cite{CLGpeeling} for more details. We now fix a deterministic peeling algorithm, and we explore $\mathbb{T}_{\lambda_n}$ using this algorithm. For every $p \geq 1$, we write $\tau_p$ for the first time at which the perimeter of the discovered map is equal to $p$. Note that, since $\mathbb{T}_{\lambda_n}$ may be identified with an infinite triangulation of a $1$-gon, the times $\tau_p$ are a.s. finite even for $p=1$ or $p=2$. We also write $D_p(\mathbb{T}_{\lambda_n})$ for the triangulation discovered at time $\tau_p$. By the spatial Markov property, the map $\mathbb{T}_{\lambda_n} \backslash B_1^{\bullet}(\mathbb{T}_{\lambda_n})$ conditionally on $| \partial B_1^{\bullet}(\mathbb{T}_{\lambda_n})|=p$ has the same distribution as $\mathbb{T}_{\lambda_n} \backslash D_p(\mathbb{T}_{\lambda_n})$. Hence, it is enough to prove that for any $p \geq 0$, we have
\[ \frac{1}{n} \big( \mathbb{T}_{\lambda_n} \backslash D_p(\mathbb{T}_{\lambda_n}) \big) \xrightarrow[n \to +\infty]{(d)} \hp. \]
This easily follows from Theorem \ref{mainthm} and the fact that, for all $r \geq 0$, the GHP distance between $B_r(\mathbb{T}_{\lambda_n})$ and $B_r(\mathbb{T}_{\lambda_n}) \backslash D_p(\mathbb{T}_{\lambda_n})$ is at most $\big| D_p(\mathbb{T}_{\lambda_n}) \big|$, which is tight.
\end{proof}

We now move on to the proof of Proposition \ref{unimodularity}.

\begin{proof}
We claim that it is enough to prove the result for functions $f$ such that:
\begin{itemize}
\item[(i)]
there is an $r \geq 0$ such that, if the distance between $x$ and $y$ is greater than $r$, then $f(X,x,y)=0$,
\item[(ii)]
there is a $v >0$ such that, if one of the balls of radius $r$ centered at $x$ or $y$ has a volume greater than $v$, then $f(X,x,y)=0$,
\item[(iii)]
there is an $s>r$ such that $f(X,x,y)$ only depends on the intersection of the balls of radius $s$ around $x$ and $y$ in $X$, bipointed at $x$ and $y$,
\item[(iv)]
$f$ is bounded and uniformly continuous for the (bipointed) GHP distance.
\end{itemize}
Note that assumption (iv) makes sense even if we consider the GHP (and not local GHP) distance. Indeed, by assumption (iii), we may see $f(X,x,y)$ as a function of $B_s(X,x)$, so we do not need the local GHP distance. To show our claim, assume the theorem is true for any $f$ satisfying (i), (ii), (iii) and (iv). By the monotone convergence theorem, it is true for all indicator functions of open events satisfying (i), (ii) and (iii). By the monotone class theorem, it is true for the indicator functions of any event satisfying (i), (ii) and (iii). Now let $A$ be an event whose indicator function satisfies (i) and (ii). By the monotone class theorem again, the event $A$ can be approximated by events whose indicator function satisfies (i), (ii) and (iii). More precisely, for any $\eps>0$, there is an event $B$ whose indicator function satisfies (i), (ii) and (iii), and such that
\[ \mathbb{E} \Big[ \int_{\hp} \mathbbm{1}_{A \Delta B}(\hp, \rho, y) \,\mu_{\hp}(\mathrm{d}y) \Big] \leq \eps. \]
Hence, we can get rid of assumption (iii). We can then get rid of assumptions (i) and (ii) by monotone convergence. Therefore, the result is true for any indicator function, and finally for any nonnegative measurable function.

We now prove the theorem for a function $f$ satisfying (i), (ii), (iii) and (iv). The idea is to use the unimodularity of (a variant of) $\mathbb{T}_{\lambda_n}$, and take the scaling limit.
Let $(\lambda_n)$ be a sequence satisfying the assumptions of Theorem \ref{mainthm}. We first remark that the triangulations $\mathbb{T}_{\lambda_n}$ are invariant by rerooting along the simple random walk. This is the type-$I$ analog of (a part of) Proposition 9 in \cite{CurPSHIT}, and the proof is exactly the same. Moreover, invariance along the simple random walk and unimodularity are closely related by Proposition 2.5 of \cite{BCstationary}. More precisely, we write $\widehat{\mathbb{T}}_{\lambda_n}$ for the map $\mathbb{T}_{\lambda_n}$ biased by the inverse of the degree of its root $\rho_n$. Then $\widehat{\mathbb{T}}_{\lambda_n}$ is unimodular. Hence, we have
\[ \mathbb{E} \left[ \frac{1}{3n^4} \sum_{y \in \mathbb{T}_{\lambda_n}} f \Big( \frac{1}{n} \widehat{\mathbb{T}}_{\lambda_n}, \rho_n, y \Big) \right] = \mathbb{E} \left[ \frac{1}{3n^4} \sum_{y \in \mathbb{T}_{\lambda_n}} f \Big( \frac{1}{n} \widehat{\mathbb{T}}_{\lambda_n}, y, \rho_n \Big) \right], \]
where $\rho_n$ is the root vertex of $\widehat{\mathbb{T}}_{\lambda_n}$. We may restrict ourselves to $y \in B_{rn} (\widehat{\mathbb{T}}_{\lambda_n})$ thanks to assumption (i). By the definition of $\widehat{\mathbb{T}}_{\lambda_n}$, the last equation can be rewritten
\[ \mathbb{E} \left[ \frac{1}{ \deg (\rho_n)} \frac{1}{3n^4} \sum_{y \in B_{rn} (\mathbb{T}_{\lambda_n})} f \Big( \frac{1}{n} \mathbb{T}_{\lambda_n}, \rho_n, y \Big) \right]= \mathbb{E} \left[ \frac{1}{ \deg (\rho_n)} \frac{1}{3n^4} \sum_{y \in B_{rn} (\mathbb{T}_{\lambda_n})} f \Big( \frac{1}{n} \mathbb{T}_{\lambda_n},y, \rho_n \Big) \right]. \]

Hence, in order to prove the proposition, it is enough to prove that the left-hand side converges as $n \to +\infty$ to the left-hand side in the statement of the proposition, multiplied by $\mathbb{E} \big[ \frac{1}{D} \big]$. The proof of the same fact for the right-hand side will be similar. By the Skorokhod representation theorem, we may assume the convergence in Lemma \ref{rootandLGHP} is almost sure. By the dominated convergence theorem (the domination follows from assumption (ii) and the fact that $f$ is bounded), it is enough to prove
\begin{equation}
\frac{1}{3n^4} \sum_{y \in B_{rn} (\mathbb{T}_{\lambda_n})} f \Big( \frac{1}{n} \mathbb{T}_{\lambda_n}, \rho_n, y \Big) \xrightarrow[n \to +\infty]{a.s.}  \int_{B_r(\hp)} f(\hp, \rho, y) \mu_{\hp}(dy).
\end{equation}
This follows from assumptions (iii) and (iv) and the GHP convergence of $\frac{1}{n} B_{(r+s)n}(\mathbb{T}_{\lambda_n})$ to $B_{r+s}(\hp)$.
\end{proof}

\begin{rem}
The same result is true for the Brownian plane. The proof is essentially the same, the only change is that we need to use Proposition \ref{convergenceBP} instead of Theorem \ref{mainthm}.
\end{rem}

\subsection{Identification of the perimeter and volume processes}

We first explain what we mean by biasing a process by a martingale, which will be used a lot in what follows. Let $X$ be an adapted process, and let $M$ be a martingale for the underlying filtration with $\mathbb{E} \big[ M_0 \big]=1$. We say that a process $Y$ is the process $X$ biased by $M$ if for all $r_0>0$, the process $(Y_r)_{0 \leq r \leq r_0}$ has the same distribution as $(X_r)_{0 \leq r \leq r_0}$ biased by $M_{r_0}$. The martingale property of $M$ shows the consistency for different values of $r_0$.

The hyperbolic Brownian plane is a biased version of the Brownian plane. Hence, it is naturally equipped with a perimeter and a volume process inherited from those of the Brownian plane. We denote by $\big( P_r^h \big)_{r \geq 0}$ and $\big( V_r^h \big)_{r \geq 0}$ the perimeter and volume processes of $\hp$. More precisely, the proof of Theorem \ref{mainthm} gives the following joint convergences in distribution:
\[
\left \{
\begin{array}{lclcl}
\frac{1}{n} \mathbb{T}_{\lambda_n} &\xrightarrow[n \to +\infty]{}& \hp,\\
\Big( \frac{1}{n^2} \big| \partial B_{rn}^{\bullet}(\mathbb{T}_{\lambda_n}) \big| \Big)_{r \geq 0} &\xrightarrow[n \to +\infty]{}& \big( \frac{3}{2} P_r^h \big)_{r \geq 0}, \\
\Big( \frac{1}{n^4} \big| B_{rn}^{\bullet}(\mathbb{T}_{\lambda_n}) \big| \Big)_{r \geq 0} &\xrightarrow[n \to +\infty]{}& \big( 3 V_r^h \big)_{r \geq 0}.
\end{array}
\right.
\]
As previously, the first convergence is for the local GHP distance and the other two for the Skorokhod topology. Moreover, for all $r_0>0$, the triplet $\Big( \overline{B_{r_0}}(\hp), \big( P_r^h \big)_{0 \leq r \leq r_0}, \big( V_r^h \big)_{0 \leq r \leq r_0}  \Big)$ has the same distribution as $\Big( \overline{B_{r_0}}(\p), \big( P_r \big)_{0 \leq r \leq r_0}, \big( V_r \big)_{0 \leq r \leq r_0}  \Big)$ biased by $\varphi(P_{2r_0}, V_{2r_0})$.

This description implies that these two triplets have the same a.s. properties as for the Brownian plane. In particular, $P^h$ and $V^h$ are both càdlàg processes and can be expressed as measurable functions of $\hp$. Indeed, we have $V_r^h=|\overline{B_r}(\hp)|$ for all $r \geq 0$, and Proposition 1.1 of \cite{CLGHull} (already recalled in \eqref{perimeter_as_limit}) gives the convergence
\[ \frac{1}{\eps^2} |B_{r+\eps} (\hp) \backslash \overline{B_r} (\hp) | \xrightarrow[\eps \to 0]{} P_r^h\]
in probability.

On the other hand, the perimeter and volume of the map $B_{rn}^{\bullet}(\mathbb{T}_{\lambda_n})$ only depend on $B_{rn}^{\bullet}(\mathbb{T}_{\lambda_n})$. Hence, if we apply the proof of Theorem \ref{mainthm} to the pair $\Big( \frac{1}{n^2} \big| \partial B_{rn}^{\bullet}(\mathbb{T}_{\lambda_n}) \big|, \frac{1}{n^4} \big| B_{rn}^{\bullet}(\mathbb{T}_{\lambda_n}) \big| \Big)_{r \geq 0}$ instead of the triplet, we only need to bias by $\varphi(P_{r_0}, V_{r_0})$ instead of $\varphi(P_{2r_0}, V_{2r_0})$. We obtain the following result.

\begin{lem}\label{biasedtwoprocesses}
The pair of processes $(P^h, V^h)$ has the same distribution as $(P,V)$ biased by $\varphi (P,V)$.
\end{lem}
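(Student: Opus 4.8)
The plan is to replay the proof of Theorem~\ref{mainthm}, with the pair of perimeter and volume processes of the maps $B_{rn}^{\bullet}(\mathbb T_{\lambda_n})$ playing the role that $\overline{B_{rn}}(\mathbb T_{\lambda_n})$ played there. The one structural point to settle first is the elementary observation that, for every $r_0>0$ and every triangulation $m$, the whole pair of processes $\big(|\partial B_{rn}^{\bullet}(m)|,|B_{rn}^{\bullet}(m)|\big)_{0\le r\le r_0}$ is a measurable function of the \emph{single} finite map $B_{r_0n}^{\bullet}(m)$: since any geodesic of $m$ from the root to a vertex at distance $\le rn\le r_0n$ never leaves $B_{r_0n}^{\bullet}(m)$, one has $B_{rn}^{\bullet}(m)=B_{rn}^{\bullet}\big(B_{r_0n}^{\bullet}(m)\big)$ for all $r\le r_0$, so the perimeters and volumes of the $B_{rn}^{\bullet}(m)$, $r\le r_0$, are read off $B_{r_0n}^{\bullet}(m)$. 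This is exactly what produces the factor-$2$ improvement: $\overline{B_{rn}}(m)$ as a metric space is only a function of $B_{2rn}^{\bullet}(m)$, whence the density $\varphi(P_{2r},V_{2r})$ in Theorem~\ref{mainthm}.

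Next I would fix $r_0>0$, at which $P$ and $V$ almost surely have no jump, and a bounded continuous nonnegative functional $F$ on the Skorokhod space $D([0,r_0],\R^2)$. The Markov property of Definition~\ref{defMarkov} gives $\P\big(B_{r_0n}^{\bullet}(\mathbb T_{\lambda})=t\big)=C_{|\partial t|}(\lambda)\lambda^{|t|}$ for every possible value $t$, so the law of $B_{r_0n}^{\bullet}(\mathbb T_{\lambda_n})$ is absolutely continuous with respect to that of $B_{r_0n}^{\bullet}(\mathbb T)$, with a density depending on $B_{r_0n}^{\bullet}$ only through its perimeter and volume — namely the ratio $\varphi_{\lambda_n}(p,v):=C_p(\lambda_n)\lambda_n^{v}/C_p(\lambda_c)\lambda_c^{v}$ already used in the proof of Theorem~\ref{mainthm}. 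Combining this with the observation above I would write
\[
\E\Big[F\big(\widehat X_n^{\lambda_n}\big)\Big]=\E\Big[\varphi_{\lambda_n}\big(|\partial B_{r_0n}^{\bullet}(\mathbb T)|,|B_{r_0n}^{\bullet}(\mathbb T)|\big)\,F\big(\widehat X_n\big)\Big],
\]
where $\widehat X_n=\big(\tfrac1{n^2}|\partial B_{rn}^{\bullet}(\mathbb T)|,\tfrac1{n^4}|B_{rn}^{\bullet}(\mathbb T)|\big)_{0\le r\le r_0}$ and $\widehat X_n^{\lambda_n}$ is the same object built from $\mathbb T_{\lambda_n}$.

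Then I would pass to the limit exactly as in the proof of Theorem~\ref{mainthm}. By Theorem~\ref{joint} and the Skorokhod representation theorem one may assume $\widehat X_n\to\big(\tfrac32 P_r,3V_r\big)_{0\le r\le r_0}$ almost surely for the Skorokhod distance; as $r_0$ is a continuity point, the rescaled perimeter and volume of $B_{r_0n}^{\bullet}(\mathbb T)$ converge a.s.\ to $\tfrac32 P_{r_0}$ and $3V_{r_0}$, so Proposition~\ref{density}, applied realisation by realisation, gives $\varphi_{\lambda_n}(\cdots)\to\varphi(P_{r_0},V_{r_0})$ a.s. With the continuity of $F$, the right-hand integrand converges a.s.\ to $\varphi(P_{r_0},V_{r_0})F\big((\tfrac32 P_r,3V_r)_{0\le r\le r_0}\big)$, so Fatou gives $\liminf_n\E[F(\widehat X_n^{\lambda_n})]\ge\E\big[\varphi(P_{r_0},V_{r_0})F\big((\tfrac32 P_r,3V_r)_{0\le r\le r_0}\big)\big]$; on the other hand the left-hand side converges to $\E\big[F\big((\tfrac32 P_r^h,3V_r^h)_{0\le r\le r_0}\big)\big]$ by the weak convergence of the pair of processes recorded just before the lemma. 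Applying this inequality to $M-F$ for a bound $M\ge F$, and using that $\varphi_{\lambda_n}$ is a probability density together with $\E[\varphi(P_{r_0},V_{r_0})]=1$ (Lemma~\ref{mass}), yields the reverse inequality, hence equality of the two expectations for all such $F$. This says exactly that $(P^h,V^h)$ restricted to $[0,r_0]$ has the law of $(P,V)$ restricted to $[0,r_0]$ biased by $\varphi(P_{r_0},V_{r_0})$; since $r_0$ is arbitrary and $(P^h,V^h)$ is a genuine process, this is the claim (and along the way one learns that $\big(\varphi(P_r,V_r)\big)_{r\ge0}$ is a martingale).

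The only genuinely delicate point will be the bookkeeping around Proposition~\ref{density}: making sure the normalising constants $\tfrac32$, $3$ and the arguments of $\varphi$ match up so that the discrete density really converges to $\varphi(P_{r_0},V_{r_0})$ and not to a dilate of it. Everything else — the measurability observation and the Fatou/total-mass argument — is a routine transcription of what was done for Theorem~\ref{mainthm}.
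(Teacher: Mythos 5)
Your proposal is correct and follows the same approach the paper uses (which the paper states in one sentence: the perimeter and volume of the map $B_{rn}^{\bullet}$ are functions of $B_{rn}^{\bullet}$ itself rather than of $B_{2rn}^{\bullet}$, so replaying the proof of Theorem~\ref{mainthm} for the pair of processes replaces the bias $\varphi(P_{2r_0},V_{2r_0})$ by $\varphi(P_{r_0},V_{r_0})$). Your version spells out the measurability observation, the discrete density identity via Definition~\ref{defMarkov}, and the Fatou/total-mass argument in detail, all of which match the intended argument and the normalising constants in Theorem~\ref{joint} and Proposition~\ref{density}.
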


The goal of this section is to identify the two processes $P^h$ and $V^h$ in a more convenient way as expressed in Theorem \ref{identification}. Before moving on to the proof of Theorem \ref{identification}, we recall the description of $(P,V)$ given by \cite{CLGHull}. Let $Z$ be the critical continuous-state branching process with branching mechanism $\sqrt{\frac{8}{3}} \lambda^{3/2}$ for $\lambda>0$. We also recall that the measure $\nu$ is defined by $\nu(\mathrm{d}x)=\frac{e^{-1/2x}}{\sqrt{2 \pi x^5}} \mathbbm{1}_{x>0}\,\mathrm{d}x$. Then we can restate some results of \cite{CLGHull} as follows.

\begin{thm}[\cite{CLGHull}, Proposition 1.2 (ii) and Theorem 1.3]\label{CLG_identification}\
\vspace{-2mm}
\begin{itemize}
\item[1)]
The perimeter process $P$ of the Brownian plane has the same distribution as the time-reversal of $Z$, started from $+\infty$ at time $-\infty$, and conditioned to die at time $0$.
\vspace{-1mm}
\item[2)]
Conditionally on $P$, the process $V$ has the same distribution as the process
\[ \Big( \sum_{t_i \leq r} \left( \Delta P_{t_i} \right)^2 \xi_i \Big)_{r \geq 0} ,\]
where $(t_i)$ is a measurable enumeration of the jumps of $P$, and the variables $\xi_i$ are i.i.d. with distribution $\nu$.
\end{itemize}
\end{thm}

We will first prove the second part of Theorem \ref{identification}, that is, we determine the distribution of $V^h$ conditionally on $P^h$. We recall that for all $\delta>0$, the measure $\nu_{\delta}$ on $\R$ is defined by 
\[ \nu_{\delta}(\mathrm{d}x)= \frac{\delta^3 e^{2 \delta}}{1+2 \delta} \frac{e^{-\frac{\delta^2}{2x}-2x}}{\sqrt{2 \pi x^5}} \mathbbm{1}_{x>0} \, \mathrm{d}x.\]
If $\xi^h(\delta)$ is a random variable with distribution $\nu_{\delta}$, we have, for all $\beta \geq 0$,
\begin{equation}\label{Laplacenu}
\mathbb{E} \big[ e^{-\beta \xi^h(\delta)} \big]=\frac{(1+\delta \sqrt{4+2\beta}) e^{-\delta \sqrt{4+2\beta}}}{(1+2 \delta)e^{-2\delta}}.
\end{equation}
Notice for further use that $\xi^h(\delta)$ has the same distribution as $\delta^2 \xi$ biased by $e^{-2\delta^2 \xi}$, where $\xi$ is a random variable with density $\nu$.

\begin{proof}[Proof of Theorem \ref{identification}]
We fix $r_0>0$. We write $D([0,r_0])$ for the Skorokhod space on $[0,r_0]$. We write $g(p)=e^p \int_0^1 e^{-3x^2 p} \mathrm{d}x$. Let $t_1, t_2, \dots$ (resp. $t_1^h, t_2^h, \dots$) be the (random) jump times of the process $P$ (resp. $P^h$) up to time $r_0$, ordered by decreasing size of jumps $|\Delta P_{t_i}|$. Let $f$ be any measurable function $f: D([0,r_0]) \longrightarrow \R^+$ and let $u_1, u_2, \dots \geq 0$. Since $V$ is a pure jump process and only jumps at jump times of $P$, by Lemma \ref{biasedtwoprocesses} we have
\begin{equation} \label{vh_to_v}
\mathbb{E} \Big[ f \big( (P^h_r)_{0 \leq r \leq r_0} \big) \exp \Big( - \sum_{i \geq 0} u_i |\Delta V^h_{t_i^h}| \Big) \Big] = \mathbb{E} \Big[  f \big( (P_r)_{0 \leq r \leq r_0} \big) g(P_{r_0})  \exp \Big( - \sum_{i \geq 0} (u_i+2) |\Delta V_{t_i}| \Big)  \Big].
\end{equation}

By Theorem \ref{CLG_identification}, conditionally on $P$, the jumps $\Delta V_{t_i}$ are independent, distributed as $\big( \Delta P_{t_i} \big)^2 \xi$, where $\xi$ has law $\nu$. Hence, the last display can be rewritten as
\begin{equation} \label{vh_to_v2}
\mathbb{E} \bigg[  f \big( (P_r)_{0 \leq r \leq r_0} \big) g(P_{r_0}) \prod_{i \geq 0} \mathbb{E} \big[ e^{-2 (\Delta P_{t_i})^2 \xi} | \Delta P_{t_i} \big]
\frac{\mathbb{E} \big[ e^{-(u_i+2) (\Delta P_{t_i})^2 \xi} | \Delta P_{t_i}  \big]}
{\mathbb{E} \big[ e^{-2 (\Delta P_{t_i})^2 \xi} | \Delta P_{t_i}  \big]}
 \bigg].
\end{equation}

From the distribution of $\xi$ we compute easily, for $\alpha \geq 0$,
\[ \mathbb{E} \big[ e^{-2 \alpha^2 \xi} \big]=(1+2\alpha)e^{-2\alpha}.\]
By combining this with Equation \eqref{Laplacenu}, \eqref{vh_to_v2} becomes
\[ \mathbb{E} \Big[ f \big( (P_r)_{0 \leq r \leq r_0} \big) g(P_{r_0}) \prod_{i \geq 0} (1+2|\Delta P_{t_i}| ) e^{-2 |\Delta P_{t_i}|} \prod_{i \geq 0} \mathbb{E} \big[ e^{-u_i \xi^h(|\Delta P_{t_i}|)} | \Delta P_{t_i}  \big] \Big],\]
where $\xi^h(\delta)$ has law $\nu_{\delta}$.

This expression shows two things. First, it proves point 2) of Theorem \ref{identification}, that is, conditionally on $(P^h_r)_{0 \leq r \leq r_0}$, the jumps $\Delta V^h_{t^h_i}$ are independent and of law $\nu_{|\Delta P^h_{t_i^h}|}$. Second, by setting $u_i=0$ for every $i$, it proves that the density of the process $(P_r^h)_{0 \leq r \leq r_0}$ with respect to $(P_r)_{0 \leq r \leq r_0}$ is given by
\begin{equation}\label{densityP}
g(P_{r_0}) \prod_{i \geq 0} (1+2 |\Delta P_{t_i}|)e^{-2|\Delta P_{t_i}|}.
\end{equation}

We now move on to characterizing the law of $P^h$ in a nicer way. Since we know by Theorem \ref{CLG_identification} that $P$ is a reversed branching process with mechanism $\sqrt{\frac{8}{3}} u^{3/2}$, it seems natural to first study the effect of the density \eqref{densityP} on its associated Lévy process.
So let $S$ be the spectrally \emph{positive} $\frac{3}{2}$-stable Lévy process. We normalize it in such a way that, for all $t,u \geq 0$, we have
\[\mathbb{E} \big[e^{-u S_t} \big]=\exp \Big( \sqrt{\frac{8}{3}} t u^{3/2} \Big)=\exp \Big(  t \int_0^{+\infty} (e^{-ux}-1+ux) \mu(\mathrm{d}x) \Big),\]
where $\mu(\mathrm{d}x)=\sqrt{\frac{3}{2\pi}} x^{-5/2} \mathbbm{1}_{x>0}\,\mathrm{d}x$.

Let $(t_i)$ be a measurable enumeration of the jumps of $S$ and $M_t=e^{-S_t} \prod_{t_i \leq t} (1+2 |\Delta S_{t_i}|) e^{-2 |\Delta S_{t_i}|}$ for all $t \geq 0$.
\begin{lem}
The process $M$ is a martingale with respect to the natural filtration associated to $S$.
\end{lem}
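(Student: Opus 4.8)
The plan is to regularise by truncating the small jumps of $S$ and to apply the exponential formula for Poisson random measures. Let $N(ds,dx)$ be the Poisson random measure of jumps of $S$, with intensity $ds\,\mu(dx)$, so that by the Lévy--Itô decomposition $S_t=\int_{[0,t]\times(0,\infty)}x\,\widetilde N(ds,dx)$ with $\widetilde N=N-ds\,\mu$. For $\eps\in(0,1]$ set $S^\eps_t=\int_{[0,t]\times(\eps,\infty)}x\,N(ds,dx)-t\int_\eps^\infty x\,\mu(dx)$ and
\[ M^\eps_t=e^{-S^\eps_t}\prod_{t_i\le t,\ \Delta S_{t_i}>\eps}(1+2\Delta S_{t_i})e^{-2\Delta S_{t_i}}. \]
Since the jump part of $S^\eps$ has only finitely many jumps on compacts, one rewrites this as $M^\eps_t=\exp\!\big(\int_{[0,t]\times(\eps,\infty)}\log\!\big((1+2x)e^{-3x}\big)\,N(ds,dx)+t\int_\eps^\infty x\,\mu(dx)\big)$.

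First I would fix $\eps$ and show that $M^\eps$ is a martingale up to a deterministic exponential factor. Because $S$ has stationary independent increments and $x\mapsto(1+2x)e^{-3x}$ is bounded on $(\eps,\infty)$ while $\mu$ is finite there, the exponential formula for Poisson random measures yields, for $s\le t$, that $M^\eps_t/M^\eps_s$ is independent of $\mathcal F_s$, has the law of $M^\eps_{t-s}$, and
\[ \mathbb E[M^\eps_t]=\exp\!\Big(t\int_\eps^\infty g(x)\,\mu(dx)\Big),\qquad g(x):=(1+2x)e^{-3x}-1+x. \]
Consequently $\widetilde M^\eps_t:=M^\eps_t\exp\!\big(-t\int_\eps^\infty g\,d\mu\big)$ is a genuine martingale for the natural filtration of $S$.

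The next step is the identity $\int_0^\infty g(x)\,\mu(dx)=0$. Writing $g(x)=(e^{-3x}-1+3x)+2x(e^{-3x}-1)$ and using the two elementary identities $\int_0^\infty(e^{-ax}-1+ax)x^{-5/2}dx=\tfrac{4\sqrt\pi}{3}a^{3/2}$ and $\int_0^\infty(1-e^{-ax})x^{-3/2}dx=2\sqrt{\pi a}$ (each obtained by one integration by parts), the two contributions to $\int_0^\infty g(x)x^{-5/2}dx$ are $4\sqrt{3\pi}$ and $-4\sqrt{3\pi}$, so they cancel; hence $\int_0^\infty g\,d\mu=0$. In particular $c_\eps:=\int_\eps^\infty g\,d\mu=-\int_0^\eps g\,d\mu\to0$ as $\eps\to0$, since $g(x)=O(x^2)$ near $0$.

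Finally I would let $\eps\to0$. By Lévy--Itô, $S^\eps_t\to S_t$ a.s., and since $\sum_{t_i\le t}(\Delta S_{t_i})^2<\infty$ a.s., the series $\sum_{t_i\le t}\big(\log(1+2\Delta S_{t_i})-2\Delta S_{t_i}\big)$ converges absolutely; thus $M^\eps_t\to M_t$ a.s. To promote this to convergence of the martingale identity, I would prove a uniform $L^{1+\delta}$ bound for small $\delta>0$: the exponential formula gives $\mathbb E[(M^\eps_t)^{1+\delta}]=\exp\!\big(t\int_\eps^\infty[((1+2x)e^{-3x})^{1+\delta}-1+(1+\delta)x]\,d\mu\big)$, whose integrand is $O(x^2)$ near $0$ and $O(x)$ at infinity, hence integrable against $\mu$ uniformly in $\eps\le1$. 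This makes $\{M^\eps_t\}_{\eps\le1}$ uniformly integrable, so $M^\eps_t\to M_t$ in $L^1$; combining $\mathbb E[\widetilde M^\eps_t\mid\mathcal F_s]=\widetilde M^\eps_s$ with $c_\eps\to0$ and the $\mathcal F_s$-measurability of $M_s$ gives $\mathbb E[M_t\mid\mathcal F_s]=M_s$ and $\mathbb E[M_t]=1$. The crux is exactly this limiting step: the two factors $e^{-S_t}$ and $\prod(1+2\Delta S_{t_i})e^{-2\Delta S_{t_i}}$ are individually ill-behaved (the drift compensating the small jumps of $S$ is infinite), so the cancellation $\int g\,d\mu=0$ together with the uniform $L^{1+\delta}$ estimate is what makes the truncation argument close.
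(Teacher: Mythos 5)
Your proof is correct and follows essentially the same strategy as the paper: truncate the small jumps, apply the exponential formula for Poisson point processes, and pass to the limit by uniform integrability, the whole argument resting on the vanishing of the exponent at $u=1$ (your identity $\int_0^{+\infty} g\,\mathrm{d}\mu=0$ is exactly the paper's $\psi(1)=0$, which you verify by an explicit computation instead of the closed form of $\psi$). The only minor differences are that the paper first reduces to $\mathbb{E}[M_t]=1$ via the Lévy property of $S$ and obtains uniform integrability from an $L^2$ bound using $f\leq 1$, whereas you prove the (corrected) martingale identity at the truncated level and use an $L^{1+\delta}$ bound; both routes are valid.
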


\begin{proof}
In the whole proof we will write $f(x)=(1+2x)e^{-2x}$. To prove the lemma, we first note that $f(x) \leq 1$ for all $x \geq 0$, so the product defining $M$ is always well-defined, and $\mathbb{E} \big[ M_t \big] \leq \mathbb{E} \big[ e^{-S_t} \big] <+\infty$ for all $t$. Let $(\mathscr{F}_t)_{t \geq 0}$ be the natural filtration associated to $S$. Since $S$ is a Lévy process, it is easy to see that, if $s \leq t$, then $\mathbb{E} \big[ M_t | \mathscr{F}_s \big]=M_s \mathbb{E} \big[ M_{t-s} \big]$. Hence, it is enough to prove that $\mathbb{E} \big[ M_t \big]=1$ for all $t \geq 0$.

We claim that, for all $u \geq 0$ and $t \geq 0$, we have
\begin{equation}\label{LaplaceM}
\mathbb{E} \Big[ e^{-u S_t} \prod_{t_i \leq t} f(\Delta S_{t_i}) \Big]=e^{t \psi(u)},
\end{equation}
where $\psi(u)=\int_0^{+\infty} (e^{-ux} f(x) -1 + ux) \mu(\mathrm{d}x)=\sqrt{\frac{8}{3}} \frac{u^2+u-2}{\sqrt{u+2}}$. Once this is proved, we have in particular $\psi(1)=0$ and the lemma follows. To prove \eqref{LaplaceM}, for $\eps>0$, we write
\[S_t^{\eps}=\underset{\underset{\Delta S_{t_i} \geq \eps}{t_i \leq t}}{\sum} \Delta S_{t_i}-t \int_{\eps}^{+\infty}x\,\mu(\mathrm{d}x).\]
We know that $S_t$ is the a.s. limit of $S_t^{\eps}$ as $\eps \to 0$. Moreover, for every $\eps$, we have $$\mathbb{E} \big[ e^{-2 u S_t^{\eps}} \big]=\exp \Big(  t \int_{\eps}^{+\infty} (e^{-2ux}-1+2ux) \mu(\mathrm{d}x) \Big) \leq \E \big[ e^{-2uS_t} \big].$$ Hence, since $f \leq 1$, the variables $e^{-u S^{\eps}_t} \prod_{\underset{\Delta S_{t_i} \geq \eps}{t_i \leq t}} f( \Delta S_{t_i})$ are bounded in $L^2$ as $\eps \to 0$. Therefore, they are uniformly integrable, so the left-hand side of \eqref{LaplaceM} is equal to
\[ \lim_{\eps \to 0} \mathbb{E} \Big[ e^{-u S^{\eps}_t} \prod_{\underset{\Delta S_{t_i} \geq \eps}{t_i \leq t}} f( \Delta S_{t_i}) \Big] = \lim_{\eps \to 0} e^{tu \int_{\eps}^{+\infty}x \mu(\mathrm{d}x)} \mathbb{E} \Big[ \prod_{\underset{\Delta S_{t_i} \geq \eps}{t_i \leq t}} e^{-u \Delta S_{t_i}} f( \Delta S_{t_i}) \Big].\]
By the exponential formula for Poisson point processes, the expectation in the right-hand side is equal to 
\[\exp \Big( t \int_{\eps}^{+\infty} \big( e^{-u x} f(x)-1 \big) \mu(\mathrm{d} x) \Big),\]
which proves \eqref{LaplaceM} by letting $\eps \to 0$.
\end{proof}

Since $M$ is a martingale with $M_0=e^{-S_0}$, we may consider the process $S$ biased by $e^{S_0} M$. We denote it by $S^h$. From the form of $M$ it is easy to prove that $S^h$ is a Lévy process. Moreover, by \eqref{LaplaceM}, it holds for all $u \geq 0$ that
\begin{equation} \label{exposantSh}
\mathbb{E} \big[ e^{-uS^h_t} \big]=\mathbb{E} \big[ M_t e^{-u S_t} \big]=e^{t \psi(u+1)}=\exp \big( \sqrt{\frac{8}{3}} u \sqrt{u+3} \big).
\end{equation}
We can also compute the jump measure of $S^h$, which is given by
\begin{equation}\label{muh}
\mu^h(\mathrm{d}x)=(1+2x)e^{-2x} \mu(dx)=\sqrt{\frac{3}{2\pi}} \frac{1+2x}{x^{5/2}} e^{-2x} \mathbbm{1}_{x>0}\,\mathrm{d}x.
\end{equation} 

In order to study the continuous-state branching process (CSBP) associated to $S^h$ via the Lamperti transform, we consider the process $S^h$ started from $x>0$ and write $\tau=\inf \{t | S^h_t=0 \}$. Note that $\tau$ is a.s. finite since $S^h$ drifts to $-\infty$ and has only positive jumps. We claim that $(S^h_{t \wedge \tau})_{t \geq 0}$ under $\mathbb{P}_x$ has density $e^x M_{\tau}$ with respect to $(S_{t \wedge \tau})_{t \geq 0}$ under $\mathbb{P}_x$. To prove it, we write $\tau_n=\frac{\lceil 2^n \tau \rceil}{2^n}$. For all $t_1, \dots, t_k \geq 0$ and $f: \R^k \to \R$ bounded, we have
\begin{eqnarray*}
\mathbb{E} \big[ f(S^h_{t_1 \wedge \tau}, \dots, S^h_{t_k \wedge \tau}) \big] &=& \lim_{n \to +\infty} \mathbb{E} \big[ f(S^h_{t_1 \wedge \tau_n}, \dots, S^h_{t_k \wedge \tau_n}) \big]  \\
&=&  \lim_{n \to +\infty} \sum_{i=1}^{\infty} \mathbb{E} \big[ f(S^h_{t_1 \wedge \tau_n}, \dots, S^h_{t_k \wedge \tau_n}) \mathbbm{1}_{\tau_n=i/2^n} \big]\\
&=& \lim_{n \to +\infty} \sum_{i=1}^{\infty} \mathbb{E} \big[e^x M_{i/2^n} f(S_{t_1 \wedge \tau_n}, \dots, S_{t_k \wedge \tau_n}) \mathbbm{1}_{\tau_n=i/2^n} \big]\\
&=& \lim_{n \to +\infty} \mathbb{E} \big[e^x M_{\tau_n} f(S_{t_1 \wedge \tau_n}, \dots, S_{t_k \wedge \tau_n}) \big]\\
&=& \mathbb{E} \big[e^x M_{\tau} f(S_{t_1 \wedge \tau}, \dots, S_{t_k \wedge \tau}) \big],
\end{eqnarray*}
which proves the claim.

We now introduce $Z^h$, the CSBP with branching mechanism $\psi^h (u)=\sqrt{\frac{8}{3}} u \sqrt{u+3}$ that is associated with $S^h$ via the Lamperti transform. We also recall that $Z$ is the CSBP with branching mechanism $\sqrt{\frac{8}{3}} u^{3/2}$. Since the Lamperti transform is a measurable function of the Lévy process, the process $Z^h$ started from $x$ has density 
\begin{equation}\label{density_z_zh}
e^x \prod_{t_i} (1+2|\Delta Z_{t_i}|)e^{-2|\Delta Z_{t_i}|}
\end{equation}
with respect to $Z$ started from $x$.

We will now do the same construction as Curien and Le Gall in Section 2.1 of \cite{CLGHull} with this new branching mechanism. The semigroup of $Z^h$ is characterized as follows: for all $\lambda>0$ and $x,t \geq 0$ we have
\[ \mathbb{E}_x \big[ e^{-\lambda Z^h_t}\big]=e^{-x u_t^h(\lambda)},\]
where
\begin{equation}\label{equadiff}
\frac{\mathrm{d}u_t^h(\lambda)}{\mathrm{d}t}=-\sqrt{\frac{8}{3}} u_t^h(\lambda) \sqrt{u_t^h(\lambda)+3} \quad \mbox{ and } \quad u_0(\lambda)=\lambda.
\end{equation}
The solution of this equation is
\[u_t^h(\lambda)=\frac{3}{\sinh^2 \Big( \argsh \sqrt{\frac{3}{\lambda}}+\sqrt{2} t \Big)}.\]
This gives
\[ \mathbb{E}_x \big[ e^{-\lambda Z^h_t}\big] = \exp \bigg( -\frac{3x}{\sinh^2 \Big( \argsh \sqrt{\frac{3}{\lambda}}+\sqrt{2} t \Big)} \bigg) \]
and, by differentiating with respect to $\lambda$,
\begin{equation} \label{derivelaplace}
\mathbb{E}_x \big[ Z^h_t e^{-\lambda Z^h_t}\big] = \frac{3\sqrt{3} x}{\lambda\sqrt{\lambda+3}} \frac{\cosh \Big(\argsh \sqrt{\frac{3}{\lambda}}+\sqrt{2} t \Big)}{\sinh^3 \Big( \argsh \sqrt{\frac{3}{\lambda}}+\sqrt{2} t \Big)} \exp \Big( -\frac{3x}{\sinh^2 \Big( \argsh \sqrt{\frac{3}{\lambda}}+\sqrt{2} t \Big)} \Big).
\end{equation}

Let $\tau'=\inf \{ t \geq 0 | Z^h_t=0 \}$ be the extinction time of $Z^h$. By the above calculation, we have
\[ \mathbb{P}_x \big( \tau' \leq t \big)=\mathbb{P} \big( Z^h_t=0 \big)=\lim_{\lambda \to +\infty} \mathbb{E} \big[ e^{-\lambda Z^h_t} \big]=\exp \Big(-\frac{3x}{\sinh ^2 ( \sqrt{2} t) } \Big).\]
Hence, $\tau'$ has density
\begin{equation}\label{Phitx}
\Phi^h_t(x)=6\sqrt{2} x \frac{\cosh (\sqrt{2} t)}{\sinh^3 (\sqrt{2} t)} \exp \Big(-\frac{3x}{\sinh ^2 ( \sqrt{2} t) } \Big)
\end{equation}
with respect to the Lebesgue measure.

We now introduce the process $Z^h$ conditioned on extinction at a fixed time $\rho$. We denote by $q_t(x,\mathrm{d}y)$ the transition kernels of $Z^h$. The process $Z^h$ conditioned on extinction at time $\rho$ is the time-inhomogeneous Markov process indexed by $[0,\rho]$ whose transition kernel between times $s$ and $t$ is
\[\pi_{s,t}(x,\mathrm{d}y)=\frac{\Phi^h_{\rho-t}(y)}{\Phi^h_{\rho-s}(x)} q_{t-s}(x,\mathrm{d}y)\] 
for $x>0$ and $0 \leq s<t<\rho$, and
\[ \pi_{s, \rho}(x,\mathrm{d}y)=\delta_0(\mathrm{d}y).\]
As in \cite{CLGHull}, this interpretation can be justified by the fact that, for all $0<s_1< \dots < s_k$, the conditional distribution of $(Z^h_{s_1}, \dots , Z^h_{s_k})$ on $\mathbb{P}_x \big( \cdot | \rho \leq \tau' \leq \rho+\eps \big)$ converges to
\[\pi_{0,s_1}(x,\mathrm{d}y_1) \pi_{s_1,s_2}(y_1,\mathrm{d}y_2) \cdots  \pi_{s_{k-1},s_k}(y_{k-1},\mathrm{d}y_k) \]
as $\eps \to 0^+$.

We also recall that the extinction time of $Z$ has density $\Phi_t(x)=\frac{3x}{t^3} \exp \Big( -\frac{3x}{2t^2} \Big)$ (see Section 2.1 of \cite{CLGHull}). By combining this with the density \eqref{density_z_zh} for the nonconditioned processes, we get an absolute continuity relation for conditioned versions of $Z$ and $Z^h$. The process $Z^h$ started from $x$ and conditioned to die at time $\rho$ has density
\begin{equation} \label{densityX}
e^x \frac{\Phi_{\rho}(x)}{\Phi^h_{\rho}(x)} \prod_{t_i \geq 0} (1+2|\Delta Z_{t_i}|)e^{-2|\Delta Z_{t_i}|}
\end{equation}
with respect to $Z$ started from $x$ and conditioned to die at time $\rho$. To prove this properly, we just need to condition on $\rho \leq \tau' \leq \rho+\eps$ and let $\eps$ go to $0$.

We finally define $X^h$, which is a version of $Z^h$, starting from $+\infty$ at time $-\infty$, and conditioned to die exactly at time $0$. We can construct a process $(X^h_t)_{t \leq 0}$ with càdlàg paths and no negative jumps such that:
\begin{itemize}
\item[(i)]
$X^h_t>0$ for all $t<0$ and $X^h_0=0$ a.s.,
\item[(ii)]
$X^h_t \longrightarrow +\infty$ almost surely as $t \to -\infty$,
\item[(iii)]
for all $x$, if $T_x=\inf \{ t \leq 0 | X_t \leq x\}$, the process $(X^h_{(T_x+t) \wedge 0})_{t \geq 0}$ has the same distribution as the process $Z^h$ started from $x$.
\end{itemize}
Note that \cite{CLGHull} describes a process $X$ that is obtained from $Z$ in the same way $X^h$ is obtained from $Z^h$. Our $Z$ corresponds to the $X$ of \cite{CLGHull}, whereas our $X$ corresponds to $\widetilde{X}$. As in \cite{CLGHull}, we can get an explicit construction of $X^h$ by concatenating independent copies of $Z^h$ started from $n+1$ and killed when hitting $n$ for every $n \in \N$.

Proposition 4.4 of \cite{CLGHull} states that for any $\rho, x>0$, the process $(X_{t-\rho})_{0 \leq t \leq \rho}$ conditioned on $X_{-\rho}=x$ has the same distribution as $Z$ started from $x$ and conditioned to die exactly at time $\rho$. By the same proof, this also holds for $X^h$ and $Z^h$. Hence, $(X^h_{-t})_{0 \leq t \leq \rho}$ conditioned on $X^h_{-\rho}=x$ has the density \eqref{densityX} with respect to $(X_{-t})_{0 \leq t \leq \rho}$ conditioned on $X_{-\rho}=x$. Therefore, by the first point of Theorem \ref{CLG_identification}, the process $(X^h_{-t})_{0 \leq t \leq \rho}$ conditioned on $X^h_{-\rho}=x$ has the density \eqref{densityX} with respect to $(P_r)_{0 \leq r \leq \rho}$ conditioned on $P_{\rho}=x$. But by \eqref{densityP}, the process $(P^h_r)_{0 \leq r \leq \rho}$ conditioned on $P^h_{\rho}=x$ has a density of the form
\[ f(x, \rho) \prod_{t_i} (1+2|\Delta Z_{t_i}|)e^{-2|\Delta Z_{t_i}|}\]
with respect to $(P_r)_{0 \leq r \leq \rho}$ conditioned on $P_{\rho}=x$. Since the density must have expectation one, we must have $f(x, \rho)=e^x \frac{\Phi_{\rho}(x)}{\Phi^h_{\rho}(x)}$.

Hence, in order to prove that $P^h$ has the same distribution as $(X^h_{-t})_{t \geq 0}$, we only need to prove that these two processes have the same one-dimensional marginals. To this end, we will now compute the Laplace transform of the one-dimensional marginals of $X^h$.

\begin{lem}\label{unedimY}
For all $t \geq 0$ and $\lambda \geq 0$, we have
\[\mathbb{E} \big[ e^{-\lambda X^h_{-t}}\big] = \Big( 1+ \frac{\lambda}{3} \sinh^2 (\sqrt{2} t) \Big)^{-1} \Big( 1+ \frac{\lambda}{3} \tanh^2 (\sqrt{2} t) \Big)^{-1/2}. \]
\end{lem}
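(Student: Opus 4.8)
\emph{Proof strategy for Lemma~\ref{unedimY}.} The plan is to read off the law of $X^h_{-t}$ directly inside the branching process $Z^h$, using property (iii) of the construction together with the fact that $X^h$ is, heuristically, the time--reversal from its extinction of $Z^h$ started from $+\infty$. Fix $t>0$ and $\lambda\ge0$, write $\mathbb{P}_x$ for the law of $Z^h$ started from $x$, with extinction time $\zeta(x)$, and $q$ for the semigroup of $Z^h$. By property (iii) the process $\big(X^h_{(T_x+u)\wedge0}\big)_{u\ge0}$ has law $\mathbb{P}_x$; since $X^h_0=0$ and $X^h_s>0$ for $s<0$, its lifetime equals $-T_x$, so $-T_x$ has density $\Phi^h_\cdot(x)$ by \eqref{Phitx}, and $T_x\to-\infty$ almost surely as $x\to+\infty$. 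On the event $\{T_x<-t\}=\{\zeta(x)>t\}$ one has $X^h_{-t}=Z_{\zeta(x)-t}$ for that copy $Z$ of $Z^h$, while $\mathbb{P}(T_x\ge-t)=\mathbb{P}_x(\zeta(x)\le t)=\exp(-3x/\sinh^2(\sqrt2 t))\to0$; hence
\[ \mathbb{E}\big[e^{-\lambda X^h_{-t}}\big]=\lim_{x\to+\infty}\mathbb{E}_x\big[e^{-\lambda Z_{\zeta(x)-t}}\,\mathbbm{1}_{\zeta(x)>t}\big]. \]

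Next I would evaluate the right--hand side. Disintegrating over the value $r$ of $\zeta(x)$ (density $\Phi^h_r(x)$) and using the definition recalled just above of $Z^h$ conditioned to die at a fixed time (so that, given $\zeta(x)=r$, the law of $Z_{r-t}$ is $\Phi^h_t(y)\,\Phi^h_r(x)^{-1}\,q_{r-t}(x,\mathrm{d}y)$), an application of Fubini's theorem makes the factors $\Phi^h_r(x)$ cancel and leaves
\[ \mathbb{E}_x\big[e^{-\lambda Z_{\zeta(x)-t}}\,\mathbbm{1}_{\zeta(x)>t}\big]=\int_0^{+\infty}\!\!\int e^{-\lambda y}\,\Phi^h_t(y)\,q_s(x,\mathrm{d}y)\,\mathrm{d}s. \]
By \eqref{Phitx}, $\Phi^h_t(y)=\beta_t\,y\,e^{-\alpha_t y}$ with $\alpha_t=3/\sinh^2(\sqrt2 t)$ and $\beta_t=6\sqrt2\,\cosh(\sqrt2 t)/\sinh^3(\sqrt2 t)$, so this equals $\beta_t\int_0^{+\infty}\mathbb{E}_x\big[Z^h_s\,e^{-(\lambda+\alpha_t)Z^h_s}\big]\,\mathrm{d}s$. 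Applying \eqref{derivelaplace} with $\theta:=\lambda+\alpha_t$ in place of $\lambda$ and performing the change of variable $\phi=\argsh\sqrt{3/\theta}+\sqrt2 s$, one recognizes $\tfrac{\cosh\phi}{\sinh^3\phi}e^{-3x/\sinh^2\phi}=\tfrac1{6x}\tfrac{\mathrm{d}}{\mathrm{d}\phi}e^{-3x/\sinh^2\phi}$, so the integral telescopes to $\tfrac{\sqrt3}{2\sqrt2}\,(1-e^{-x\theta})\,(\theta\sqrt{\theta+3})^{-1}$ (using $\sinh^2(\argsh\sqrt{3/\theta})=3/\theta$), which tends to $\tfrac{\sqrt3}{2\sqrt2}\,(\theta\sqrt{\theta+3})^{-1}$ as $x\to+\infty$. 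Therefore
\[ \mathbb{E}\big[e^{-\lambda X^h_{-t}}\big]=\beta_t\cdot\frac{\sqrt3}{2\sqrt2}\cdot\frac{1}{(\lambda+\alpha_t)\sqrt{\lambda+\alpha_t+3}}. \]

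Finally I would simplify: writing $s=\sinh(\sqrt2 t)$, $c=\cosh(\sqrt2 t)$ and using $c^2=1+s^2$, one has $\lambda+\alpha_t=(\lambda s^2+3)/s^2$ and $\lambda+\alpha_t+3=(\lambda s^2+3c^2)/s^2$, while $\beta_t\,\sqrt3/(2\sqrt2)=3\sqrt3\,c/s^3$; cancelling the $s^3$ and factoring $\lambda s^2+3=3(1+\tfrac\lambda3 s^2)$ and $\lambda s^2+3c^2=3c^2(1+\tfrac\lambda3 s^2/c^2)$ yields exactly $\big(1+\tfrac\lambda3\sinh^2(\sqrt2 t)\big)^{-1}\big(1+\tfrac\lambda3\tanh^2(\sqrt2 t)\big)^{-1/2}$, since $s^2/c^2=\tanh^2(\sqrt2 t)$.

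The only genuinely delicate point is the first displayed identity: making rigorous that $X^h$ is recovered as the $x\to+\infty$ limit of the time--reversal of $Z^h$ started from $x$, read from its extinction time, and that the subsequent uses of Fubini's theorem and of the limit $x\to+\infty$ are licit. This is done exactly as the analogous one--dimensional marginal computation for the non-hyperbolic process $X$ in \cite{CLGHull}, the only changes being that $\Phi^h$ and the Laplace transform \eqref{derivelaplace} replace their critical counterparts; I would simply refer to that argument.
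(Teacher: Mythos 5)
Your proposal is correct and follows essentially the same route as the paper: both reduce the Laplace transform to $\lim_{x\to+\infty}\int_0^{+\infty}\mathbb{E}_x\big[e^{-\lambda Z^h_s}\Phi^h_t(Z^h_s)\big]\,\mathrm{d}s$ (deferring the delicate justification to the analogous argument for Proposition 1.2 (ii) of \cite{CLGHull}), then evaluate via \eqref{Phitx} and \eqref{derivelaplace}, integrate in $s$ by the same telescoping antiderivative, let $x\to+\infty$, and simplify with hyperbolic identities. Your intermediate and final expressions agree with the paper's.
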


\begin{proof}
Once again, the same computation for $X$ is performed in \cite{CLGHull}. By the exact same proof as in the beginning of the proof of Proposition 1.2 (ii) (section 4.1) in \cite{CLGHull}, we have
\[\mathbb{E} \big[ e^{-\lambda X^h_{-t}}\big] = \lim_{x \to +\infty} \mathbb{E}_x \Big[ \int_0^{+\infty} e^{-\lambda Z^h_s} \Phi^h_t (Z^h_s) \mathrm{d}s \Big]
= \lim_{x \to +\infty}  \int_0^{+\infty} \mathbb{E}_x \big[ e^{-\lambda Z^h_s} \Phi^h_t (Z^h_s) \big] \mathrm{d}s.\]

By \eqref{Phitx} and \eqref{derivelaplace}, one can compute $ \mathbb{E}_x \big[ e^{-\lambda Z^h_s} \Phi^h_t (Z^h_s) \big]$ exactly:
\begin{eqnarray*}
 \mathbb{E}_x \big[ e^{-\lambda Z^h_s} \Phi^h_t (Z^h_s) \big] &=& 6\sqrt{2} \frac{\cosh (\sqrt{2} t)}{\sinh^3 (\sqrt{2} t)} \mathbb{E}_x \Big[ Z^h_s \exp \Big( -\Big( \lambda+\frac{3}{\sinh^2 (\sqrt{2} t)} \Big) Z^h_s \Big) \Big]\\
&=& \frac{18\sqrt{6} x}{\Big( \lambda+\frac{3}{\sinh^2 (\sqrt{2} t)} \Big) \sqrt{\lambda+3 \coth^2 (\sqrt{2} t)}} \frac{\cosh (\sqrt{2} t)}{\sinh^3 (\sqrt{2} t)} \\
& \times & \frac{\cosh \Big(\argsh \sqrt{\frac{3 \sinh^2 (\sqrt{2} t)}{\lambda \sinh^2 (\sqrt{2} t) + 3}}+\sqrt{2} s \Big)}{\sinh^3 \Big(\argsh \sqrt{\frac{3 \sinh^2 (\sqrt{2} t)}{\lambda \sinh^2 (\sqrt{2} t) + 3}}+\sqrt{2} s \Big)}\\
& \times & \exp \Bigg( -\frac{3x}{\sinh^2 \Big( \argsh \sqrt{\frac{3 \sinh^2 (\sqrt{2} t)}{\lambda \sinh^2 (\sqrt{2} t) + 3}}+\sqrt{2} s \Big)} \Bigg).
\end{eqnarray*}

We can now integrate over $s \geq 0$ to get
\begin{eqnarray*}
\int_0^{+\infty} \mathbb{E}_x \big[ e^{-\lambda Z^h_s} \Phi^h_t (Z^h_s) \big] \mathrm{d}s &=& \frac{3 \sqrt{3}}{\Big( \lambda+\frac{3}{\sinh^2 (\sqrt{2} t)} \Big) \sqrt{\lambda+3 \coth^2 (\sqrt{2} t)}} \frac{\cosh (\sqrt{2} t)}{\sinh^3 (\sqrt{2} t)}\\
& \times & \Bigg( 1-\exp \Big( -x \Big( \lambda+\frac{3}{\sinh^2 (\sqrt{2} t)} \Big) \Big) \Bigg).
\end{eqnarray*}
As $x \to +\infty$, the last factor goes to $1$ and we get the claimed result.
\end{proof}

In remains to check that $P^h_t$ has indeed the same Laplace transform. This is obtained by combining Lemma \ref{biasedtwoprocesses} and the Laplace transforms of the variables $P_r$ and $V_r$ that are computed in Proposition 1.2 and 1.4 of \cite{CLGHull}. The computation is essentially the same as the proof of Lemma \ref{mass} and we omit it here. This completes the proof of Theorem \ref{identification}.
\end{proof}

\subsection{Asymptotics for the perimeter and volume processes}

\begin{proof}[Proof of Corollary \ref{asymptoticspv}]
Section 4.4 of \cite{CLGpeeling} gives another construction of the process $(P, V)$ via the Lamperti transform. We mimic this construction for $(P^h,V^h)$. We consider the Lévy process $S^h$ described by \eqref{exposantSh}, started from $x>0$. We write $\gamma$ for the first time at which $S^h$ hits $0$. For every $t \geq 0$, we also set
\[\tau_t=\inf \bigg\{ s \geq 0 \bigg| \int_0^s \frac{\mathrm{d}u}{S^h_u} \geq  t \bigg\}. \]
Finally, let $T=\int_0^{\gamma} \frac{\mathrm{d}u}{S^h_u}$. The process $\left( S^h_{\tau_t} \right)_{0 \leq t \leq T}$ is the \emph{Lamperti transform} of $S^h$, and has the same distribution as the branching process $Z^h$ started from $x$. We now introduce the time-reversal $\reflectbox{$S$}$ of $S^h$ (the notation $\reflectbox{$S$}$ is the mirror of $S$). It is the Lévy process with no positive jumps whose distribution is characterized by
\[\mathbb{E} \big[ e^{u \reflectbox{$S$}_t} \big]=\exp \Big( t \sqrt{\frac{8}{3}} u \sqrt{u+3} \Big) \mbox{\: for \:} u \geq 0.\]
Let also $\reflectbox{$S$}^+$ be the process $\reflectbox{$S$}$ conditionned to stay positive. We recall a classical time-reversal theorem (e.g. Theorem VII.18 of \cite{Ber96}). The process $S^h$ started from $x>0$ and killed when hitting $0$ is the time-reversal of the process $\reflectbox{$S$}^+$ stopped when hitting $x$ for the last time. By applying the Lamperti tranform, the time reversal of $X^h$, between its first passage at $x$ and $0$, is the Lamperti transform of $\reflectbox{$S$}^+$, stopped when hitting $x$ for the last time. But by item 1) of Theorem \ref{identification}, the time-reversal of $X^h$ is $P^h$. Hence, the process $P^h$ is the Lamperti transform of $\reflectbox{$S$}^+$. More precisely, for every $t \geq 0$, let
\[ \eta_t=\inf \bigg\{ s \geq 0 \bigg| \int_0^s \frac{\mathrm{d}u}{\reflectbox{$S$}^+_u} \geq  t \bigg\}. \]
Then $P^h$ has the same distribution as $\left( \reflectbox{$S$}^+_{\eta_t} \right)_{t \geq 0}$. Moreover, let $(s_i)$ be a measurable enumeration of the jumps of $\reflectbox{$S$}$. Conditionally on $\reflectbox{$S$}$, let $\big( \xi^h_i \big)$ be independent variables such that, for all $i$, the variable $\xi^h_i$ has distribution $\nu_{|\Delta \reflectbox{$S$}_{s_i}|}$. We write
\[L_t= \sum_{s_i \leq t} \xi^h_i. \]
Let also $L^+$ be the process obtained by performing the exact same operation on $\reflectbox{$S$}^+$ instead of $\reflectbox{$S$}$. Since the construction of $L^+$ from $\reflectbox{$S$}^+$ is the same as the construction of $V^h$ from $P^h$, we get
\[ \big( P_r^h, V_r^h \big)_{r \geq 0} \overset{(d)}{=} \big( \reflectbox{$S$}^+_{\eta_t}, L^+_{\eta_t} \big)_{t \geq 0}. \]

It is now easy to obtain an asymptotic estimation of $(P_r^h)$ through the study of $\reflectbox{$S$}$ and its conditioned version $\reflectbox{$S$}^+$. For all $t \geq 0$, we have
\begin{eqnarray*}
\mathbb{E} \big[ \reflectbox{$S$}_t \big] &=& \frac{d}{d u} \Big|_{u=0} \mathbb{E} \big[ e^{u \reflectbox{$S$}^+_t} \big]\\
&=& \frac{d}{d u} \Big|_{u=0} \exp \Big(t \sqrt{\frac{8}{3}} u \sqrt{u+3} \Big)\\
&=& 2 \sqrt{2} t.
\end{eqnarray*}

Moreover, it holds that $\mathbb{E} \big[ e^{u \reflectbox{$S$}_t} \big] <+\infty$ for all $t \geq 0$ and $u \geq -3$ (this follows easily from the definition of $S^h$ as $S$ biased by an explicit martingale and the fact that $\reflectbox{$S$}$ is its time-reversal). Hence, by a classic moderate deviation argument, we have almost surely $\frac{\reflectbox{$S$}_n}{n} = 2 \sqrt{2}+O(n^{-1/4})$ as $n \to +\infty$ for $n \in \N$. Moreover, let $c>0$ be such that $\mathbb{P} \big( |\reflectbox{$S$}_t| \leq c \big) \geq \frac{1}{2}$ for all $0 \leq t \leq 1$. By the strong Markov property, we have $\mathbb{P} \big( |\reflectbox{$S$}_1| \geq x-c \big) \geq \frac{1}{2} \mathbb{P} \big( \sup_{t \in [0,1]} |\reflectbox{$S$}_t| \geq x \big)$ for all $x>0$. Hence, the random variable $\sup_{t \in [0,1]} |\reflectbox{$S$}_t|$ has exponential tails on both sides, so for $n \in \N$ large enough and $0 \leq t \leq 1$, we have $|\reflectbox{$S$}_{n+t}-\reflectbox{$S$}_n| \leq n^{1/4}$. Hence, we have $\frac{\reflectbox{$S$}_t}{t} \overset{a.s.}{\underset{t \to +\infty}{=}} 2 \sqrt{2}+O(t^{-1/4})$. 

But the distribution of $(\reflectbox{$S$}^+_{t+1}-\reflectbox{$S$}^+_1)_{t \geq 0}$ is just that of $\reflectbox{$S$}$, conditioned on an event of positive probability ($\reflectbox{$S$}$ drifts to $+\infty$ so it has a positive probability never to hit $0$ after time $1$). Hence, we deduce from above that
\[\frac{\reflectbox{$S$}_t^+}{t} \overset{a.s.}{\underset{t \to +\infty}{=}} 2 \sqrt{2}+O(t^{-1/4}) \mbox{ }\mbox{, and so }\mbox{ } \frac{1}{\reflectbox{$S$}_t^+}  \overset{a.s.}{\underset{t \to +\infty}{=}} \frac{1}{2 \sqrt{2}\,t}+O(t^{-5/4}).\]
The integral of the error term converges, so there is a random variable $X_{\infty}$ such that $\int_0^s \frac{\mathrm{d}u}{\reflectbox{$S$}_u^+} = \frac{\ln{s}}{2\sqrt{2}} +X_{\infty}+o(1)$ a.s. It follows that $\frac{\ln{\eta_t}}{2\sqrt{2}} = t-X_{\infty}+o(1)$, so $\eta_t \sim e^{-2 \sqrt{2} X_{\infty}} e^{2\sqrt{2} t}$ a.s. We finally get
\[ \reflectbox{$S$}^+_{\eta_t} \sim 2 \sqrt{2} \eta_t \sim  2 \sqrt{2} e^{-2 \sqrt{2} X_{\infty}} e^{2 \sqrt{2} t} \]
a.s. when $t \to +\infty$, so there is a random variable $0<\mathcal{E}<+\infty$ such that $e^{-2\sqrt{2} r} P^h_r \xrightarrow[r \to +\infty]{a.s.} \mathcal{E}$.

To prove that $\frac{V_r^h}{P_r^h}$ converges a.s. to a deterministic constant, we first notice that $L$ is a nondecreasing Lévy process. By construction, we have
\[ \mathbb{E} \big[ L_1 \big]=\int_{0}^{+\infty} \mathbb{E} \big[ \xi^h(x) \big] \mu^h(\mathrm{d}x), \]
where $\mu^h$ is the Lévy measure of $S^h$ that is computed in \eqref{muh}, and $\xi^h(x)$ has distribution $\nu_x$. By derivating \eqref{Laplacenu} at $\beta=0$, one can compute $\mathbb{E} \big[ \xi^h(x) \big]=\frac{x^2}{2x+1}$. Since $\mu^h$ integrates $x^2$ near $0$ and has exponential tail, the last display is finite. By the law of large numbers, we get
\[ \frac{L_t}{t} \xrightarrow[t \to +\infty]{a.s.} c, \]
where $c=\mathbb{E} \big[ L_1 \big]$. By absolute continuity on $[1, +\infty[$, the same holds for $L^+$ instead of $L$.

In order to complete the proof of Corollary \ref{asymptoticspv}, we only need to find the distribution of $\mathcal{E}$ and the constant $c$. We will do this with Laplace transforms. We know that, for all $\lambda, \mu \geq 0$, we have
\[ \mathbb{E} \big[ e^{-\lambda \mathcal{E}-c\mu \mathcal{E} } \big] = \lim_{r \to +\infty} \mathbb{E} \Big[ \exp \big( -\lambda e^{-2\sqrt{2} r} P_r^h - e^{-2 \sqrt{2} r} V_r^h \big) \Big].\]
But we can compute these Laplace transforms thanks to Lemma \ref{biasedtwoprocesses} and Proposition 1.2 and 1.4 of \cite{CLGHull}:
\begin{eqnarray*}
\mathbb{E} \big[ e^{- \lambda P_r^h - \mu V_r^h} \big] &=& \mathbb{E} \big[ e^{-\lambda P_r} e^{- \mu V_r} e^{P_r} e^{-2V_r} \int_0^1 e^{-3x^2P_r} \mathrm{d}x\big]\\
&=& \int_0^1 \mathbb{E} \Big[ e^{(1-\lambda-3x^2)P_r} \mathbb{E} \big[ e^{-(\mu+2)V_r} | P_r \big]\Big]\mathrm{d}x\\
&=& \Big(1+\frac{2 \lambda -2 +\sqrt{2 \mu+4}}{3 \sqrt{2 \mu+4}} \sinh^{2}{((2\mu+4)^{1/4}r)} \Big)^{-1}\\
& & \times \Big(1+\frac{2 \lambda +4 -2\sqrt{2 \mu+4}}{3 \sqrt{2 \mu+4}} \tanh^{2}{((2\mu+4)^{1/4}r)} \Big)^{-1/2}.
\end{eqnarray*}
This gives the value of $\mathbb{E} \Big[ \exp{ \Big(- e^{-2\sqrt{2}r} \lambda P_r^h -e^{-2\sqrt{2}r}\mu V_r^h\Big)} \Big]$. When we let $r$ go to $+\infty$, the second factor goes to $1$. We also have
\[\frac{2 \lambda e^{-2\sqrt{2}r} -2 +\sqrt{2 \mu e^{-2\sqrt{2}r} +4}}{3\sqrt{2 \mu e^{-2\sqrt{2}r} +4}} \sim \frac{\big(2 \lambda +\mu/2 \big) e^{-2\sqrt{2}r}}{6}\]
and $\sinh^{2}{\big( (2\mu e^{-2\sqrt{2}r}+4)^{1/4}r \big)} \sim \Big( \frac{e^{\sqrt{2}r}}{2} \Big)^2$, so the first factor goes to $\Big( 1+\frac{\lambda}{12}+\frac{\mu}{48} \Big)^{-1}$. This is the Laplace transform of the couple $\Big( X, \frac{1}{4}X\Big)$, where $X$ is an exponential variable of parameter $12$. This ends the proof.
\end{proof}

\appendix

\section{Proof of Proposition \ref{GHP}}

The goal of this appendix is to prove Proposition \ref{GHP}. We note that similar ideas to those below appear in Section 2 of \cite{MS15}, and more precisely in the proofs of Proposition 2.17 and 2.18. In particular, Lemma \ref{inclusion} is essentially proved in the proof of Proposition 2.18 there. If $A$ is a subset of a metric space $X$ and $\eps>0$, we will write $A^{\eps}$ for the union of all the open balls of radius $\eps$ centered at an element of $A$. Note that this is an open subset of $X$. We also recall that $B_r(X)$ and $\overline{B_r}(X)$ are respectively the closed ball and the hull centered at the root of $X$. In particular, they are both closed subsets of $X$.
To prove Proposition \ref{GHP}, we will also need several times the following definition.

\begin{defn}
Let $(X,d)$ be a metric space, $x,y \in X$ and $\eps>0$. An \textit{$\eps$-chain from $x$ to $y$} is a finite sequence of points $(z_i)_{0 \leq i \leq k}$ of $X$ such that $z_0=x$, $z_k=y$ and $d(z_i,z_{i+1}) \leq \eps$ for all $0 \leq i \leq k-1$.
\end{defn}

\begin{lem}\label{Hausdorff}
Under the assumptions of Proposition \ref{GHP}, the application $s \to \overline{B_s}(X)$ is continuous at $r$ for the Hausdorff distance on the set of the compact subsets of $\overline{B_{r+1}}(X)$.
\end{lem}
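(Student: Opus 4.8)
The plan is to prove the stronger statement that the three compact sets
\[
\overline{B_r}(X),\qquad \overline{B_{r+}}(X):=\bigcap_{s>r}\overline{B_s}(X),\qquad \overline{B_{r-}}(X):=\overline{\bigcup_{s<r}\overline{B_s}(X)}
\]
all coincide, where the last closure is taken in $X$. This is enough to conclude: one checks directly from the definition of the hull that $s\mapsto\overline{B_s}(X)$ is nondecreasing (the ball is nondecreasing in $s$, and any bounded component of $X\setminus B_s(X)$ is contained in $\overline{B_{s'}}(X)$ for every $s'\ge s$), so for $s$ close to $r$ the sets $\overline{B_s}(X)$ are compact subsets of $\overline{B_{r+1}}(X)$; and a nondecreasing (resp. nonincreasing) family of compact subsets of a fixed compact set converges in the Hausdorff distance to the closure of its union (resp. to its intersection). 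Hence once the three sets above agree, $\overline{B_s}(X)\to\overline{B_r}(X)$ both as $s\uparrow r$ and as $s\downarrow r$.

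The inclusions $\overline{B_{r-}}(X)\subseteq\overline{B_r}(X)\subseteq\overline{B_{r+}}(X)$ are immediate from monotonicity and the fact (recalled in the appendix) that $\overline{B_r}(X)$ is closed. The reverse inclusions will come from assumptions (ii) and (iii). First, since $\mu$ is finite on the compact set $\overline{B_{r+1}}(X)$, continuity of $\mu$ along monotone sequences gives $\mu\big(\bigcup_{s<r}\overline{B_s}(X)\big)=V(r^-)$ and $\mu\big(\overline{B_{r+}}(X)\big)=V(r^+)$, and by (iii) both equal $V(r)=\mu\big(\overline{B_r}(X)\big)$; combined with $\bigcup_{s<r}\overline{B_s}(X)\subseteq\overline{B_{r-}}(X)\subseteq\overline{B_r}(X)\subseteq\overline{B_{r+}}(X)$ this forces
\[
\mu\big(\overline{B_r}(X)\setminus\overline{B_{r-}}(X)\big)=\mu\big(\overline{B_{r+}}(X)\setminus\overline{B_r}(X)\big)=0 .
\]
Two further facts from the length-space assumption will be used: the closed ball $B_r(X)$ is the closure of the open ball of radius $r$ around the root (so $B_r(X)\subseteq\overline{B_{r-}}(X)$), and $X$ is locally connected (small metric balls in a length space are path-connected), so every connected component of an open subset of $X$ is open.

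It remains to upgrade these two $\mu$-null sets to empty sets, which is where (ii) enters. Suppose $z\in\overline{B_{r+}}(X)\setminus\overline{B_r}(X)$. Then $z\notin B_r(X)$, i.e.\ $z$ is at distance $>r$ from the root, so we may pick $s>r$ with $z\notin B_s(X)$; then $z\in\overline{B_s}(X)\setminus B_s(X)$, so $z$ lies in a bounded, open component $C$ of $X\setminus B_s(X)$. A short chase of the definition of the hull shows $C\subseteq\overline{B_{s'}}(X)$ for every $s'>r$: for $s'\ge s$ this is monotonicity, and for $r<s'<s$ the component of $X\setminus B_{s'}(X)$ containing the connected set $C$ must be bounded — an unbounded one would be disjoint from $\overline{B_{s'}}(X)\supseteq\overline{B_{r+}}(X)\ni z$ — hence contained in $\overline{B_{s'}}(X)$; similarly $C\cap\overline{B_r}(X)=\emptyset$, since the component of $X\setminus B_r(X)$ containing $C$ is unbounded (a bounded one would put $z$ in $\overline{B_r}(X)$) and so disjoint from $\overline{B_r}(X)$. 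Thus $C$ is a non-empty open set contained in $\overline{B_{r+}}(X)\setminus\overline{B_r}(X)$, so $\mu(C)>0$ by (ii), contradicting the displayed identity. The case $z\in\overline{B_r}(X)\setminus\overline{B_{r-}}(X)$ is entirely parallel: here $z\notin B_r(X)$ (because $B_r(X)\subseteq\overline{B_{r-}}(X)$), so $z$ lies in a bounded open component $C$ of $X\setminus B_r(X)$; for each $s<r$ the component of $X\setminus B_s(X)$ containing $C$ is unbounded (a bounded one would put $z$ in $\overline{B_{r-}}(X)$), whence $C$ is disjoint from every $\overline{B_s}(X)$, hence from $\bigcup_{s<r}\overline{B_s}(X)$ and, being open, from its closure $\overline{B_{r-}}(X)$; so $C\subseteq\overline{B_r}(X)\setminus\overline{B_{r-}}(X)$ has positive measure, a contradiction. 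The crux of the argument is this last mechanism — trapping, inside any candidate "extra" point of a hull, a whole open component of the complement of some ball within a $\mu$-null set; the rest is bookkeeping with the definition of the hull and with monotone limits of compact sets and of $\mu$.
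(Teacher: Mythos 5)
Your proof is correct, and it is genuinely a different route from the paper's, at least for one of the two halves. The paper splits the statement into an upper-semicontinuity part ($\overline{B_{r+\eps}}(X)\subset\overline{B_r}(X)^\delta$) and a lower-semicontinuity part ($\overline{B_r}(X)\subset\overline{B_{r-\eps}}(X)^\delta$), and handles them asymmetrically. For the upper part it uses a purely topological argument: if $x\in\bigcap_{\eps>0}\overline{B_{r+\eps}}(X)\setminus\overline{B_r}(X)$, then a path from $x$ to a far-away point staying in the open unbounded component of $X\setminus B_r(X)$ would have to enter $B_{r+\eps}(X)$ for every $\eps>0$, and a compactness argument on the path yields a contradiction — this uses only assumption (i), and the paper remarks explicitly that (ii) and (iii) are not needed there. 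For the lower part the paper uses essentially the same "open $\mu$-null set must be empty" mechanism that you use. Your proof, by contrast, treats both halves symmetrically through the identity $\overline{B_{r-}}(X)=\overline{B_r}(X)=\overline{B_{r+}}(X)$, invoking (ii) and (iii) on both sides and then packaging the conclusion via the standard fact that monotone families of compacta converge in Hausdorff distance. What the paper's asymmetric treatment buys is a slightly sharper result on the upper-semicontinuity side (no measure hypothesis needed); what your symmetrization buys is a cleaner, more self-contained argument that avoids the path-chasing. Both rely on the same non-trivial auxiliary facts about length spaces — local connectedness (components of open sets are open) and $B_r(X)=\overline{B^o(\rho,r)}$ — which you correctly identify and use. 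The only place worth being slightly careful is that you implicitly invoke the one-ended length-space structure when asserting that a component is "bounded" precisely when its closure is in the hull; this matches the paper's definitions, so there is no gap.
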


\begin{proof}
Let $\delta>0$. It is enough to show that, for some $\eps>0$, we have:
\begin{itemize}
\item[a)]
$\overline{B_{r+\eps}}(X) \subset \overline{B_r}(X)^{\delta}$,
\item[b)]
$\overline{B_r}(X) \subset \overline{B_{r-\eps}}(X)^{\delta}$.
\end{itemize}

We start with the first point. Let $A=\Big( \bigcap_{\eps>0} \overline{B_{r+\eps}}(X) \Big) \backslash \overline{B_r}(X)$. If $x \in A$, then there is a geodesic from $x$ to a point $y \in X \backslash \overline{B_{r+1}}(X)$ that stays outside $B_r(X)$. However, $\gamma$ has to intersect $B_{r+\eps}(X)$ for all $\eps>0$. This is clearly impossible, so $A=\emptyset$. In particular, the decreasing intersection of the compact sets $\overline{B_{r+1/n}}(X) \backslash \overline{B_r}(X)^{\delta}$ is empty, so one of these compact sets is empty, which proves item a).

For item b), let $A'=\big( \overline{B_r}(X)\backslash B_r(X) \big) \backslash \Big( \bigcup_{\eps>0} \overline{B_{r-\eps}}(X) \Big)$. By assumption $(iii)$ we have $\mu(A')=\emptyset$ and $A'$ is open, so by assumption $(ii)$ we get $A'=\emptyset$. This implies $ \overline{B_r}(X)\backslash B_r(X) \subset \bigcup_{n \geq 0} \overline{B_{r-1/n}}(X)^{\delta}$. Moreover, we have $B_r(X) \subset \overline{B_{r-1/n}}(X)^{\delta}$ for $\frac{1}{n} < \delta$. Hence, the increasing family of open sets $\overline{B_{r-1/n}}(X)^{\delta}$ covers the compact space $\overline{B_r}(X)$, so there is an $\eps>0$ such that $\overline{B_r}(X) \subset \overline{B_{r-\eps}}(X)^{\delta}$.
\end{proof}

\begin{rem}
Note that the first point is true even without assumptions $(ii)$ and $(iii)$.
\end{rem}

\begin{lem} \label{inclusion}
Let $\eps>0$. Let $(X, \rho, a)$ and $(Y, \rho, b)$ be two bipointed connected, compact subsets of a locally compact metric space $(Z,d)$. Assume that the Hausdorff distance between $X$ and $Y$ is less than $\eps$ and that $d(a,b) \leq \eps$. Then, for all $r$ such that $r+4 \eps < d(\rho,b)$, we have
\begin{itemize}
\item[1)]
$\overline{B_r}(X)^{\eps} \cap Y \subset \overline{B_{r+4\eps}}(Y)$,
\item[2)]
$\overline{B_r}(X) \subset \overline{B_{r+4\eps}}(Y)^{\eps}$.
\end{itemize}
\end{lem}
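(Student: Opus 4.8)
The plan is to prove part~1) first and then obtain part~2) with essentially no extra work. Throughout write $d$ for the metric of $Z$; on $X$ and $Y$ it restricts to their own metrics, and (as part of the bipointed setup, cf.\ Definition~\ref{defGHP}) the second base points are $\eps$-close, $d(a,b)<\eps$. Since $d(\rho,b)>r+4\eps$ and $d(a,b)<\eps$ we also get $d(\rho,a)>r+3\eps$, so $a\notin B_r(X)$ and $b\notin B_{r+4\eps}(Y)$, and the two hulls can be described as
\[\overline{B_r}(X)=X\setminus D_a,\qquad \overline{B_{r+4\eps}}(Y)=Y\setminus C_b,\]
where $D_a$ is the connected component of $X\setminus B_r(X)$ containing $a$, and $C_b$ that of $Y\setminus B_{r+4\eps}(Y)$ containing $b$. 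In these terms part~1) is equivalent to: every $x\in X$ with $d(x,y)<\eps$ for some $y\in C_b$ lies in $D_a$. That is the statement I would prove.

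So fix such $x$ and $y$. Since $y\in C_b\subset Y\setminus B_{r+4\eps}(Y)$ we have $d(\rho,y)>r+4\eps$, hence $d(\rho,x)>r+3\eps$. Now run a chain argument: as $C_b$ is connected, for any small $\eta>0$ there is an $\eta$-chain $y=y_0,y_1,\dots,y_m=b$ inside $C_b$; choosing $x_i\in X$ with $d(x_i,y_i)<\eps$ (possible since $Y\subset X^{\eps}$), the finite sequence $x,x_0,x_1,\dots,x_m,a$ is a chain in $X$ all of whose steps have length $<2\eps+\eta$ — for the interior steps because $d(x_i,x_{i+1})\le d(x_i,y_i)+d(y_i,y_{i+1})+d(y_{i+1},x_{i+1})$, for the first step because $d(x,x_0)\le d(x,y_0)+d(y_0,x_0)<2\eps$, and for the last because $d(x_m,a)\le d(x_m,b)+d(b,a)<2\eps$. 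Moreover each $y_i$ is at distance $>r+4\eps$ from $\rho$, so each $x_i$, and also $x$ and $a$, are at distance $>r+3\eps$ from $\rho$: the whole chain lies in $X\setminus B_{r+3\eps}(X)$.

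The heart of the matter — and the step I expect to be the main obstacle — is to show that this chain cannot cross from one connected component of $X\setminus B_r(X)$ to another; granting that, the chain stays in the single component containing its endpoint $a$, which is $D_a$, so $x\in D_a$ and part~1) follows. Suppose two consecutive chain points $x_k,x_{k+1}$ lay in distinct components. Here I would use that $X$ is a length space (which holds for the spaces of Proposition~\ref{GHP}; in the application $X$ is a large ball of a length space, so that short paths between points near the root stay inside $X$): then $x_k$ and $x_{k+1}$, being at distance $<2\eps+\eta$, are joined inside $X$ by a path of length $<3\eps$. Such a path, connecting points of distinct components of $X\setminus B_r(X)$, must meet $B_r(X)$ at some $w$ with $d(\rho,w)\le r$, whence $d(\rho,x_k)\le d(\rho,w)+d(w,x_k)<r+3\eps$, contradicting $d(\rho,x_k)>r+3\eps$. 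This is exactly where the $4\eps$ of slack is spent: with only $3\eps$ the chain would be forced merely outside $B_{r+2\eps}(X)$, which is not enough to beat the length $\approx 3\eps$ of the connecting paths, and it is the only point where one uses more about $X$ than compactness and connectedness. Making this "no-jump" argument fully rigorous (choosing the connecting paths, and controlling that they do not stray too far from the root) is the delicate part.

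Finally, part~2) is immediate from part~1): given $x\in\overline{B_r}(X)\subset X$, choose $y\in Y$ with $d(x,y)<\eps$; then $y\in\overline{B_r}(X)^{\eps}\cap Y\subset\overline{B_{r+4\eps}}(Y)$ by part~1), hence $x\in\overline{B_{r+4\eps}}(Y)^{\eps}$.
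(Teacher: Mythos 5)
Your argument is essentially the paper's, read in the contrapositive: both lift a chain from $y$ to $b$ in $Y$ to a coarser chain in $X$ joining a point of $\overline{B_r}(X)$ to $a$, and then argue that this lifted chain must pass within distance roughly $3\eps$ of $B_r(X)$. The step you single out as the crux --- that the chain cannot jump between components of $X\setminus B_r(X)$ without approaching $B_r(X)$ --- is precisely what the paper asserts without further justification in the sentence ``Since $x \in \overline{B_r}(X)$ there must be at least one $i$ such that $d(\rho,w_i) \leq r+3\eps$.''

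You are right to be suspicious of that step: for a bare compact connected subset of $Z$ the claim, and the lemma with it, is actually false. For instance, in $Z=\R^2$ take $X=Y$ to be the union of the circle $C$ of radius $15$ about $\rho$, the radius from $\rho$ to $a=b=(15,0)$, a segment from $\rho$ to $(-14.5,0)$, and a short arc of the circle of radius $14.5$ around $(-14.5,0)$; with $\eps=1$, $r=10$, that arc sits in a bounded component of $X\setminus B_{10}(X)$, so the point $(-15,0)\in C$ belongs to $\overline{B_{10}}(X)^{\eps}\cap Y$, yet it is joined to $a$ along $C$ entirely outside $B_{14}$, so $(-15,0)\notin\overline{B_{14}}(Y)$. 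The lemma is only ever applied, in the proof of Proposition~\ref{GHP}, to balls of length spaces (assumption~(i) there), which is exactly the extra hypothesis you supply, and your ``no-jump'' argument is the correct way to close the gap: a hop of length $<3\eps$ between distinct components of $X\setminus B_r(X)$ is traversed inside $X$ by a path of length $<3\eps$, which must meet $B_r(X)$ and therefore forces both endpoints within distance $r+3\eps$ of $\rho$. Your residual worry --- keeping that connecting path inside $X$ when $X$ is only the ball $B_{R+3}$ of an ambient length space --- is easily dispatched, since the relevant chain points lie in $B_{R+1}$, so the path cannot leave $B_{R+1+3\eps}\subset B_{R+3}$ once $\eps$ is small.
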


\begin{figure}
\begin{center}
\begin{tikzpicture}
\draw[red, thick] (0,0) to[out=135, in=270] (-2,2.5);
\draw[red, thick] (-2,2.5) to[out=90, in=180] (-0.5,4);
\draw[red, thick] (-0.5,4) to[out=0, in=180] (0.5,2);
\draw[red, thick] (0.5,2) to[out=0, in=180] (1.5,5);
\draw[red, thick] (1.5,5) to[out=0, in=30] (0,0);

\draw[blue, thick] (0,0) to[out=150, in=270] (-1.7,2.5);
\draw[blue, thick] (-1.7,2.5) to[out=90, in=180] (-0.5,3.5);
\draw[blue, thick] (-0.5,3.5) to[out=0, in=180] (0,2.3);
\draw[blue, thick] (0,2.3) to[out=0, in=180] (1.5,5.5);
\draw[blue, thick] (1.5,5.5) to[out=0, in=20] (0,0);

\draw[red] (2, 1.5) node[texte]{$Y$};
\draw[blue] (2.8, 3.5) node[texte]{$X$};
\draw (0, 0) node{};
\draw (0.2, -0.2) node[texte]{$\rho$};

\draw (1.5, 5.2) node[bleu]{};
\draw [blue] (1.8,5.2) node[texte]{$a$};
\draw (1.7, 4.8) node[rouge]{};
\draw [red] (2,4.6) node[texte]{$b$};

\draw (-0.6, 3.7) node[rouge]{};
\draw [red] (-0.9,3.7) node[texte]{$y$};
\draw (-0.8, 3.3) node[bleu]{};
\draw [blue] (-1,3.1) node[texte]{$x$};

\draw (-0.1,2.6) node[petitrouge]{};
\draw [red] (-0.5,2.6) node[texte]{$z_1$};
\draw (1,2) node[petitrouge]{};
\draw [red] (1,1.7) node[texte]{$z_2$};
\draw (1.6,3.5) node[petitrouge]{};
\draw [red] (1.9,3.4) node[texte]{$z_3$};

\draw (-0.3,2.2) node[petitbleu]{};
\draw [blue] (-0.5,1.9) node[texte]{$w_1$};
\draw (0.8,2.5) node[petitbleu]{};
\draw [blue] (1.3,2.5) node[texte]{$w_2$};
\draw (1.5,3.8) node[petitbleu]{};
\draw [blue] (1.8,3.9) node[texte]{$w_3$};

\draw[dashed, thick] (-3,2.9)--(3.5,2.9);
\draw (3.3,2.6) node[texte]{$r+4\eps$};
\end{tikzpicture}
\end{center}
\caption{Illustration of the proof of Lemma \ref{inclusion}} \label{chains}
\end{figure}
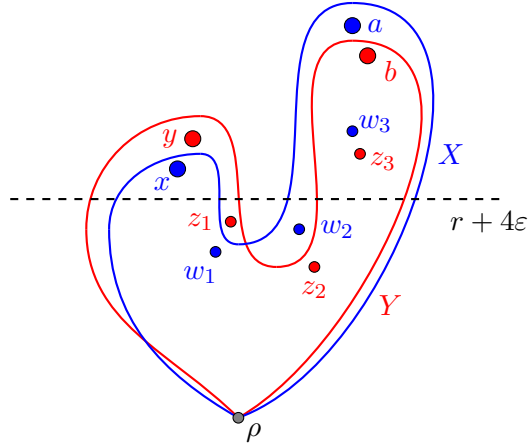

\begin{proof}
We first notice that the connectedness of $X$ implies that, for any two points $x$ and $x'$ in $X$, there is an $\eps$-chain in $X$ from $x$ to $x'$. The same is true for $Y$.

Let $y$ be a point in $\overline{B_r}(X)^{\eps} \cap Y$. We want to show that $y \in \overline{B_{r+4\eps}}(Y)$. We can assume $d(\rho, y)>r+4\eps$ (if it is not the case, then $y \in \overline{B_{r+4\eps}}(Y)$ is obvious). Let $(z_i)_{0 \leq i \leq k}$ be an $\eps$-chain from $y$ to $b$ in $Y$. We know that $d(a,b) \leq \eps$ and that there is a point $x \in X$ such that $d(x,y)\leq \eps$. We write $w_0=x$, $w_k=a$ and, for all $1 \leq i \leq k-1$, we take $w_i \in X$ such that $d(w_i,z_i) \leq \eps$ (see Figure \ref{chains} for an illustration). For all $i$, we have $d(w_i,w_{i+1}) \leq d(w_i,z_i)+d(z_i,z_{i+1})+d(z_{i+1},w_{i+1}) \leq 3 \eps$, so $(w_i)$ is a $3\eps$-chain from $x$ to $a$ in $X$. Since $x \in \overline{B_r}(X)$, there must be at least one $i$ such that $d(\rho,w_i) \leq r+3\eps$, which implies $d(\rho,z_i) \leq r+4\eps$. Hence, every $\eps$-chain from $y$ to $b$ has at least one point at distance from the root less than $r+4\eps$, so there is no $\eps$-chain from $y$ to $b$ in $Y \backslash B_{r+4 \eps}(Y)$. This implies that $y$ and $b$ do not lie in the same connected component of $Y \backslash B_{r+4 \eps}(Y)$, so $y \in \overline{B_{r+4\eps}}(Y)$, which proves the first point of the lemma.

The second point is easily obtained from the first one: if $x \in \overline{B_r}(X)$, then there is a $y \in Y$ such that $d(x,y) \leq \eps$. By the first point, we have $y \in \overline{B_{r+4\eps}}(Y)$, so $x \in  \overline{B_{r+4\eps}}(Y)^{\eps}$.
\end{proof}

\begin{proof}[Proof of Proposition \ref{GHP}]
First, we need to prove that the radii of the $\overline{B_r}(X_n)$ are bounded. Let $R=\max \{ d(\rho,x) | x \in \overline{B_{r+4}}(X)\}$ be the radius of $\overline{B_{r+4}}(X)$. For $n$ large enough, we have
\[d_{GH}(B_{R+3}(X_n),B_{R+3}(X))<1.\]
We write $X'=B_{R+3}(X)$ and $X'_n=B_{R+3}(X_n)$. The above inequality means that we can embed $X'$ and $X'_n$ isometrically in the same space $(Z,d)$ in such a way that $X'_n \subset (X')^1$ and $\rho_n=\rho$.

Let $b \in X'_n \backslash B_{R+2}(X_n)$ and $a \in X'$ such that $d(a,b) \leq 1$. We have $d(a,\rho) \geq R+1$, so $a \notin \overline{B_r}(X)$ and $\overline{B_r}(X)=\overline{B_r}(X')$ is the hull of radius $r$ with center $\rho$ with respect to $a$ in $X'$. By item 2) of Lemma \ref{inclusion} for $\eps=1$, we have $\overline{B_r}(X'_n) \subset \overline{B_{r+4}}(X')^1$. In particular, the radius of $\overline{B_r}(X'_n)$ is less than $R+1$. This means that only one connected component of $X' \backslash B_r(X'_n)$ contains points at distance greater than $R+1$ from the root. In other words, the radius of $\overline{B_r}(X_n)$ is at most $R+1$.

We now move on to the proof of our proposition. Let $\delta>0$. By Lemma \ref{Hausdorff}, there is $\eps>0$ such that $\overline{B_{r+4 \eps}}(X) \subset \overline{B_r}(X)^{\delta}$ and $\overline{B_r}(X) \subset \overline{B_{r-4 \eps}}(X)^{\delta}$. For $n$ large enough, we have $d_{GHP}(X',X'_n) \leq \eps$. This means that we can embed $X'$ and $X'_n$ in the same space $Z$ in such a way that
\begin{itemize}
\item[a)]
$\rho=\rho_n$,
\item[b)]
$X'_n \subset (X')^{\eps}$,
\item[c)]
$X' \subset (X'_n)^{\eps}$,
\item[d)]
for every $A \subset X'_n$ that is measurable we have $\mu_n(A) \leq \mu(A^{\eps})+\eps$,
\item[e)]
for every $B \subset X'$ that is measurable we have $\mu(B) \leq \mu_n(B^{\eps})+\eps$.
\end{itemize}
This embedding provides a natural way to embed the measured metric spaces $\overline{B_r}(X_n)$ and $\overline{B_r}(X)$ in $Z$. We will deduce an upper bound for the GHP distance between these two hulls.

For all $y \in \overline{B_r}(X_n)$ there is an $x \in X'$ such that $d(x,y) \leq \eps$. By Lemma \ref{inclusion} we have $x \in \overline{B_{r+4\eps}}(X)$, so by Lemma \ref{Hausdorff} and our choice of $\eps$, we have $x \in \overline{B_r}(X)^{\delta}$. This proves $\overline{B_r}(X_n) \subset \overline{B_r}(X)^{\delta+\eps}$.

Similarly, let $x \in \overline{B_r}(X)$. We have $x \in \overline{B_{r-4\eps}}(X)^{\delta}$ by Lemma \ref{Hausdorff} and our choice of $\eps$. Let $z \in \overline{B_{r-4\eps}}(X)$ be such that $d(x,z) \leq \delta$. There is a $y \in X'_n$ such that $d(y,z) \leq \eps$. By Lemma \ref{inclusion} we have $y \in \overline{B_{r-4\eps+4\eps}}(X_n)$ and $d(x,y) \leq \delta+\eps$, which proves $\overline{B_r}(X) \subset \overline{B_r}(X_n)^{\delta+\eps}$. Hence, in our embedding, the Hausdorff distance between $\overline{B_r}(X)$ and $\overline{B_r}(X_n)$ is less than $\eps+\delta$.

For all $A \subset \overline{B_r}(X_n)$ measurable, we have $\mu_n(A) \leq \mu (A^{\eps}) +\eps=\mu \big( A^{\eps} \cap X' \big) +\eps$. By Lemma \ref{inclusion} we have the inclusion $A^{\eps} \cap X' \subset A^{\eps} \cap \overline{B_{r+4\eps}}(X)$, so we get
\begin{eqnarray*}
\mu_n(A) & \leq & \eps+\mu \big( A^{\eps} \cap \overline{B_{r+4\eps}}(X) \big)\\
& \leq & \eps+ \mu \big( A^{\eps} \cap \overline{B_r}(X) \big) + V(r+4\eps)-V(r),
\end{eqnarray*}
where we recall that $V(s)=\mu(\overline{B_s}(X))$ for all $s$.

Similarly, for all $B \subset \overline{B_r}(X)$ measurable, we have
\begin{eqnarray*}
\mu(B) & \leq & \mu \big( B \cap \overline{B_{r-4\eps}}(X) \big) + V(r) - V(r-4\eps)\\
& \leq & \eps + \mu_n \big( (B \cap \overline{B_{r-4\eps}}(X))^{\eps} \cap X'_n \big)+V(r) - V(r-4\eps)\\
& \leq & \eps + \mu_n \big( B^{\eps} \cap \overline{B_{r-4\eps}}(X)^{\eps} \cap X'_n \big)+V(r) - V(r-4\eps)\\
& \leq & \eps + \mu_n \big( B^{\eps} \cap \overline{B_r}(X_n) \big) + V(r) - V(r-4\eps),
\end{eqnarray*}
where the last inequality uses Lemma \ref{inclusion}.

Hence, our embedding of $\overline{B_r}(X)$ and $\overline{B_r}(X_n)$ gives the following bound for $n$ large enough:
\[d_{GHP} \big( \overline{B_r}(X), \overline{B_r}(X_n) \big) \leq \max \Big( \eps+\delta, \eps+V(r+4\eps)-V(r), \eps+ V(r) - V(r-4\eps) \Big).\]

By assumption $(iii)$ in the statement of the proposition, the right-hand side can be made arbitrarily small, which proves the first point of Proposition \ref{GHP}. The second point is an obvious consequence of the first one.

Finally, in the case of compact, bipointed metric spaces  $\big( (X_n, d_n), x_n,y_n, \mu_n \big)$ and $\big( (X, d), x,y, \mu \big)$ with $r <d(x,y)$, the above proof still works. The first part of the proof (i.e. proving that the radii of the $\big( \overline{B_r}(X_n) \big)_{n \geq 0}$ are bounded) is not necessary anymore. We may apply the second part of the proof directly to $X$ and $X_n$ instead of the compact subsets $X'$ and $X'_n$. Note that if $r>d(x,y)$, then $\overline{B_r}(X)=X$ and $\overline{B_r}(X_n)=X_n$ for $n$ large enough, so the conclusion is immediate.
\end{proof}
\bibliographystyle{abbrv}
\bibliography{bibli}

\end{document}